\newtheorem{theorem}{Theorem}[section]
\newtheorem{proposition}[theorem]{Proposition}
\newtheorem{lemma}[theorem]{Lemma}
\theoremstyle{definition}
\newtheorem{definition}{Definition}[section]
\theoremstyle{remark}
\newtheorem{remark}{Remark}[section]
\def\supp{\mathop {\rm Supp}}
\def\Z{\ensuremath{\mathbb Z}}
\newcommand{\rankonecase}[1]{\smallbreak$\mathbf{#1}$}
\newcounter{primitivenumber}
\newcommand{\primitivetype}[1]{\setcounter{primitivenumber}{\theenumi}\end{enumerate}\subsubsection*{#1}\begin{enumerate}\setcounter{enumi}{\theprimitivenumber}}
\newsavebox{\aprime}
\newsavebox{\GreyCircle}
\newsavebox{\segm}
\newsavebox{\susp}
\newsavebox{\shortsusp}
\newsavebox{\bifurc}
\newsavebox{\longbifurc}
\newsavebox{\dthree}
\newsavebox{\dm}
\newsavebox{\shortdm}
\newsavebox{\atwo}
\newsavebox{\longam}
\newsavebox{\mediumam}
\newsavebox{\shortam}
\newsavebox{\plusaoneaone}
\newsavebox{\plusdm}
\newsavebox{\vsegm}
\newsavebox{\rightbisegm}
\newsavebox{\bsecondtwo}
\newsavebox{\shortbprimem}
\newsavebox{\btwo}
\newsavebox{\shortbm}
\newsavebox{\leftbisegm}
\newsavebox{\shortcm}
\newsavebox{\cprimetwo}
\newsavebox{\shortbsecondm}
\newsavebox{\shortcsecondm}
\newsavebox{\bthirdthree}
\newsavebox{\ffour}
\newsavebox{\lefttrisegm}
\newsavebox{\gtwo}
\newsavebox{\gprimetwo}
\newsavebox{\gsecondtwo}
\newsavebox{\GreyCircleTwo}
\newsavebox{\plusbm}
\newsavebox{\plusbprimem}
\newsavebox{\pluscsecondm}
\savebox{\aprime}{\begin{picture}(600,900)\put(300,900){\circle*{150}}\put(300,300){\circle{600}}\end{picture}}
\savebox{\segm}{\begin{picture}(1800,0)\multiput(0,0)(1800,0){2}{\circle*{150}}\thicklines\put(0,0){\line(1,0){1800}}\end{picture}}
\savebox{\susp}{\begin{picture}(3600,0)\multiput(0,0)(3600,0){2}{\circle*{150}}\thicklines\multiput(0,0)(2500,0){2}{\line(1,0){1100}}\multiput(1300,0)(400,0){3}{\line(1,0){200}}\end{picture}}
\savebox{\shortsusp}{\begin{picture}(1800,0)\multiput(0,0)(1800,0){2}{\circle*{150}}\thicklines\multiput(0,0)(400,0){5}{\line(1,0){200}}\end{picture}}
\savebox{\bifurc}{\begin{picture}(1200,2400)\multiput(1200,0)(0,2400){2}{\circle*{150}}\thicklines\put(0,1200){\line(1,1){1200}}\put(0,1200){\line(1,-1){1200}}\end{picture}}
\savebox{\longbifurc}{\begin{picture}(1800,3600)\multiput(1800,0)(0,3600){2}{\circle*{150}}\thicklines\put(0,1800){\line(1,1){1800}}\put(0,1800){\line(1,-1){1800}}\end{picture}}
\savebox{\dthree}{\begin{picture}(3600,600)\put(1500,0){\usebox{\GreyCircle}}\multiput(0,300)(1800,0){2}{\usebox{\segm}}\end{picture}}
\savebox{\dm}{\begin{picture}(8700,2400)\put(0,900){\usebox{\GreyCircle}}\multiput(300,1200)(5400,0){2}{\usebox{\segm}}\put(2100,1200){\usebox{\susp}}\put(7500,0){\usebox{\bifurc}}\end{picture}}
\savebox{\shortdm}{\begin{picture}(6900,2400)\put(0,900){\usebox{\GreyCircle}}\multiput(300,1200)(3600,0){2}{\usebox{\segm}}\put(2100,1200){\usebox{\shortsusp}}\put(5700,0){\usebox{\bifurc}}\end{picture}}
\savebox{\atwo}{\begin{picture}(2400,600)\put(300,300){\usebox{\segm}}\multiput(300,300)(1800,0){2}{\circle{600}}\multiput(300,300)(25,25){13}{\circle*{70}}\multiput(600,600)(300,0){4}{\multiput(0,0)(25,-25){7}{\circle*{70}}}\multiput(750,450)(300,0){4}{\multiput(0,0)(25,25){7}{\circle*{70}}}\multiput(2100,300)(-25,25){13}{\circle*{70}}\end{picture}}
\savebox{\longam}{\begin{picture}(7800,600)\put(300,300){\circle*{150}}\put(2100,300){\circle*{150}}\put(5700,300){\circle*{150}}\put(7500,300){\circle*{150}}\put(300,300){\circle{600}}\put(7500,300){\circle{600}}\multiput(300,300)(25,25){13}{\circle*{70}}\multiput(600,600)(3900,0){2}{\multiput(0,0)(300,0){9}{\multiput(0,0)(25,-25){7}{\circle*{70}}}\multiput(150,-150)(300,0){9}{\multiput(0,0)(25,25){7}{\circle*{70}}}}\multiput(7500,300)(-25,25){13}{\circle*{70}}\thicklines\put(300,300){\line( 1, 0){2250}}\put(7500,300){\line(-1, 0){2250}}\multiput(2850,300)(600,0){4}{\line( 1, 0){300}}\end{picture}}
\savebox{\mediumam}{\begin{picture}(6000,600)\multiput(300,300)(3600,0){2}{\usebox{\segm}}\put(2100,300){\usebox{\shortsusp}}\multiput(300,300)(1800,0){4}{\circle*{150}}\multiput(300,300)(5400,0){2}{\circle{600}}\multiput(300,300)(25,25){13}{\circle*{70}}\multiput(600,600)(3000,0){2}{\multiput(0,0)(300,0){6}{\multiput(0,0)(25,-25){7}{\circle*{70}}}\multiput(150,-150)(300,0){6}{\multiput(0,0)(25,25){7}{\circle*{70}}}}\multiput(5700,300)(-25,25){13}{\circle*{70}}\multiput(2700,600)(300,0){3}{\circle*{70}}\end{picture}}
\savebox{\shortam}{\begin{picture}(4200,600)\put(300,300){\usebox{\susp}}\multiput(300,300)(3600,0){2}{\circle{600}}\multiput(300,300)(25,25){13}{\circle*{70}}\multiput(600,600)(2100,0){2}{\multiput(0,0)(300,0){3}{\multiput(0,0)(25,-25){7}{\circle*{70}}}\multiput(150,-150)(300,0){3}{\multiput(0,0)(25,25){7}{\circle*{70}}}}\multiput(3900,300)(-25,25){13}{\circle*{70}}\multiput(1800,600)(300,0){3}{\circle*{70}}\end{picture}}
\savebox{\plusaoneaone}{\begin{picture}(1950,3000)\put(0,300){\usebox{\bifurc}}\multiput(1200,300)(0,2400){2}{\circle{600}}\multiput(1500,300)(0,2400){2}{\line(1,0){450}}\put(1950,300){\line(0,1){2400}}\end{picture}}
\savebox{\plusdm}{\begin{picture}(6600,2400)\put(0,1200){\usebox{\segm}}\put(1800,1200){\usebox{\susp}}\put(5400,0){\usebox{\bifurc}}\put(1500,900){\usebox{\GreyCircle}}\end{picture}}
\savebox{\vsegm}{\begin{picture}(0,1800)\multiput(0,0)(0,1800){2}{\circle*{150}}\thicklines\put(0,0){\line(0,1){1800}}\end{picture}}
\savebox{\GreyCircleTwo}{\begin{picture}(600,1200)(300,300)\put(600,600){\circle{600}}\put(30,25){\multiput(500,350)(150,0){2}{\circle*{70}}\multiput(425,425)(150,0){3}{\circle*{70}}\multiput(350,500)(150,0){4}{\circle*{70}}\multiput(425,575)(150,0){3}{\circle*{70}}\multiput(350,650)(150,0){4}{\circle*{70}}\multiput(425,725)(150,0){3}{\circle*{70}}\multiput(500,800)(150,0){2}{\circle*{70}}}\put(420,1020){\tiny2}\end{picture}}
\savebox{\rightbisegm}{\begin{picture}(1800,0)\multiput(0,0)(1800,0){2}{\circle*{150}}\thicklines\multiput(0,-60)(0,150){2}{\line(1,0){1800}}\multiput(1050,0)(-25,25){10}{\circle*{50}}\multiput(1050,0)(-25,-25){10}{\circle*{50}}\end{picture}}
\savebox{\bsecondtwo}{\begin{picture}(2400,600)\put(300,300){\usebox{\rightbisegm}}\put(0,0){\usebox{\GreyCircle}}\put(2100,300){\circle{600}}\end{picture}}
\savebox{\shortbprimem}{\begin{picture}(7500,1200)\multiput(300,300)(3600,0){2}{\usebox{\segm}}\put(2100,300){\usebox{\shortsusp}}\put(5700,300){\usebox{\rightbisegm}}\put(0,0){\usebox{\GreyCircleTwo}}\end{picture}}
\savebox{\btwo}{\begin{picture}(2100,600)\put(300,300){\usebox{\rightbisegm}}\put(0,0){\usebox{\GreyCircle}}\end{picture}}
\savebox{\shortbm}{\begin{picture}(7500,600)\multiput(300,300)(3600,0){2}{\usebox{\segm}}\put(2100,300){\usebox{\shortsusp}}\put(5700,300){\usebox{\rightbisegm}}\put(0,0){\usebox{\GreyCircle}}\end{picture}}
\savebox{\leftbisegm}{\begin{picture}(1800,0)\multiput(0,0)(1800,0){2}{\circle*{150}}\thicklines\multiput(0,-60)(0,150){2}{\line(1,0){1800}}\multiput(750,0)(25,25){10}{\circle*{50}}\multiput(750,0)(25,-25){10}{\circle*{50}}\end{picture}}
\savebox{\shortcm}{\begin{picture}(9000,600)\multiput(0,300)(1800,0){2}{\usebox{\segm}}\put(3600,300){\usebox{\shortsusp}}\put(5400,300){\usebox{\segm}}\put(7200,300){\usebox{\leftbisegm}}\put(1500,0){\usebox{\GreyCircle}}\end{picture}}
\savebox{\cprimetwo}{\begin{picture}(2100,1200)\put(0,300){\usebox{\leftbisegm}}\put(1500,0){\usebox{\GreyCircleTwo}}\end{picture}}
\savebox{\shortbsecondm}{\begin{picture}(7800,600)\multiput(300,300)(3600,0){2}{\usebox{\segm}}\put(2100,300){\usebox{\shortsusp}}\put(5700,300){\usebox{\rightbisegm}}\put(0,0){\usebox{\GreyCircle}}\put(7500,300){\circle{600}}\end{picture}}
\savebox{\shortcsecondm}{\begin{picture}(9300,600)\put(300,300){\circle{600}}\put(300,0){\usebox{\shortcm}}\end{picture}}
\savebox{\bthirdthree}{\begin{picture}(3900,600)\put(0,300){\usebox{\segm}}\put(1800,300){\usebox{\rightbisegm}}\put(3300,0){\usebox{\GreyCircle}}\end{picture}}
\savebox{\ffour}{\begin{picture}(5700,600)\multiput(0,300)(3600,0){2}{\usebox{\segm}}\put(1800,300){\usebox{\rightbisegm}}\put(5100,0){\usebox{\GreyCircle}}\end{picture}}
\savebox{\lefttrisegm}{\begin{picture}(1800,0)\multiput(0,0)(1800,0){2}{\circle*{150}}\thicklines\multiput(0,-120)(0,135){3}{\line(1,0){1800}}\multiput(750,0)(25,25){10}{\circle*{50}}\multiput(750,0)(25,-25){10}{\circle*{50}}\end{picture}}
\savebox{\gtwo}{\begin{picture}(2100,600)\put(300,300){\usebox{\lefttrisegm}}\put(0,0){\usebox{\GreyCircle}}\end{picture}}
\savebox{\gprimetwo}{\begin{picture}(2100,1200)\put(300,300){\usebox{\lefttrisegm}}\put(0,0){\usebox{\GreyCircleTwo}}\end{picture}}
\savebox{\gsecondtwo}{\begin{picture}(2400,600)\put(300,300){\usebox{\lefttrisegm}}\put(1800,0){\usebox{\GreyCircle}}\put(300,300){\circle{600}}\end{picture}}
\savebox{\plusbm}{\begin{picture}(9000,600)\put(0,300){\usebox{\segm}}\put(1500,0){\usebox{\shortbm}}\end{picture}}
\savebox{\plusbprimem}{\begin{picture}(9000,1200)\put(0,300){\usebox{\segm}}\put(1500,0){\usebox{\shortbprimem}}\end{picture}}
\savebox{\pluscsecondm}{\begin{picture}(9000,600)\put(0,0){\usebox{\shortcm}}\end{picture}}
\newcommand{\diagramBthree}{\begin{picture}(6000,600)\multiput(300,300)(1800,0){2}{\usebox{\segm}}\put(3900,300){\usebox{\rightbisegm}}\multiput(0,0)(5400,0){2}{\usebox{\GreyCircle}}\end{picture}}
\newcommand{\diagramBsix}{\begin{picture}(14700,600)\put(0,0){\usebox{\mediumam}}\put(5700,300){\usebox{\segm}}\put(7200,0){\usebox{\shortbm}}\end{picture}}
\newcommand{\diagramBsixbis}{\begin{picture}(14700,1200)\put(0,0){\usebox{\mediumam}}\put(5700,300){\usebox{\segm}}\put(7200,0){\usebox{\shortbprimem}}\end{picture}}
\newcommand{\diagramCone}{\begin{picture}(12000,1050)\put(300,750){\circle*{150}}\put(300,750){\circle{600}}\multiput(300,0)(2700,0){2}{\line(0,1){450}}\put(300,0){\line(1,0){2700}}\put(2700,450){\usebox{\shortcsecondm}}\end{picture}}
\newcommand{\diagramCfourprime}{\begin{picture}(9600,1200)\put(300,750){\usebox{\segm}}\put(1800,450){\usebox{\mediumam}}\multiput(300,750)(9000,0){2}{\circle{600}}\multiput(300,450)(9000,0){2}{\line(0,-1){450}}\put(300,0){\line(1,0){9000}}\put(7500,750){\usebox{\leftbisegm}}\end{picture}}
\newcommand{\diagramCfive}{\begin{picture}(9300,1200)\put(0,0){\usebox{\aprime}}\put(300,600){\usebox{\shortcm}}\end{picture}}
\newcommand{\diagramDfour}{\begin{picture}(14100,2400)\put(0,900){\usebox{\mediumam}}\put(5700,1200){\usebox{\segm}}\put(7200,0){\usebox{\shortdm}}\end{picture}}
\newcommand{\diagramDseven}{\begin{picture}(7200,3000)\multiput(300,1500)(3600,0){2}{\usebox{\segm}}\put(2100,1500){\usebox{\shortsusp}}\put(5700,300){\usebox{\bifurc}}\put(300,1500){\circle{600}}\multiput(6900,300)(0,2400){2}{\circle{600}}\put(0,1200){\multiput(300,300)(25,25){13}{\circle*{70}}\put(600,600){\multiput(0,0)(300,0){6}{\multiput(0,0)(25,-25){7}{\circle*{70}}}\multiput(150,-150)(300,0){6}{\multiput(0,0)(25,25){7}{\circle*{70}}}}\put(3600,600){\multiput(0,0)(300,0){7}{\multiput(0,0)(25,-25){7}{\circle*{70}}}\multiput(150,-150)(300,0){6}{\multiput(0,0)(25,25){7}{\circle*{70}}}}\multiput(2700,600)(300,0){3}{\circle*{70}}\multiput(300,300)(25,-25){13}{\circle*{70}}\put(600,0){\multiput(0,0)(300,0){6}{\multiput(0,0)(25,25){7}{\circle*{70}}}\multiput(150,150)(300,0){6}{\multiput(0,0)(25,-25){7}{\circle*{70}}}}\put(3600,0){\multiput(0,0)(300,0){7}{\multiput(0,0)(25,25){7}{\circle*{70}}}\multiput(150,150)(300,0){6}{\multiput(0,0)(25,-25){7}{\circle*{70}}}}\multiput(2700,0)(300,0){3}{\circle*{70}}}\put(3600,0){\thicklines\put(3300,2700){\line(-1,0){400}}\multiput(2900,2700)(-200,-200){5}{\line(0,-1){200}}\multiput(2900,2500)(-200,-200){4}{\line(-1,0){200}}\multiput(2100,1700)(-30,-10){5}{\line(-1,0){30}}\put(3300,300){\line(-1,0){400}}\multiput(2900,300)(-200,200){5}{\line(0,1){200}}\multiput(2900,500)(-200,200){4}{\line(-1,0){200}}\multiput(2100,1300)(-30,10){5}{\line(-1,0){30}}}\end{picture}}
\newcommand{\diagramEone}{\begin{picture}(7800,2100)\multiput(300,1800)(1800,0){4}{\usebox{\segm}}\put(3900,0){\usebox{\vsegm}}\multiput(0,1500)(7200,0){2}{\usebox{\GreyCircle}}\end{picture}}
\newcommand{\diagramEtwo}{\begin{picture}(7800,2400)\multiput(300,2100)(1800,0){4}{\usebox{\segm}}\put(3900,300){\usebox{\vsegm}}\multiput(300,2100)(7200,0){2}{\circle{600}}\put(3600,0){\usebox{\GreyCircle}}\multiput(300,2100)(25,25){13}{\circle*{70}}\put(600,2400){\multiput(0,0)(300,0){22}{\multiput(0,0)(25,-25){7}{\circle*{70}}}\multiput(150,-150)(300,0){22}{\multiput(0,0)(25,25){7}{\circle*{70}}}}\multiput(7500,2100)(-25,25){13}{\circle*{70}}\end{picture}}
\newcommand{\diagramFone}{\begin{picture}(6000,1050)\multiput(300,750)(3600,0){2}{\usebox{\segm}}\multiput(300,750)(5400,0){2}{\circle{600}}\put(1800,450){\usebox{\bsecondtwo}}\multiput(300,0)(5400,0){2}{\line(0,1){450}}\put(300,0){\line(1,0){5400}}\end{picture}}
\newcommand{\diagramFthree}{\begin{picture}(6000,600)\multiput(300,300)(3600,0){2}{\usebox{\segm}}\put(2100,300){\usebox{\rightbisegm}}\multiput(0,0)(3600,0){2}{\usebox{\GreyCircle}}\put(5700,300){\circle{600}}\end{picture}}
\newcommand{\diagramGone}{\begin{picture}(2400,1050)\put(300,900){\usebox{\lefttrisegm}}\multiput(0,0)(1800,0){2}{\usebox{\aprime}}\end{picture}}
\newcommand{\diagramGtwo}{\begin{picture}(6900,1500)\multiput(0,0)(4500,0){2}{\put(300,750){\usebox{\lefttrisegm}}\multiput(300,750)(1800,0){2}{\circle{600}}\put(300,0){\line(0,1){450}}\put(2100,1050){\line(0,1){450}}}\multiput(300,0)(1800,1500){2}{\line(1,0){4500}}\end{picture}}
\newcommand{\diagramaapp}{\begin{picture}(7800,3300)\multiput(0,0)(0,2700){2}{\multiput(300,300)(5400,0){2}{\usebox{\segm}}\put(2100,300){\usebox{\susp}}\multiput(300,300)(1800,0){2}{\circle{600}}\multiput(5700,300)(1800,0){2}{\circle{600}}}\multiput(300,600)(1800,0){2}{\line(0,1){2100}}\multiput(5700,600)(1800,0){2}{\line(0,1){2100}}\end{picture}}
\newcommand{\diagramacn}{\begin{picture}(10800,600)\put(0,0){\usebox{\dthree}}\put(3600,300){\usebox{\susp}}\put(7200,0){\usebox{\dthree}}\end{picture}}
\newcommand{\diagramaapplusqplusp}{\begin{picture}(16800,1500)\multiput(300,1200)(12600,0){2}{\usebox{\susp}}\multiput(3900,1200)(7200,0){2}{\usebox{\segm}}\put(5400,900){\usebox{\mediumam}}\multiput(300,1200)(16200,0){2}{\circle{600}}\multiput(3900,1200)(9000,0){2}{\circle{600}}\multiput(3900,900)(9000,0){2}{\line(0,-1){450}}\put(3900,450){\line(1,0){9000}}\multiput(300,900)(16200,0){2}{\line(0,-1){900}}\put(300,0){\line(1,0){16200}}\end{picture}}
\newcommand{\diagramaaastpplusoneplusp}{\begin{picture}(11400,1800)\multiput(300,1500)(7200,0){2}{\usebox{\susp}}\multiput(3900,1500)(1800,0){2}{\usebox{\segm}}\put(5400,600){\usebox{\aprime}}\multiput(300,1500)(10800,0){2}{\circle{600}}\multiput(3900,1500)(3600,0){2}{\circle{600}}\multiput(3900,1200)(3600,0){2}{\line(0,-1){900}}\put(3900,300){\line(1,0){3600}}\multiput(300,1200)(10800,0){2}{\line(0,-1){1200}}\put(300,0){\line(1,0){10800}}\end{picture}}
\newcommand{\diagramaon}{\begin{picture}(7800,900)\multiput(300,900)(5400,0){2}{\usebox{\segm}}\put(2100,900){\usebox{\susp}}\multiput(0,0)(5400,0){2}{\multiput(0,0)(1800,0){2}{\usebox{\aprime}}}\end{picture}}
\newcommand{\diagramacastn}{\begin{picture}(11400,600)\multiput(0,0)(7200,0){2}{\multiput(0,0)(1800,0){2}{\usebox{\atwo}}}\put(3900,300){\usebox{\susp}}\multiput(3900,300)(25,25){13}{\circle*{70}}\multiput(4200,600)(2400,0){2}{\multiput(0,0)(300,0){2}{\multiput(0,0)(25,-25){7}{\circle*{70}}}\multiput(150,-150)(300,0){2}{\multiput(0,0)(25,25){7}{\circle*{70}}}}\multiput(7500,300)(-25,25){13}{\circle*{70}}\end{picture}}
\newcommand{\diagrambbpp}{\begin{picture}(9600,3300)\multiput(0,0)(0,2700){2}{\multiput(300,300)(5400,0){2}{\usebox{\segm}}\put(7500,300){\usebox{\rightbisegm}}\put(2100,300){\usebox{\susp}}\multiput(300,300)(1800,0){2}{\circle{600}}\multiput(5700,300)(1800,0){3}{\circle{600}}}\multiput(300,600)(1800,0){2}{\line(0,1){2100}}\multiput(5700,600)(1800,0){3}{\line(0,1){2100}}\end{picture}}
\newcommand{\diagrambcastn}{\begin{picture}(11400,600)\multiput(0,0)(1800,0){2}{\usebox{\atwo}}\put(7200,0){\usebox{\atwo}}\put(9000,0){\usebox{\bsecondtwo}}\put(3900,300){\usebox{\susp}}\multiput(3900,300)(25,25){13}{\circle*{70}}\multiput(4200,600)(2400,0){2}{\multiput(0,0)(300,0){2}{\multiput(0,0)(25,-25){7}{\circle*{70}}}\multiput(150,-150)(300,0){2}{\multiput(0,0)(25,25){7}{\circle*{70}}}}\multiput(7500,300)(-25,25){13}{\circle*{70}}\end{picture}}
\newcommand{\diagrambopplusq}{\begin{picture}(14700,1800)\multiput(300,900)(5400,0){2}{\usebox{\segm}}\put(2100,900){\usebox{\susp}}\multiput(0,0)(1800,0){2}{\usebox{\aprime}}\put(5400,0){\usebox{\aprime}}\put(7200,600){\usebox{\shortbprimem}}\end{picture}}
\newcommand{\diagramacastpplusbprimeq}{\begin{picture}(18300,1200)\multiput(0,0)(1800,0){2}{\usebox{\atwo}}\put(7200,0){\usebox{\atwo}}\put(9300,300){\usebox{\segm}}\put(3900,300){\usebox{\susp}}\multiput(3900,300)(25,25){13}{\circle*{70}}\multiput(4200,600)(2400,0){2}{\multiput(0,0)(300,0){2}{\multiput(0,0)(25,-25){7}{\circle*{70}}}\multiput(150,-150)(300,0){2}{\multiput(0,0)(25,25){7}{\circle*{70}}}}\multiput(7500,300)(-25,25){13}{\circle*{70}}\put(10800,0){\usebox{\shortbprimem}}\end{picture}}
\newcommand{\diagrambcprimen}{\begin{picture}(11400,1200)\put(0,600){\multiput(0,0)(1800,0){2}{\usebox{\atwo}}\put(7200,0){\usebox{\atwo}}\put(3900,300){\usebox{\susp}}\multiput(3900,300)(25,25){13}{\circle*{70}}\multiput(4200,600)(2400,0){2}{\multiput(0,0)(300,0){2}{\multiput(0,0)(25,-25){7}{\circle*{70}}}\multiput(150,-150)(300,0){2}{\multiput(0,0)(25,25){7}{\circle*{70}}}}\multiput(7500,300)(-25,25){13}{\circle*{70}}}\put(9000,600){\usebox{\btwo}}\put(10800,0){\usebox{\aprime}}\end{picture}}
\newcommand{\diagramacastpplusbq}{\begin{picture}(18300,600)\multiput(0,0)(1800,0){2}{\usebox{\atwo}}\put(7200,0){\usebox{\atwo}}\put(9300,300){\usebox{\segm}}\put(3900,300){\usebox{\susp}}\multiput(3900,300)(25,25){13}{\circle*{70}}\multiput(4200,600)(2400,0){2}{\multiput(0,0)(300,0){2}{\multiput(0,0)(25,-25){7}{\circle*{70}}}\multiput(150,-150)(300,0){2}{\multiput(0,0)(25,25){7}{\circle*{70}}}}\multiput(7500,300)(-25,25){13}{\circle*{70}}\put(10800,0){\usebox{\shortbm}}\end{picture}}
\newcommand{\diagramccpplusq}{\begin{picture}(19800,600)\put(0,0){\usebox{\dthree}}\put(3600,300){\usebox{\susp}}\put(7200,0){\usebox{\dthree}}\put(10800,0){\usebox{\shortcm}}\end{picture}}
\newcommand{\diagramccprimepplustwo}{\begin{picture}(12900,1200)\put(0,0){\usebox{\dthree}}\put(3600,300){\usebox{\susp}}\put(7200,0){\usebox{\dthree}}\put(10800,0){\usebox{\cprimetwo}}\end{picture}}
\newcommand{\diagramccpp}{\begin{picture}(9600,3300)\multiput(0,0)(0,2700){2}{\multiput(300,300)(5400,0){2}{\usebox{\segm}}\put(7500,300){\usebox{\leftbisegm}}\put(2100,300){\usebox{\susp}}\multiput(300,300)(1800,0){2}{\circle{600}}\multiput(5700,300)(1800,0){3}{\circle{600}}}\multiput(300,600)(1800,0){2}{\line(0,1){2100}}\multiput(5700,600)(1800,0){3}{\line(0,1){2100}}\end{picture}}
\newcommand{\diagramcon}{\begin{picture}(9600,1050)\multiput(300,900)(5400,0){2}{\usebox{\segm}}\put(2100,900){\usebox{\susp}}\multiput(0,0)(1800,0){2}{\usebox{\aprime}}\multiput(5400,0)(1800,0){3}{\usebox{\aprime}}\put(7500,900){\usebox{\leftbisegm}}\end{picture}}
\newcommand{\diagramaaonepluspplusonepluscsecondq}{\begin{picture}(18300,1200)\multiput(300,750)(7200,0){2}{\usebox{\segm}}\put(1800,450){\usebox{\mediumam}}\multiput(300,750)(9000,0){2}{\circle{600}}\multiput(300,450)(9000,0){2}{\line(0,-1){450}}\put(300,0){\line(1,0){9000}}\put(9300,450){\usebox{\shortcm}}\end{picture}}
\newcommand{\diagramaaoneonepluscsecondnonepluscsecondntwo}{\begin{picture}(9300,3300)\multiput(0,0)(0,2700){2}{\put(300,300){\circle{600}}\put(300,0){\usebox{\shortcm}}}\put(300,600){\line(0,1){2100}}\end{picture}}
\newcommand{\diagramacastppluscsecondq}{\begin{picture}(18300,600)\multiput(0,0)(1800,0){2}{\usebox{\atwo}}\put(7200,0){\usebox{\atwo}}
\put(3900,300){\usebox{\susp}}\multiput(3900,300)(25,25){13}{\circle*{70}}\multiput(4200,600)(2400,0){2}{\multiput(0,0)(300,0){2}{\multiput(0,0)(25,-25){7}{\circle*{70}}}\multiput(150,-150)(300,0){2}{\multiput(0,0)(25,25){7}{\circle*{70}}}}\multiput(7500,300)(-25,25){13}{\circle*{70}}
\put(9300,0){\usebox{\shortcm}}\end{picture}}
\newcommand{\diagramdcprimen}{\begin{picture}(14100,3300)\multiput(0,1800)(7200,0){2}{\usebox{\dthree}}\put(3600,2100){\usebox{\susp}}\put(10800,2100){\usebox{\segm}}\put(12600,900){\usebox{\bifurc}}\put(12300,1800){\usebox{\GreyCircle}}\put(13500,0){\usebox{\aprime}}\end{picture}}
\newcommand{\diagramdopplusq}{\begin{picture}(14100,2400)\multiput(0,300)(5400,0){2}{\put(0,0){\usebox{\aprime}}\put(300,900){\usebox{\segm}}}\put(1800,300){\usebox{\aprime}}\put(7200,0){\usebox{\shortdm}}\put(2100,1200){\usebox{\susp}}
\end{picture}}
\newcommand{\diagramdcnodd}{\begin{picture}(12300,3000)\put(0,1200){\usebox{\dthree}}\put(3600,1500){\usebox{\susp}}\put(7200,1200){\usebox{\dthree}}\put(10800,300){\usebox{\bifurc}}\multiput(12000,300)(0,2400){2}{\circle{600}}\thicklines\multiput(12000,300)(0,2000){2}{\line(0,1){400}}\multiput(12000,700)(-200,200){4}{\line(-1,0){200}}\multiput(11800,700)(-200,200){4}{\line(0,1){200}}\multiput(12000,2300)(-200,-200){4}{\line(-1,0){200}}\multiput(11800,2300)(-200,-200){4}{\line(0,-1){200}}\end{picture}}
\newcommand{\diagramddpp}{\begin{picture}(9900,5700)\put(0,1800){\multiput(300,1200)(900,-2700){2}{\multiput(0,1200)(5400,0){2}{\usebox{\segm}}\put(1800,1200){\usebox{\susp}}\put(7200,0){\usebox{\bifurc}}\multiput(0,1200)(5400,0){2}{\multiput(0,0)(1800,0){2}{\circle{600}}}\multiput(8400,0)(0,2400){2}{\circle{600}}}\put(400,900){\multiput(0,1200)(5400,0){2}{\multiput(0,0)(1800,0){2}{\line(1,-3){700}}}\multiput(8400,0)(0,2400){2}{\line(1,-3){700}}}}\end{picture}}
\newcommand{\diagramdon}{\begin{picture}(9000,3300)\multiput(0,1200)(5400,0){2}{\multiput(0,0)(1800,0){2}{\usebox{\aprime}}}\multiput(300,2100)(5400,0){2}{\usebox{\segm}}\put(2100,2100){\usebox{\susp}}\put(7500,900){\usebox{\bifurc}}\multiput(8400,0)(0,2400){2}{\usebox{\aprime}}\end{picture}}
\newcommand{\diagramdsastfour}{\begin{picture}(3600,3000)\put(300,1500){\usebox{\segm}}\put(2100,300){\usebox{\bifurc}}\put(300,1500){\circle{600}}\multiput(3300,300)(0,2400){2}{\circle{600}}\multiput(300,1500)(25,25){13}{\circle*{70}}\multiput(600,1800)(300,0){5}{\multiput(0,0)(25,-25){7}{\circle*{70}}}\multiput(750,1650)(300,0){4}{\multiput(0,0)(25,25){7}{\circle*{70}}}\multiput(300,1500)(25,-25){13}{\circle*{70}}\multiput(600,1200)(300,0){5}{\multiput(0,0)(25,25){7}{\circle*{70}}}\multiput(750,1350)(300,0){4}{\multiput(0,0)(25,-25){7}{\circle*{70}}}\thicklines\put(3300,2700){\line(-1,0){400}}\multiput(2900,2700)(-200,-200){5}{\line(0,-1){200}}\multiput(2900,2500)(-200,-200){4}{\line(-1,0){200}}\multiput(2100,1700)(-30,-10){5}{\line(-1,0){30}}\put(3300,300){\line(-1,0){400}}\multiput(2900,300)(-200,200){5}{\line(0,1){200}}\multiput(2900,500)(-200,200){4}{\line(-1,0){200}}\multiput(2100,1300)(-30,10){5}{\line(-1,0){30}}\multiput(3300,300)(0,2000){2}{\line(0,1){400}}\multiput(3300,700)(-200,200){4}{\line(-1,0){200}}\multiput(3100,700)(-200,200){4}{\line(0,1){200}}\multiput(3300,2300)(-200,-200){4}{\line(-1,0){200}}\multiput(3100,2300)(-200,-200){4}{\line(0,-1){200}}\end{picture}}
\newcommand{\diagramdcastn}{\begin{picture}(10800,3000)\multiput(0,1200)(1800,0){2}{\usebox{\atwo}}\put(7200,1200){\usebox{\atwo}}\put(3900,1500){\usebox{\susp}}\put(9300,300){\usebox{\bifurc}}\multiput(10500,300)(0,2400){2}{\circle{600}}\multiput(3900,1500)(25,25){13}{\circle*{70}}\multiput(4200,1800)(2400,0){2}{\multiput(0,0)(300,0){2}{\multiput(0,0)(25,-25){7}{\circle*{70}}}\multiput(150,-150)(300,0){2}{\multiput(0,0)(25,25){7}{\circle*{70}}}}\multiput(7500,1500)(-25,25){13}{\circle*{70}}\thicklines\put(9300,1500){\line(0,1){400}}\multiput(9300,1900)(200,200){4}{\line(1,0){200}}\multiput(9500,1900)(200,200){4}{\line(0,1){200}}\put(10500,2700){\line(-1,0){400}}\put(9300,1500){\line(1,0){400}}\multiput(9700,1500)(200,-200){4}{\line(0,-1){200}}\multiput(9700,1300)(200,-200){4}{\line(1,0){200}}\put(10500,300){\line(0,1){400}}\end{picture}}
\newcommand{\diagramacastpplusdq}{\begin{picture}(17700,2400)\put(0,900){\multiput(0,0)(7200,0){2}{\put(0,0){\usebox{\atwo}}}\put(1800,0){\usebox{\atwo}}\put(3900,300){\usebox{\susp}}\multiput(3900,300)(25,25){13}{\circle*{70}}\multiput(4200,600)(2400,0){2}{\multiput(0,0)(300,0){2}{\multiput(0,0)(25,-25){7}{\circle*{70}}}\multiput(150,-150)(300,0){2}{\multiput(0,0)(25,25){7}{\circle*{70}}}}\multiput(7500,300)(-25,25){13}{\circle*{70}}}\put(9300,1200){\usebox{\segm}}\put(10800,0){\usebox{\shortdm}}\end{picture}}
\newcommand{\diagramefprimen}{\begin{picture}(11400,2100)\multiput(300,1800)(1800,0){5}{\usebox{\segm}}\put(3900,0){\usebox{\vsegm}}\multiput(0,1500)(7200,0){2}{\usebox{\GreyCircle}}\multiput(9000,900)(1800,0){2}{\usebox{\aprime}}\put(9300,1800){\usebox{\shortsusp}}\end{picture}}
\newcommand{\diagramecseven}{\begin{picture}(9300,2100)\multiput(300,1800)(1800,0){5}{\usebox{\segm}}\put(3900,0){\usebox{\vsegm}}\multiput(3600,1500)(3600,0){2}{\usebox{\GreyCircle}}\multiput(0,900)(1800,0){2}{\usebox{\aprime}}\end{picture}}
\newcommand{\diagrameepp}{\begin{picture}(12450,5550)\put(1050,0){\multiput(0,0)(-1050,3150){2}{\multiput(300,2100)(1800,0){4}{\usebox{\segm}}\put(3900,300){\usebox{\vsegm}}\multiput(7500,2100)(1800,0){2}{\usebox{\shortsusp}}\multiput(300,2100)(1800,0){7}{\circle{600}}\put(3900,300){\circle{600}}}\put(-100,300){\multiput(300,2100)(1800,0){5}{\line(-1,3){850}}\put(3900,300){\line(-1,3){850}}\multiput(9300,2100)(1800,0){2}{\multiput(0,-50)(-50,150){18}{\line(0,1){100}}}}}\end{picture}}
\newcommand{\diagrameasix}{\begin{picture}(7800,3600)\multiput(300,2700)(1800,0){4}{\usebox{\segm}}\put(3900,900){\usebox{\vsegm}}\multiput(3600,0)(0,1800){2}{\usebox{\aprime}}\multiput(300,2700)(5400,0){2}{\multiput(0,0)(1800,0){2}{\circle{600}}}\multiput(300,3000)(7200,0){2}{\line(0,1){600}}\put(300,3600){\line(1,0){7200}}\multiput(2100,3000)(3600,0){2}{\line(0,1){300}}\put(2100,3300){\line(1,0){3600}}\end{picture}}
\newcommand{\diagrameon}{\begin{picture}(11400,3000)\multiput(300,2700)(1800,0){4}{\usebox{\segm}}\put(3900,900){\usebox{\vsegm}}\multiput(0,1800)(1800,0){7}{\usebox{\aprime}}\put(3600,0){\usebox{\aprime}}\multiput(7500,2700)(1800,0){2}{\usebox{\shortsusp}}\end{picture}}
\newcommand{\diagramecastn}{\begin{picture}(11400,2400)\multiput(300,2100)(1800,0){4}{\usebox{\segm}}\put(3900,300){\usebox{\vsegm}}\multiput(7500,2100)(1800,0){2}{\usebox{\shortsusp}}\multiput(0,1800)(1800,0){6}{\multiput(300,300)(1800,0){2}{\circle{600}}\multiput(300,300)(25,25){13}{\circle*{70}}\multiput(600,600)(300,0){4}{\multiput(0,0)(25,-25){7}{\circle*{70}}}\multiput(750,450)(300,0){4}{\multiput(0,0)(25,25){7}{\circle*{70}}}\multiput(2100,300)(-25,25){13}{\circle*{70}}}\put(3900,300){\circle{600}}\put(3600,0){\multiput(300,300)(25,25){13}{\circle*{70}}\multiput(600,600)(0,300){4}{\multiput(0,0)(-25,25){7}{\circle*{70}}}\multiput(450,750)(0,300){4}{\multiput(0,0)(25,25){7}{\circle*{70}}}\multiput(300,2100)(25,-25){13}{\circle*{70}}}\end{picture}}
\newcommand{\diagramefsixplusatwo}{\begin{picture}(11400,2100)\multiput(300,1800)(1800,0){4}{\usebox{\segm}}\put(3900,0){\usebox{\vsegm}}\multiput(0,1500)(7200,0){2}{\usebox{\GreyCircle}}\put(7500,1800){\usebox{\segm}}\put(9000,1500){\usebox{\atwo}}\end{picture}}
\newcommand{\diagramaatwotwoplusatwo}{\begin{picture}(7800,3000)\put(0,-600){\multiput(300,2700)(1800,0){4}{\usebox{\segm}}\put(3900,900){\usebox{\vsegm}}\multiput(300,2700)(5400,0){2}{\multiput(0,0)(1800,0){2}{\circle{600}}}\multiput(300,3000)(7200,0){2}{\line(0,1){600}}\put(300,3600){\line(1,0){7200}}\multiput(2100,3000)(3600,0){2}{\line(0,1){300}}\put(2100,3300){\line(1,0){3600}}}\put(3600,0){\multiput(300,300)(0,1800){2}{\circle{600}}\multiput(300,300)(25,25){13}{\circle*{70}}\multiput(600,600)(0,300){4}{\multiput(0,0)(-25,25){7}{\circle*{70}}}\multiput(450,750)(0,300){4}{\multiput(0,0)(25,25){7}{\circle*{70}}}\multiput(300,2100)(25,-25){13}{\circle*{70}}}\end{picture}}
\newcommand{\diagramacfiveplusatwo}{\begin{picture}(9300,2100)\put(0,1500){\usebox{\atwo}}\put(2100,0){\multiput(0,1800)(1800,0){4}{\usebox{\segm}}\put(1800,0){\usebox{\vsegm}}\multiput(1500,1500)(3600,0){2}{\usebox{\GreyCircle}}}\end{picture}}
\newcommand{\diagramfffourfour}{\begin{picture}(6000,3300)\multiput(0,0)(0,2700){2}{\multiput(300,300)(3600,0){2}{\usebox{\segm}}\put(2100,300){\usebox{\rightbisegm}}\multiput(300,300)(1800,0){4}{\circle{600}}}\multiput(300,600)(1800,0){4}{\line(0,1){2100}}\end{picture}}
\newcommand{\diagramfofour}{\begin{picture}(6000,900)\multiput(300,900)(3600,0){2}{\usebox{\segm}}\put(2100,900){\usebox{\rightbisegm}}\multiput(0,0)(1800,0){4}{\usebox{\aprime}}\end{picture}}
\newcommand{\diagramaotwoplusatwo}{\begin{picture}(6000,1200)\multiput(300,900)(3600,0){2}{\usebox{\segm}}\put(2100,900){\usebox{\rightbisegm}}\multiput(3600,0)(1800,0){2}{\usebox{\aprime}}\put(0,600){\usebox{\atwo}}\end{picture}}
\newcommand{\diagramfcastfour}{\begin{picture}(6000,600)\multiput(0,0)(3600,0){2}{\usebox{\atwo}}\put(1800,0){\usebox{\btwo}}\end{picture}}
\begin{document}

\title[Classification of strict wonderful varieties]
{Classification of strict wonderful varieties}
\author[P. Bravi and S. Cupit-Foutou ]{P. Bravi and S. Cupit-Foutou}

\address{Paolo Bravi, Dipartimento di Matematica, Universit\`a di Padova, Italy, 
\textit{Currently:} Dipartimento di Matematica, Universit\`a di Roma La Sapienza, P.le Aldo Moro 5, 00185 Roma, Italy}
\email{bravi@mat.uniroma1.it}

\address{St\'ephanie Cupit-Foutou, Mathematisches Institut, Universit\"at zu K\"oln, Weyertal Str.\ 86-90, 
50931 K\"oln, Germany}
\email{scupit@math.uni-koeln.de}

\begin{abstract}
In the setting of strict wonderful varieties we prove Luna's conjecture,
saying that wonderful varieties are classified by combinatorial objects, the so-called spherical systems.
In particular, we prove that primitive strict wonderful varieties are mostly obtained from symmetric spaces, spherical nilpotent
orbits and model spaces. To make the paper as self-contained as possible, we also gather some known results on these families and more generally on wonderful varieties.  
\end{abstract}

\maketitle

\tableofcontents

\section{Introduction}
First examples of wonderful varieties appeared in enumerative geometry
with the so-called variety of complete quadrics in the complex
projective space $\mathbb P^n$.
This variety is a compactification of the set of non-degenerate
quadrics as an algebraic subset of some given projective  space.
It has beautiful properties like being smooth, having finitely many orbits under the action
of the automorphism group $PSL_{n+1}$ of $\mathbb P^n$...
The set of non-degenerate quadrics is isomorphic to the symmetric space $PSL_{n+1}/PSO_{n+1}$. 
Generalisations of the variety of complete quadrics were constructed
and studied by De Concini and Procesi in \cite{DP83} as they considered more general symmetric spaces.

In \cite{LV}, Luna and Vust developed  a general theory on embeddings of homogeneous spaces for algebraic groups.
This theory is particularly well-developed in the context of spherical algebraic varieties (e.g.\ toric varieties, flag varieties, symmetric varieties...) and can be seen as a generalisation of the combinatorics of toric varieties.

Complete embeddings sharing the properties of De Concini-Procesi compactifications are called wonderful varieties.
Luna proved that wonderful varieties were spherical (\cite{L96}) and that they play a central role in the study of spherical varieties (\cite{Lu01}).
After rank 1 and 2 wonderful varieties were classified (by
Ahiezer \cite{Ah} and Wasserman \cite{W} respectively), 
Luna introduced in \cite{Lu01} combinatorial invariants, \textit{the spherical systems}, that he was able to attach to any wonderful variety.
Conversely, he conjectured in loc.\ cit.\ that wonderful varieties could be classified by these spherical systems. 
Only for particular acting groups, positive answers to this conjecture
have been obtained (\textit{see} loc.\ cit., \cite {BP,Bra}). For any
group, the fact
that non-isomorphic wonderful varieties have different spherical
systems has recently been proved by Losev (\cite{Lo}, \textit{see}
also Section~\ref{uniqueness}).

To any spherical homogeneous space, say $G/H$ with $G$ a reductive
algebraic group, one can naturally assign a wonderful variety:
take the wonderful embedding of  $G/N_G(H)$ where $N_G(H)$ is the
normaliser of $H$ in $G$. 
This wonderful embedding exists by a result of Knop (\cite{Kn96}) and
it is unique.

Here we are interested in wonderful varieties whose every point has
a selfnormalising stabiliser.
This property can be read off the associated spherical system.  
Spherical systems and wonderful varieties satisfying this property are
called \textit{strict}. 

In this paper, we positively answer to Luna's conjecture for a
slightly more general class of spherical systems: we prove that to
every such spherical system, there corresponds a (unique) wonderful
variety.

It is known that for many purposes the study of wonderful varieties
can be reduced to that of the so-called primitive wonderful varieties.

We show that most of the strict primitive wonderful varieties
come either from an affine spherical homogeneous space (and often in
particular from a symmetric variety), from a spherical nilpotent adjoint orbit or from a model homogeneous space.

This paper is an expansion of Section~6 of \cite{BCF06}. 

\smallbreak\noindent{\textbf{Organisation of the paper.}}
In the second section, we gather definitions and results related to wonderful and spherical varieties. We also introduce \textit{strict} wonderful varieties.
Some classes of examples of spherical varieties are given;
their properties and their classification are recalled with more details left to the corresponding appendices.
In the third section, following Luna, we introduce several combinatorial invariants: \textit{strict} spherical systems and their colours.
In the fourth section we state Luna's dictionary which relates some geometrical properties of wonderful varieties
to combinatorial properties of their spherical systems.
The fifth section is devoted to the main result of this paper; we give
in particular the list of all primitive spherical systems without
simple spherical roots 
with their corresponding wonderful subgroups.
The last section is dedicated to the proofs of our results.
 
\smallbreak\noindent{\textbf{Acknowledgments.}}
Both authors would like to thank P.~Littelmann and D.~Luna for helpful
discussions, suggestions and support. 
They are also grateful to the anonymous referee 
for his careful reading and valuable comments.

\smallbreak\noindent{\textbf{Main notation.}}
In the following, $G$ is a connected reductive complex algebraic group and $T$ is a maximal torus of $G$.
We fix a Borel subgroup $B$ containing $T$ and denote by $B^-$ its opposite, that is $B\cap B^-=T$. 
Let $\Phi$ be the root system of $G$ corresponding to $T$ and $S$ the set of simple roots associated to $B$ (numbered as in Bourbaki, \cite{Bo}). The coroot of the root $\alpha$ is denoted by $\alpha^\vee$.

\section{Wonderful varieties}

\subsection{Definitions}

In this section we freely recall from \cite{Kn91,Bri97,Lu01} notions and results on spherical varieties
with particular attention to wonderful varieties.

\begin{definition}
An (irreducible) algebraic $G$-variety $X$ is said to be \textit{wonderful of rank $r$} if
\smallbreak
\noindent
{\rm(i)}\enspace
it is smooth and complete,
\smallbreak
\noindent
{\rm(ii)}\enspace
it has an open $G$-orbit whose complement is the union of smooth prime $G$-divisors $D_i$ ($i=1,\dots,r$) with normal crossings 
and such that $\cap_1^r D_i\neq\emptyset$,
\smallbreak
\noindent
{\rm(iii)}\enspace
if $x,x'$ are such that $\{i:x\in D_i\}=\{i:x'\in D_i\}$ then $G.
x=G. x'$.
\smallbreak
\end{definition}

We shall say that a subgroup $H$ of $G$ (or $G/H$ itself) is \textit{wonderful} if the homogeneous space $G/H$ has a wonderful embedding, that is if $G/H$ can be realised as the dense $G$-orbit of a wonderful variety. This wonderful embedding is unique (up to isomorphism).

The radical of $G$ always acts trivially on a wonderful variety.

Note that the flag varieties are wonderful of rank $0$.
More generally, wonderful varieties are projective and spherical
(\textit{see} \cite{L96}).

The terminology \textit{spherical} means for an algebraic $G$-variety
that it is normal and has a dense $B$-orbit (e.g.\ normal toric varieties).
It follows that spherical varieties have finitely many $B$-orbits.

A subgroup $H$ of $G$ is called spherical if $G/H$ is spherical. Up to a change of representative in the conjugation class of $H$ in $G$, one may assume that $B\,H$ is open in $G$.

For a given spherical $G$-variety $X$ we will usually denote by $H$ its generic stabiliser, that is the stabiliser of a point in the dense $G$-orbit of $X$.

\bigbreak
\paragraph{\textbf{Colours.}}
The set $\Delta_X$ of \textit{colours} of $X$ is defined as the set of irreducible components of
the complementary  in $G/H$ of the dense $B$-orbit. Note that this
$B$-orbit is affine like every $B$-orbit, and so the colours are divisors of $G/H$. 

Let $P_X$ denote the stabiliser of the colours of $X$, it is a parabolic subgroup containing $B$.

\bigbreak
\paragraph{\textbf{Spherical roots.}}
Consider the field $\mathbb C(X)$ of rational functions on $X$
endowed with the dual action of $G$: 
$$
(g. f)(x)=f(g^{-1}. x) \quad\mbox{ for } f\in\mathbb C(X),\ g\in G \mbox{ and } x\in X.
$$
Denote by $\Xi_X$ the lattice formed by the weights of the $B$-eigenvectors in $\mathbb C(X)$.

Let $\mathcal V_X$ be the set of $G$-invariant $\mathbb Q$-valued discrete valuations of $\mathbb C(X)$;
where a $G$-invariant valuation $v$ is a valuation with the property that $v(g.f)=v(f)$ for any $g\in G$ and $f\in\mathbb C(X)$.
We will regard $\mathcal V_X$ as a subset of $\mathrm{Hom}(\Xi_X,\mathbb Q)$ via the injective map $\rho$ defined by:
$$
\rho(\nu)(\gamma)=\nu (f_\gamma)
$$ 
where $f_\gamma$ is a $B$-eigenvector of weight $\gamma$.
Note that $f_\gamma$ is uniquely determined up to scalar multiple.
Moreover, $\mathcal V_X$ is a simplicial cone in $\mathrm{Hom}(\Xi_X,\mathbb Q)$.
 
The set \textit{$\Sigma_X$ of spherical roots of $X$} is defined
as the set of primitive elements of $\Xi_X$ such that 
$$
\mathcal V_X=\{\chi\in\mathrm{Hom}(\Xi_X,\mathbb Q):\chi(\sigma)\leq 0,\mbox{ for all }\sigma\in\Sigma_X\}.
$$

Suppose $X$ is wonderful. 
Then the spherical roots of $X$ may also be defined as
the $T$-weights of $T_y X/T_y Y$ where $y$ is the point fixed by the Borel subgroup $B^-$ in $Y$, the (unique) closed $G$-orbit of $X$.
Further, the set $\Sigma_X$ is then a basis of the lattice $\Xi_X$.

Rank 1 wonderful varieties were classified by Ahiezer in \cite{Ah}.
As a consequence the possible spherical roots are all known; if $G$ is adjoint (i.e. of trivial centre), each spherical root is either a positive root or a sum of two such roots. In Section~\ref{Luna's diagrams}, we have reproduced from \cite{W} the list of all spherical roots, for $G$ adjoint, except the spherical root which is as well a simple root.

\begin{definition}
A wonderful variety $X$ is called \textit{strict} if each of its points has a selfnormalising stabiliser.
\end{definition}

This condition is equivalent to that given in Section~\ref{sphsys} in terms of colours and spherical roots. 
Pezzini proved that the strict wonderful varieties are exactly the wonderful varieties which have a simple immersion \textit{i.e.}\
those that can be embedded in the projectivisation of a simple $G$-module
(\textit{see} \cite{Pe07}).
Such varieties appear also in \cite{BCF} (\textit{see} also Section~\ref{uniqueness}).

Recall that in general 
the normaliser of a spherical subgroup is selfnormalising.
A selfnormalising spherical subgroup is wonderful (Corollary~7.6
in \cite{Kn96}); 
and a wonderful subgroup necessarily has finite index in its normaliser.

In the literature, important families of spherical varieties have
already appeared: symmetric varieties, spherical nilpotent orbits in
simple Lie algebras, model homogeneous spaces.
In the following, we shall gather some of their well-known properties.
Further details can be found in the corresponding appendices.

\subsection{Symmetric spaces}

Let $\sigma$ be a non-identical involution of $G$ and  $G^\sigma$ be the corresponding fixed point subset of $G$.
If $H$ is a subgroup of $G$ such that
$$
G^\sigma\subseteq H\subseteq N_G(G^\sigma)
$$
then the homogeneous space $G/H$ is called \textit{symmetric} or $H$ is called a \textit{symmetric subgroup} of $G$.
Here $N_G(G^\sigma)$ stands for the normaliser of $G^\sigma$ in $G$.

De Concini and Procesi proved in \cite{DP83} that
selfnormalising symmetric subgroups are wonderful.
More generally, symmetric subgroups are spherical  by \cite{Vu}.
Further $G^\sigma$ is connected (if $G$ is simply connected), reductive and of finite index in its normaliser (if $G$ is semisimple), \cite{St}.

The classification of involutions of $G$ was established by Cartan in the 1920es.
Vust described in \cite{Vu} the valuation cone and the colours associated to a symmetric space. 
We shall recall these results in Appendix~\ref{Symmetric varieties}.

\subsection{Spherical nilpotent orbits}
Let $G$ be semisimple.
Consider the adjoint action of $G$: $G$ acts by conjugation on its Lie algebra $\mathfrak g$.
Panyushev characterises in \cite{Pa94} (\textit{see} also \cite{Pa03} for a classification-free proof) the adjoint nilpotent orbits in $\mathfrak g$ which are $G$-spherical
and provides the list of spherical nilpotent orbits.
More specifically, he proves that an adjoint nilpotent orbit $G. e$ is spherical if and only if $(\mathrm{ad}\,e)^4=0$.

In Appendix~\ref{nilpotent orbits} we reproduce the list obtained in \cite{Pa94} of nilpotent spherical orbits of height 3, i.e.\ with $(\mathrm{ad}\,e)^3\neq0$.

\subsection{Model spaces}\label{modelspaces}
A quasi-affine homogeneous space $G/H$ is called \textit{model} if its
algebra of regular functions  decomposes as a $G$-module into the
multiplicity-free direct sum of all simple $G$-modules. In particular,
the subgroup $H$ is spherical (\textit{see} \cite{Bri97}).

Let us suppose $G$ to be semisimple.
The main theorem of \cite{Lu07} states that there exists a strict wonderful $G$-variety $X$, the \textit{model variety}, parametrising model $G$-homogeneous spaces explicitly as follows.

If the homogeneous space $G/H$ is model then $H$ stabilises a unique point in $X$ whose stabiliser equals $N_G(H)$.
And, conversely, for all $x\in X$ the homogeneous space $G/(G_x)^\sharp$ is model with $N_G((G_x)^\sharp)=G_x$. 
Here $(G_x)^\sharp$ stands for the intersection of the kernels of all characters of $G_x$. 

Luna obtained an explicit description of the \textit{principal} model homogeneous spaces $G/(G_x)^\sharp$, namely those with $x$ in the dense orbit of the model variety $X$, for any $G$. We report the generic stabiliser $G_x$ of $X$ in detail in Appendix~\ref{model spaces}, for $G$ simply connected.

\section{Spherical systems}
\label{sphsys}

Throughout this section, $G$ is assumed to be semisimple.

\begin{definition}
A \textit{strict spherical system} for $G$
is a couple
consisting of a subset $S^p$ of simple roots,
a set $\Sigma$ of spherical roots for $G$
(namely $T$-weights that are the spherical root of a rank 1 wonderful $G$-variety) 
which satisfies the following properties.
\smallbreak\noindent($\Sigma 1$)\enspace If $2\alpha \in \Sigma \cap 2S$ then $\frac{1}{2}\langle\alpha^\vee, \gamma \rangle$
is a non-positive integer for every $\gamma \in \Sigma \setminus \{ 2\alpha \}$.
\smallbreak\noindent($\Sigma 2$)\enspace If $\alpha, \beta \in S$ are orthogonal and $\alpha + \beta \in \Sigma$
then $\langle\alpha ^\vee,\gamma\rangle = \langle\beta ^\vee,\gamma\rangle$
for every $\gamma \in \Sigma$.
\smallbreak\noindent(S)\enspace For every $\gamma \in \Sigma$, there exists a rank 1 wonderful $G$-variety $X$
with $\gamma$ as spherical root and $S^p$ equal to the set of simple roots associated to $P_X$.
\smallbreak\noindent(R)\enspace For every $\gamma \in \Sigma$, there exists no rank 1 wonderful $G$-variety $X$ with $2\gamma$ as spherical root and $S^p$ equal to the set of simple roots associated to $P_X$. 
\end{definition}

It follows readily from the list of rank 1 wonderful varieties in \cite{W} 
that condition (R) implies that the set
$\Sigma$ of spherical roots can not contain simple roots, that is,
\smallbreak\noindent(R')\enspace $\Sigma\cap\ S=\emptyset$.

\begin{definition}
A spherical system for $G$ without simple spherical roots is a couple
$(S^p, \Sigma)$ as above satisfying conditions ($\Sigma 1$),
($\Sigma 2$), (S) and (R').
\end{definition}

In the following we will consider spherical systems without simple
spherical roots. These are slightly (but strictly) more general
objects than strict spherical systems.

In full generality, a spherical system is defined by Luna as a triple
($S^p$, $\Sigma$, $\mathbf A$) with $S^p$ and $\Sigma$ as above satisfying
conditions ($\Sigma 1$), ($\Sigma 2$) and (S).
The datum $\mathbf A$ is a multiset of functionals on $\Z\Sigma$ related to the simple roots
in $\Sigma$ with some extra conditions (\textit{see} Section~2.1
in \cite{Lu01} for details).

A spherical system without simple spherical roots is thus
a spherical system in Luna's sense of shape $(S^p, \Sigma, \emptyset)$.

\textit{By abuse of language, we shall say that a couple is a
spherical system whenever it is a spherical system without simple spherical roots.}

\subsection{Colours associated to a spherical system}

The set of colours $\Delta$ of a given spherical system $(S^p,\Sigma)$ is defined as
$$
\Delta=\left(S\setminus S^p\right)/\sim
$$
with $\alpha\sim\beta$ whenever $\alpha\perp\beta$ and $\alpha+\beta\in\Sigma$.

To avoid any confusion, the elements of $\Delta$ are denoted rather by $D_\alpha$ than by $\alpha$.
Hence, we have $D_\alpha=D_\beta$ if $\alpha\sim\beta$.

Define a $\mathbb Z$-linear map $\rho\colon \mathbb Z\Delta\rightarrow(\mathbb Z\Sigma)^\ast$ as follows, for all $\gamma\in\mathbb Z\Sigma$.
$$
\left\langle \rho(D_\alpha),\gamma\right\rangle =
\left \{
\begin{array} {ll}
\frac{1}{2}\left\langle\alpha^\vee,\gamma \right\rangle & \quad \mbox{if  $2\alpha\in\Sigma$}
 \\ 
\left\langle\alpha^\vee,\gamma \right\rangle & \quad \mbox{otherwise.} 
\end{array}
\right.
$$

\subsection{The spherical system of a wonderful variety}

Let $X$ be a wonderful $G$-variety and $\Delta_X$ be its set of colours, then $\mathrm{Pic}(X)\cong\mathbb Z\Delta_X$.
Let $P_X$ be the stabiliser of the colours of $X$.
The colours being $B$-stable, the subgroup $P_X$ is a parabolic subgroup containing $B$.
Recall that such parabolic subgroups are in correspondence with subsets of the set $S$ of simple roots.
Denote by $S^p_X$ the set of simple roots associated to $P_X$.

Suppose that the set $\Sigma_X$ of spherical roots of $X$ does not contain simple roots.

Define the map $\rho_X\colon\mathbb Z\Delta_X\to\Xi_X^\ast$ by
$\langle\rho_X(D),\gamma\rangle=v_D(f_\gamma)$, where $v_D$ is the
valuation associated to the divisor $D$ and $f_\gamma$ is the
$B$-eigenvector of weight $\gamma$ in $\mathbb C(X)$ (uniquely
determined up to a scalar). 

The pair $(S^p_X,\Sigma_X)$ is a spherical system
for $G$ and $\Delta_X$ is its  set of colours. 
It is usually referred as \textsl{the spherical system of $X$}; 
by analogy the spherical system of a wonderful homogeneous space means that of the corresponding wonderful embedding.
Further the map $\rho_X$ is the map $\rho$ defined in the previous section. 
\textit{See} Section~7 in \cite{Lu01} for details and for the more general case of an arbitrary wonderful variety.

\begin{remark}\label{rmk}
A wonderful variety without simple spherical roots with selfnormalising generic stabiliser is strict. This follows from the combinatorial characterisation of the spherical systems of the selfnormalising wonderful subgroups, \textit{see} for example \cite{Lo}, Theorem 2.
\end{remark}

\subsection{Luna's diagrams}\label{Luna's diagrams}
Let $G$ be of adjoint type. Following \cite{Lu01} (Section~4.1), one can assign to each spherical system a diagram built on the Dynkin diagram of $G$.
A colour $D=D_\alpha$ is represented by a circle drawn under (resp. around) the vertex $\alpha$
if $2\alpha\in\Sigma$ (resp. otherwise).
Whenever $D=D_\alpha=D_\beta$, we join the corresponding circles by a line.
The spherical roots are represented by shadowing some of the above circles or adding some zig-zag line or some number ``2'' as below; more specifically, we list in the following the rank 1 spherical systems with $\supp(\Sigma)=S$, as in \cite{W}, with their corresponding diagrams. 
For convenience, we also provide a labeling of these spherical systems inspired by that introduced in \cite{Lu01}.



\bigbreak\noindent{Type $\mathsf A$}

\rankonecase{a(n)} $n\geq2$, $S^p=S\setminus\{\alpha_1,\alpha_n\}$, $\Sigma=\{\alpha_1+\dots+\alpha_n\}$.
\[\begin{picture}(6000,600)\put(0,0){\usebox{\mediumam}}\end{picture}\]

\rankonecase{a'(1)}, $S^P=\emptyset$, $\Sigma=\{2\alpha_1\}$.
\[\begin{picture}(600,900)\put(0,0){\usebox{\aprime}}\end{picture}\]

\rankonecase{aa(1,1)}, $S^p=\emptyset$, $\Sigma=\{\alpha_1+\alpha'_1\}$.
\[\begin{picture}(4200,1050)\multiput(0,0)(3600,0){2}{\put(300,750){\circle*{150}}\put(300,750){\circle{600}}\put(300,0){\line(0,1){450}}}\put(300,0){\line(1,0){3600}}\end{picture}\]

\rankonecase{d(3)}, $S^p=\{\alpha_1,\alpha_3\}$, $\Sigma=\{\alpha_1+2\alpha_2+\alpha_3\}$.
\[\begin{picture}(3600,600)\multiput(0,300)(1800,0){2}{\usebox{\segm}}\put(1500,0){\usebox{\GreyCircle}}\end{picture}\]

\bigbreak\noindent{Type $\mathsf B$}

\rankonecase{b(n)} $n\geq2$, $S^p=S\setminus\{\alpha_1\}$, $\Sigma=\{\alpha_1+\dots+\alpha_n\}$.
\[\begin{picture}(7500,600)\put(0,0){\usebox{\shortbm}}\end{picture}\]

\rankonecase{b'(n)} $n\geq2$, $S^p=S\setminus\{\alpha_1\}$, $\Sigma=\{2\alpha_1+\dots+2\alpha_n\}$.
\[\begin{picture}(7500,1200)\put(0,0){\usebox{\shortbprimem}}\end{picture}\]

\rankonecase{b^\ast(n)} $n\geq2$, $S^p=S\setminus\{\alpha_1\alpha_2\}$, $\Sigma=\{\alpha_1+\dots+\alpha_n\}$.
\[\begin{picture}(7800,600)\put(0,0){\usebox{\shortbsecondm}}\end{picture}\]

\rankonecase{b^{\ast\ast}(3)}, $S^p=\{\alpha_1,\alpha_2\}$, $\Sigma=\{\alpha_1+2\alpha_2+3\alpha_3\}$.
\[\begin{picture}(3900,600)\put(0,0){\usebox{\bthirdthree}}\end{picture}\]

\bigbreak\noindent{Type $\mathsf C$}

\rankonecase{c(n)} $n\geq3$, $S^p=S\setminus\{\alpha_2\}$, $\Sigma=\{\alpha_1+2\alpha_2+\dots+2\alpha_i+\dots+2\alpha_{n-1}+\alpha_n\}$.
\[\begin{picture}(9000,600)\put(0,0){\usebox{\shortcm}}\end{picture}\]

\rankonecase{c^\ast(n)} $n\geq3$, $S^p=S\setminus\{\alpha_1,\alpha_2\}$, $\Sigma=\{\alpha_1+2\alpha_2+\dots+2\alpha_i+\dots+2\alpha_{n-1}+\alpha_n\}$.
\[\begin{picture}(9300,600)\put(0,0){\usebox{\shortcsecondm}}\end{picture}\]

\bigbreak\noindent{Type $\mathsf D$}

\rankonecase{d(n)} $n\geq4$, $S^p=S\setminus\{\alpha_1\}$, $\Sigma=\{2\alpha_1+\dots+2\alpha_i+\dots+2\alpha_{n-2}+\alpha_{n-1}+\alpha_n\}$.
\[\begin{picture}(6900,2400)\put(0,0){\usebox{\shortdm}}\end{picture}\]

\bigbreak\noindent There are no spherical roots with support of type $\mathsf E$.

\bigbreak\noindent{Type $\mathsf F$}

\rankonecase{f(4)}, $S^p=\{\alpha_1,\alpha_2,\alpha_3\}$, $\Sigma=\{\alpha_1+2\alpha_2+3\alpha_3+2\alpha_4\}$.
\[\begin{picture}(5700,600)\put(0,0){\usebox{\ffour}}\end{picture}\]

\bigbreak\noindent{Type $\mathsf G$}

\rankonecase{g(2)}, $S^p=\{\alpha_2\}$, $\Sigma=\{2\alpha_1+\alpha_2\}$.
\[\begin{picture}(2100,600)\put(0,0){\usebox{\gtwo}}\end{picture}\]

\rankonecase{g'(2)}, $S^p=\{\alpha_2\}$, $\Sigma=\{4\alpha_1+2\alpha_2\}$.
\[\begin{picture}(2100,1200)\put(0,0){\usebox{\gprimetwo}}\end{picture}\]

\rankonecase{g^\ast(2)}, $S^p=\emptyset$, $\Sigma=\{\alpha_1+\alpha_2\}$.
\[\begin{picture}(2400,600)\put(0,0){\usebox{\gsecondtwo}}\end{picture}\]

\bigbreak\noindent For notational convenience, set $d(2)=aa(1,1)$, $b'(1)=a'(1)$, $c^\ast(2)=b^\ast(2)$.


\section{Combinatorial dictionary of wonderful varieties}\label{CombinatorialDictionary}

In the following we briefly recall the correspondence established by Luna
between geometrical features of a wonderful variety and some properties of its spherical system.
The statements are reported only in the setting of spherical systems without simple roots.
The proofs rely mainly on the theory of homogeneous embeddings
developed in \cite{LV}
in terms of combinatorial objects, the so-called coloured fans,  which generalise the notion of fan associated to a toric variety.
We shall not recall the proofs; \textit{see} Section~3 in \cite{Lu01} for details.

In the remainder, $\Delta$ is the set of colours of a given spherical system $(S^p,\Sigma)$. 

Further, we shall denote by $\supp \gamma$ the support of a spherical root $\gamma$ which
is defined as the set of simple roots $\alpha$ such that $\gamma=\sum n_\alpha \alpha$ with $n_\alpha\neq 0$.
More generally the support of a set of spherical roots is defined as the union of the supports of its elements.

\subsection{Localisation}

The \textit{localisation of $(S^p,\Sigma)$ at $S'$} for $S'\subset S$ is the spherical system $(S^p\cap S', \Sigma')$ where
$\Sigma'$ is the set of spherical roots in $\Sigma$ whose support is contained in $S'$.

Given a wonderful variety $X$ with spherical system $(S^p,\Sigma)$ and $S'$ a subset of $S$.
Consider the Levi subgroup $L=P_{S'}\cap P^-_{S'}$.
Let $C'$ be the connected centre of $L$ and $z$ the unique point of $X$ fixed by the Borel subgroup $B^-$.
The connected component $X^{S'}$ of the fixed point set $X^{C'}$ containing $z$ is a wonderful $L$-variety having
the localisation of $(S^p,\Sigma)$ at $S'$ as its spherical system.

\subsection{Quotient}

\smallbreak
A subset $\Delta'$ of $\Delta$ is said to be \textit{distinguished} 
if there exists $\phi\in\mathbb Z_{>0}\Delta'$
such that 
\begin{equation*}
\langle\rho(\phi),\gamma\rangle\geq 0\quad\mbox{ for all }\gamma\in\Sigma.
\end{equation*}

For a given distinguished subset $\Delta'$ of $\Delta$,
one defines:
\smallbreak
\noindent\enspace
$S^p/\Delta'=S^p\cup\{\alpha\in S\colon D_\alpha\in\Delta'\}$ 
\smallbreak
\noindent\enspace
$\Sigma/\Delta'$ as the set of indecomposable elements of the semigroup
given by the elements in $\mathbb Z_{\geq 0}\Sigma$
which are annihilated by $\rho(D)$ for each $D$ in $\Delta'$. 

If the couple ($S^p/\Delta'$, $\Sigma/\Delta'$) is a spherical system, it is called the \textit{quotient spherical system of $(S^p,\Sigma)$ by $\Delta'$}.

Given a wonderful variety $X$, let $(S^p,\Sigma)$ and $\Delta$ be respectively its spherical system and its set of colours.
If $\varphi\colon X\to X'$ is a surjective $G$-morphism  and $X'$ is a wonderful $G$-variety,
we shall denote by $\Delta_\varphi$ the set of colours of $X$ that are mapped dominantly onto $X'$.

\begin{proposition}[\cite{Lu01}, Proposition 3.3.2]\label{morphisms}
\smallbreak
\noindent{\rm(i)}\enspace
The set $\Delta_\varphi$ is a distinguished subset of $\Delta_X$.
Further, if the morphism $\varphi$ has connected fibers
then $X'$ has $(S^p/\Delta_\varphi,\Sigma/\Delta_\varphi)$ as spherical system.
\smallbreak
\noindent{\rm(ii)}\enspace
Conversely, let $\Delta'$ be a distinguished subset of $\Delta$ with quotient spherical system $(S^p/\Delta',\Sigma/\Delta')$.
Then there exist a unique (up to isomorphism) wonderful variety with spherical system $(S^p/\Delta',\Sigma/\Delta')$
as well as a unique surjective morphism $\varphi\colon X\rightarrow X'$ with connected fibers such that $\Delta_\varphi=\Delta'$.
\end{proposition}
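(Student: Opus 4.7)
\smallskip

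The plan is to treat (i) and (ii) separately, in both cases using the Luna--Vust dictionary between wonderful embeddings and combinatorial data on $\mathrm{Hom}(\Xi_X,\mathbb{Q})$.

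For (i), I would begin by pulling back $B$-eigenfunctions: since $\varphi$ is dominant and $G$-equivariant, $\varphi^{*}$ embeds $\mathbb{C}(X')^{(B)}\hookrightarrow\mathbb{C}(X)^{(B)}$ and induces an inclusion of lattices $\Xi_{X'}\hookrightarrow\Xi_X$. If $D\in\Delta_\varphi$, then $\varphi(D)=X'$, so for every $f_\gamma\in\mathbb{C}(X')^{(B)}$ the rational function $\varphi^{*}f_\gamma$ is invertible at the generic point of $D$; hence $v_D(\varphi^{*}f_\gamma)=0$, i.e.\ $\rho_X(D)$ annihilates $\Xi_{X'}$. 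Conversely, a $G$-invariant valuation on $\mathbb{C}(X)$ restricts to one on $\mathbb{C}(X')$, so the natural map $\mathrm{Hom}(\Xi_X,\mathbb{Q})\to\mathrm{Hom}(\Xi_{X'},\mathbb{Q})$ sends $\mathcal{V}_X$ onto $\mathcal{V}_{X'}$. From these two facts one reads off that $\Sigma_{X'}$ consists exactly of the primitive elements of $\Xi_{X'}$ lying in $\mathbb{Z}_{\geq 0}\Sigma_X$ and annihilated by $\rho_X(\Delta_\varphi)$, which by definition is $\Sigma/\Delta_\varphi$. The distinguishedness of $\Delta_\varphi$ would then be obtained by pulling back a very ample $G$-linearised line bundle on $X'$: since $\varphi^{-1}(\Delta_{X'})=\Delta_X\setminus\Delta_\varphi$, one produces a class of the form $\sum_{D\in\Delta_X\setminus\Delta_\varphi}a_DD$ with the right positivity, and then, using the relation in $\mathrm{Pic}(X)\cong\mathbb{Z}\Delta_X$ coming from $B$-semi-invariant regular functions on the open orbit, one translates this into an element $\phi\in\mathbb{Z}_{>0}\Delta_\varphi$ satisfying $\langle\rho_X(\phi),\gamma\rangle\geq 0$ for every $\gamma\in\Sigma_X$. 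Finally, the connected-fibre hypothesis ensures that $\alpha\in S^{p}_{X'}$ iff $P_\alpha$ fixes every colour of $X'$, which by the pullback description of colours amounts to $\alpha\in S^p_X\cup\{\alpha:D_\alpha\in\Delta_\varphi\}$, i.e.\ $S^{p}_{X'}=S^{p}/\Delta_\varphi$.

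For (ii), the construction of $\varphi$ starts from any witness $\phi\in\mathbb{Z}_{>0}\Delta'$ of the distinguishedness condition. I would consider the $G$-linearised effective Cartier divisor $E_\phi=\sum_{D\in\Delta'}\phi(D)D$, whose associated line bundle $L_\phi$ carries a canonical $B$-semi-invariant section. For $N$ sufficiently divisible, the condition $\langle\rho(\phi),\gamma\rangle\geq 0$ for all $\gamma\in\Sigma$ makes $L_\phi^{\otimes N}$ globally generated (this is where the simplicial structure of $\mathcal{V}_X$ and the equivariant theory of line bundles on wonderful varieties are used); the resulting $G$-morphism to a projective $G$-space, followed by Stein factorisation, yields the candidate $\varphi\colon X\to X'$ with connected fibres. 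I would then check that $X'$ is wonderful (smoothness and the orbit structure follow from the fact that $\varphi$ contracts precisely the orbits whose local coloured data involves $\Delta'$, while preserving normal crossing boundary divisors in bijection with $\Sigma/\Delta'$), and that $\Delta_\varphi=\Delta'$ by tracing which divisors dominate $X'$ through the sections of $L_\phi$. Applying (i) to this $\varphi$ then identifies the spherical system of $X'$ with $(S^{p}/\Delta',\Sigma/\Delta')$.

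Uniqueness is comparatively formal: two wonderful varieties with the same spherical system are isomorphic (Losev's theorem, quoted in Section~\ref{uniqueness}), and two $G$-morphisms $X\to X'$ with connected fibres that induce the same $\Delta_\varphi$ differ by an automorphism of $X'$ fixing the open orbit, hence coincide; alternatively, both $X'$ and $\varphi$ can be recovered from $\Delta_\varphi$ via the Luna--Vust coloured-fan description of embeddings of the wonderful homogeneous space.

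I expect the main obstacle to sit in part (ii), specifically in showing that the Stein factorisation $X'$ really is wonderful with the predicted combinatorial invariants: global generation of $L_\phi^{\otimes N}$ and the identification of the contracted orbits with the combinatorial prescription both require a careful passage through the coloured-fan dictionary of \cite{LV}, and the check that no spurious simple spherical roots appear in $\Sigma/\Delta'$ must be done using condition $(\Sigma 1)$ and the hypothesis that $(S^{p}/\Delta',\Sigma/\Delta')$ is itself a spherical system.
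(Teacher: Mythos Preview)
The paper does not give its own proof of this proposition: it is quoted from \cite{Lu01} (Proposition~3.3.2), and the surrounding text at the beginning of Section~\ref{CombinatorialDictionary} explicitly says ``We shall not recall the proofs; \textit{see} Section~3 in \cite{Lu01} for details.'' So there is no in-paper argument to compare yours against.

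A brief remark on your outline versus Luna's original argument is still worthwhile. Your treatment of part~(i) is essentially in line with the standard proof. For part~(ii), however, Luna does not build $\varphi$ via a globally generated line bundle and Stein factorisation. He works directly inside the Luna--Vust embedding theory: a distinguished subset $\Delta'$ determines a coloured cone $(\mathcal C_{\Delta'},\Delta')$, hence a $G$-orbit in a suitable embedding of $G/H$; its generic stabiliser $H'\supset H$ gives a spherical homogeneous space $G/H'$ whose combinatorial invariants are computed to be $(S^p/\Delta',\Sigma/\Delta')$, and $X'$ is then the unique wonderful embedding of $G/H'$. The morphism $\varphi$ is the one induced by $G/H\to G/H'$, and both the wonderfulness of $X'$ and the uniqueness of $\varphi$ drop out of the coloured-fan dictionary rather than from a separate smoothness analysis of a Stein image. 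In particular, invoking Losev's theorem for the uniqueness of $X'$ is unnecessary (and anachronistic relative to \cite{Lu01}): once $X'$ is realised as the wonderful embedding of a fixed $G/H'$, its uniqueness is the elementary fact that a homogeneous space has at most one wonderful compactification. Your approach can be made to work, but the delicate point you identify---checking that the Stein factorisation is genuinely wonderful---is exactly what the coloured-fan route sidesteps.
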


A distinguished subset $\Delta'$ of $\Delta$ is said to be \textit{smooth} 
if $\Sigma/\Delta'$ is a subset of $\Sigma$.

\begin{proposition}[\cite{Lu01}, Proposition 3.3.3]
Let $\varphi\colon X\rightarrow X'$ be a morphism with connected fibers between two wonderful $G$-varieties.
The morphism $\varphi$ is smooth if and only if the subset $\Delta_\varphi$ is smooth.
\end{proposition}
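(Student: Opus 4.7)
The plan is to reduce the assertion to a local computation on toric slices via the local structure theorem for wonderful varieties. Let $z\in X$ denote the unique $B^-$-fixed point (lying in the unique closed $G$-orbit $Y\subset X$), and set $z'=\varphi(z)\in X'$. Since the non-smooth locus of $\varphi$ is closed and $G$-stable, and since every non-empty closed $G$-stable subset of $X$ meets $Y$, it suffices to decide whether $\varphi$ is smooth at $z$; if it is, $G$-homogeneity propagates smoothness to $Y$, which lies in the closure of every other $G$-orbit.

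Around $z$, the local structure theorem provides a $B^-$-stable affine open neighborhood $X_0\subset X$ of $z$ with an isomorphism $X_0\cong R_u(P_X^-)\times S$, where $S\cong\mathbb{A}^r$ is a $T$-stable affine slice through $z$ whose coordinate ring is freely generated by $B$-eigenvectors $\{x_\sigma:\sigma\in\Sigma\}$ of $T$-weights $-\sigma$; this uses that $\Sigma$ is a $\mathbb{Z}$-basis of $\Xi_X$. Analogously $X'_0\cong R_u(P_{X'}^-)\times S'$, with $S'\cong\mathbb{A}^{r'}$ carrying coordinates $\{y_{\sigma'}:\sigma'\in\Sigma/\Delta_\varphi\}$. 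The colours of $X'$ lift to a subset of the colours of $X$, so $P_X\subseteq P_{X'}$ and hence $R_u(P_{X'}^-)\subseteq R_u(P_X^-)$; on the unipotent factors $\varphi$ therefore induces a smooth projection. Consequently, smoothness of $\varphi$ at $z$ reduces to smoothness at the origin of the induced $T$-equivariant map $\psi\colon S\to S'$.

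Since $\psi$ is $T$-equivariant and sends origin to origin, $\psi^*(y_{\sigma'})$ is a $B$-eigenvector on $S$ of weight $-\sigma'$. The $B$-eigenvectors on $S\cong\mathbb{A}^r$ are exactly the monomials in the $x_\sigma$, and by the very definition of the quotient spherical system each $\sigma'\in\Sigma/\Delta_\varphi$ admits a unique expression
$$\sigma'=\sum_{\sigma\in\Sigma}n_\sigma(\sigma')\,\sigma,\qquad n_\sigma(\sigma')\in\mathbb{Z}_{\geq 0},$$
whence $\psi^*(y_{\sigma'})=c_{\sigma'}\prod_\sigma x_\sigma^{n_\sigma(\sigma')}$ with $c_{\sigma'}\neq 0$ (surjectivity of $\varphi$ prevents the pull-back from vanishing). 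A monomial map of this form is smooth at the origin if and only if, for each $\sigma'$, exactly one $n_\sigma(\sigma')$ equals $1$ and the remaining ones vanish, i.e.\ if and only if every $\sigma'\in\Sigma/\Delta_\varphi$ already lies in $\Sigma$. This is precisely the condition that $\Delta_\varphi$ is smooth.

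The main technical hurdle is the justification of the local structure around $z$ and the resulting identification of $\psi$ as a monomial map in the claimed slice coordinates; once this is in place, the smoothness criterion for monomial maps at the origin follows from a direct computation of the Jacobian, and the equivalence with the combinatorial notion of a smooth distinguished subset is immediate.
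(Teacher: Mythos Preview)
The paper does not prove this proposition; it is quoted from \cite{Lu01}, with the explicit disclaimer at the beginning of Section~\ref{CombinatorialDictionary} that ``We shall not recall the proofs; \textit{see} Section~3 in \cite{Lu01} for details.'' Your argument is the natural one and is essentially the approach of Luna's original proof: reduce to the $B^-$-fixed point via $G$-equivariance and closedness of the non-smooth locus, linearise using the local structure theorem, and translate smoothness into the Jacobian criterion for a monomial map on the toric slice.

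Two points could be tightened. First, the sentence ``on the unipotent factors $\varphi$ therefore induces a smooth projection'' is imprecise, since $R_u(P_{X'}^-)\subset R_u(P_X^-)$ is a subgroup rather than a quotient, and the local charts do not a priori give you a well-defined map $S\to S'$. The clean fix is to argue directly on tangent spaces: $\varphi$ restricts to the smooth fibration $Y\to Y'$ of closed orbits, so $d\varphi_z|_{T_zY}$ surjects onto $T_{z'}Y'$, and since $d\varphi_z(T_zY)\subset T_{z'}Y'$ one is reduced to surjectivity of the induced map on normal spaces $T_zX/T_zY\to T_{z'}X'/T_{z'}Y'$, which carry precisely the $T$-weights $\Sigma$ and $\Sigma/\Delta_\varphi$. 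Second, the non-vanishing $c_{\sigma'}\neq 0$ and the exponents $n_\sigma(\sigma')$ are most transparently obtained not from ``surjectivity of $\varphi$'' but from the divisorial pull-back: $\varphi^{-1}$ of the $G$-stable prime divisor $\{y_{\sigma'}=0\}$ is a $G$-stable divisor in $X$, hence a non-negative combination of the boundary divisors $\{x_\sigma=0\}$, with multiplicities read off from the boundary valuations. With these adjustments your proof is complete.
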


\subsection{Parabolic induction}

Let $P$ be a parabolic subgroup of $G$. Suppose that $X$ is a wonderful $G$-variety and there exists a morphisms $X\rightarrow G/P$ whose fiber $Y$ over $P/P$ is acted on trivially by the radical of $P$. Then $Y$ is a wonderful variety (for a Levi subgroup of $P$) and $X$ is isomorphic to $G\times_P Y$, the fiber bundle over $G/P$ defined as the quotient of $G\times Y$ by the $P$-action
$$
p.(g,y)=(g.p^{-1}, p.y) \quad\mbox{ for  $p\in P$ and $(g,y)\in G\times Y$}.
$$

The variety $X$ is then said to be obtained by \textit{parabolic induction} from $Y$ by $P$. 

A subset $\Delta'$ of $\Delta$ is said to be \textit{homogeneous} if it is distinguished and such that $\Sigma/\Delta'=\emptyset$.
Denote by $\underline{\emptyset}(\Delta)$ the set given by the $D_\alpha$'s with $\alpha\in\supp(\Sigma)$.
Note that subsets of $\Delta$ containing $\underline{\emptyset}(\Delta)$ are homogeneous.

\begin{proposition}[\cite{Lu01}, Propositions 3.3.3 and 3.4]\label{P4} 
Given a wonderful $G$-variety $X$, let $(S^p,\Sigma)$ be its spherical system and $\Delta$ its set of colours. 

The homogeneous subsets of $\Delta$ are in one-to-one correspondence with the morphisms $\varphi\colon X\rightarrow G/P$ where $P$ is parabolic in $G$.
Further, the subsets of $\Delta$ containing $\underline{\emptyset}(\Delta)$ are in one-to-one correspondence
with the morphisms $\varphi$ where $X$ is obtained by  parabolic induction from $\varphi^{-1}(P/P)$ by $P$.

\end{proposition}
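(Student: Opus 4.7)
The plan is to combine Proposition~\ref{morphisms} with the observation that flag varieties are exactly the wonderful $G$-varieties of rank zero, and then to translate the triviality of the action of the radical of $P$ on a fibre into a combinatorial condition on colours.

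For the first assertion I would start from the fact that any $G$-equivariant morphism $\varphi\colon X\to G/P$ is surjective (the target being a single $G$-orbit); its Stein factorisation $X\to X''\to G/P$ must have $X''\to G/P$ an isomorphism, the map being finite and $G/P$ being simply connected, so $\varphi$ has connected fibres. Proposition~\ref{morphisms}(i) then exhibits $\Delta_\varphi$ as a distinguished subset and identifies the spherical system of $G/P$ with $(S^p/\Delta_\varphi,\Sigma/\Delta_\varphi)$; since $G/P$ has rank zero we get $\Sigma/\Delta_\varphi=\emptyset$, i.e.\ $\Delta_\varphi$ is homogeneous. Conversely, a homogeneous $\Delta'$ gives, via Proposition~\ref{morphisms}(ii), a surjective $G$-morphism onto a wonderful $X'$ with $\Sigma_{X'}=\emptyset$, and any such $X'$ must be a flag variety $G/P$, with the Levi of $P$ having $S^p/\Delta'$ as its simple roots.

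For the second assertion, observe that a $G$-equivariant morphism $\varphi\colon X\to G/P$ always realises $X$ as the homogeneous bundle $G\times_P Y$ with $Y=\varphi^{-1}(P/P)$, by the standard description of $G$-varieties mapping to $G/P$; the problem is therefore exactly to decide when the radical of $P$ acts trivially on $Y$. I would first identify $\Delta\setminus\Delta_\varphi$ with the set of colours pulled back from $G/P$, naturally indexed by the simple roots in $S\setminus(S^p/\Delta_\varphi)$. This translates the inclusion $\underline{\emptyset}(\Delta)\subseteq\Delta_\varphi$ into $\supp(\Sigma)\subseteq S^p/\Delta_\varphi$, namely the condition that every spherical root of $X$ be supported in the simple roots of the Levi of $P$.

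The final step matches this combinatorial condition with parabolic induction. One direction is immediate via the localisation recalled in Section~\ref{CombinatorialDictionary}: if $X=G\times_P Y$ is a parabolic induction, then $Y$ identifies with $X^{S^p/\Delta_\varphi}$ and inherits the full spherical root system of $X$, so the supports lie in the simple roots of the Levi. The converse is the main obstacle: assuming $\supp(\Sigma)\subseteq S^p/\Delta_\varphi$, one has to show that the root subgroups corresponding to the radical of $P$ fix $Y$ pointwise. The argument I envisage is that these root subgroups do not appear among the directions normal to the closed orbit of $X$—those directions being spanned precisely by the spherical roots of $X$—so the radical acts trivially near the closed orbit; the wonderful-embedding structure then forces this triviality to spread to all of $Y$ by $G$-equivariance and density of the open orbit.
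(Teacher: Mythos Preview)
The paper does not prove this proposition: it is quoted from \cite{Lu01} (Propositions~3.3.3 and~3.4), and Section~\ref{CombinatorialDictionary} explicitly says that the proofs, which rely on the coloured-fan machinery of \cite{LV}, are not reproduced. So there is no ``paper's own proof'' to compare against.

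Your outline for the first assertion is essentially right, but the Stein-factorisation step is not justified by simple connectedness alone: a finite surjective map between normal varieties onto a simply connected target need not be an isomorphism unless it is \'etale (think of a degree-two map $\mathbb P^1\to\mathbb P^1$). What saves you is $G$-equivariance: the connected parabolic $P$ acts on the finite fibre over $P/P$, hence fixes each of its points, so every point has stabiliser exactly $P$ (parabolics are self-normalising) and $X''$ is a connected disjoint union of copies of $G/P$, i.e.\ $X''\cong G/P$.

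For the second assertion your translation $\underline{\emptyset}(\Delta)\subseteq\Delta_\varphi\iff\supp(\Sigma)\subseteq S^p/\Delta_\varphi$ is correct, and the ``easy'' direction via localisation is fine. The converse, however, is where your sketch is thin. Saying that the root subgroups of $P^u$ ``do not appear among the directions normal to the closed orbit'' describes the $T$-weights of $T_yX/T_yY$; it does not by itself force $P^u$ to act trivially on $Y$. You would need an argument that, since $Y$ is $P$-stable and complete and the induced $P^u$-action on the tangent space at the $B^-$-fixed point is trivial (this is what the support condition gives), the unipotent group $P^u$ fixes $Y$ pointwise---for instance via a Bia\l ynicki-Birula/linearisation argument at that fixed point, or, as Luna does, by going back to the coloured-fan description of the morphism. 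As written, the last paragraph asserts the conclusion rather than proving it.
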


Therefore, a wonderful variety $X$ can be obtained by (non-trivial) parabolic induction if and only if $\supp(\Sigma_X)\not\perp (S\setminus\supp(\Sigma_X))$. 

A spherical system is said to be \textit{cuspidal} if $\supp(\Sigma)=S$.

\subsection{Decomposable spherical systems}

Let $\Delta_1$ and $\Delta_2$ be two distinguished non-empty subsets of $\Delta$. 
The set $\Delta_3=\Delta_1\cup \Delta_2$ is thus obviously distinguished.
The subsets $\Delta_1$ and $\Delta_2$ \textit{decompose} the spherical system $(S^p,\Sigma)$ if: 
\smallbreak\noindent\rm{(i)}\enspace 
$\Delta_1 \cap \Delta_2 = \emptyset$, 
\smallbreak\noindent\rm{(ii)}\enspace
$(\Sigma\setminus(\Sigma/\Delta_1)) \cap
(\Sigma\setminus(\Sigma/\Delta_2))=\emptyset$, 
\smallbreak\noindent\rm{(iii)}\enspace $((S^p/\Delta_1)\setminus S^p)\perp ((S^p/\Delta_2)\setminus S^p)$, 
\smallbreak\noindent\rm{(iv)}\enspace $\Delta_1$ or $\Delta_2$ is smooth.

\begin{proposition} [\cite{Lu01}, Proposition 3.5] 
Suppose $\Delta_1$ and $\Delta_2$ decompose a given spherical system $(S^p,\Sigma)$.
Assume also that there exists a wonderful $G$-variety $X_i$
with spherical system $(S^p/\Delta_i,\Sigma/\Delta_i)$ for $i=1,2,3$.
Then $X_1\times_{X_3}X_2$ is a wonderful $G$-variety with spherical system $(S^p,\Sigma)$.
\end{proposition}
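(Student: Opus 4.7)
The plan is to construct the claimed wonderful variety as the fibre product $Y = X_1 \times_{X_3} X_2$. First, since $\Delta_1$ and $\Delta_2$ are both contained in the distinguished subset $\Delta_3 = \Delta_1 \cup \Delta_2$, Proposition \ref{morphisms}(ii) applied to the quotient spherical systems produces $G$-equivariant morphisms $f_i \colon X_i \to X_3$ with connected fibres for $i = 1,2$; here one uses that $\Delta_3 \setminus \Delta_i$ is still distinguished when viewed inside the set $\Delta_{X_i} = \Delta \setminus \Delta_i$ of colours of $X_i$. This allows the fibre product $Y$, equipped with its diagonal $G$-action, to be formed as a closed $G$-subvariety of $X_1 \times X_2$.

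Next, using condition (iv) of the decomposition I would assume without loss of generality that $\Delta_1$ is smooth, which by the smoothness criterion of Luna (\cite{Lu01}, Proposition 3.3.3) makes $f_1$ a smooth morphism. Its base change $p_2 \colon Y \to X_2$ is therefore smooth with connected fibres, so $Y$ is smooth and irreducible, and completeness is inherited from $X_1 \times X_2$. For the dense $G$-orbit, pick base points $h_i$ in the open $G$-orbit of $X_i$ with $f_1(h_1) = f_2(h_2)$ (available by $G$-equivariance and surjectivity of the $f_i$); the point $y = (h_1, h_2) \in Y$ has stabiliser $H_1 \cap H_2$, and its $G$-orbit is open in $Y$ by a dimension count using the flatness of $f_1$, which yields $\dim Y = \dim X_1 + \dim X_2 - \dim X_3$.

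The remaining task is to identify the spherical system of $Y$ with $(S^p, \Sigma)$, and this is where the other decomposition conditions enter. Condition (i), together with $\Delta_{X_i} = \Delta \setminus \Delta_i$, gives $\Delta_Y = \Delta_{X_1} \cup \Delta_{X_2} = \Delta$, the $X_3$-colours appearing in both $\Delta_{X_1}$ and $\Delta_{X_2}$ and being identified as pullbacks from $X_3$ along the two projections. Condition (iii), combined with $\Delta_1 \cap \Delta_2 = \emptyset$, forces $S^p_Y = S^p_{X_1} \cap S^p_{X_2} = (S^p/\Delta_1) \cap (S^p/\Delta_2) = S^p$. Condition (ii) yields $\Sigma_Y = \Sigma_{X_1} \cup \Sigma_{X_2} = \Sigma$, since every spherical root of $Y$ pulls back from $X_1$ or from $X_2$ and condition (ii) prevents the lost spherical roots from being counted twice.

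The hard part will be verifying that $Y$ satisfies the full wonderful axioms beyond smoothness, irreducibility, and completeness: one must check that the $G$-stable prime divisors of $Y$ are smooth with normal crossings and that the $G$-orbits of $Y$ correspond bijectively to subsets of these divisors via the incidence condition of Definition~\ref{...} (the one opening Section~2). The smoothness of $f_1$ is decisive here, because it allows one to describe $Y$ \'etale-locally near the unique closed $G$-orbit as a product of the local models of the wonderful structures of $X_1$ and $X_2$, so that the normal-crossings configuration and the orbit stratification are inherited from the factors, thereby justifying the combinatorial computation of the preceding paragraph.
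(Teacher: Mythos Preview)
The paper does not give its own proof of this proposition: it is quoted from \cite{Lu01}, and the opening of Section~\ref{CombinatorialDictionary} says explicitly ``We shall not recall the proofs; \textit{see} Section~3 in \cite{Lu01} for details.'' So there is no in-paper argument to compare your attempt against.

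That said, your fibre-product construction is exactly Luna's approach in \cite{Lu01}, and the use of condition~(iv) to make one projection smooth (hence $Y$ smooth, irreducible, complete) is the key geometric step. Two places in your sketch would need more care in a full write-up. First, the identification of $\Delta_{X_i}$ with $\Delta\setminus\Delta_i$ is not literal, since the colours of a quotient spherical system are defined through its own equivalence relation on $S\setminus(S^p/\Delta_i)$; condition~(iii) is precisely what guarantees compatibility here. Second, the equality $\Sigma_Y=\Sigma_{X_1}\cup\Sigma_{X_2}$ is not correct as stated when $\Delta_2$ is not smooth, since then $\Sigma/\Delta_2$ need not lie inside $\Sigma$; the intended conclusion $\Sigma_Y=\Sigma$ comes rather from combining condition~(ii) with a rank count via the dimension formula of Lemma~\ref{ids}. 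You rightly flag the verification of the full wonderful axioms (normal crossings, orbit stratification) as the technical heart; in Luna's argument this is handled through the Luna--Vust embedding theory of \cite{LV} rather than a direct \'etale-local product description.
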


\section{Classification of strict wonderful varieties}

\begin{theorem}\label{theorem}
Let $G$ be a semisimple group of adjoint type. To any spherical system ($S^p$,$\Sigma$) for $G$, where the set of spherical roots $\Sigma$ contains no simple root, there corresponds a (unique up to $G$-isomorphism) wonderful $G$-variety.
In particular, strict wonderful varieties are in bijective correspondence with strict spherical systems.
\end{theorem}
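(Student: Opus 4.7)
The plan is to separate uniqueness from existence. Uniqueness is Losev's theorem cited in the introduction, and by Remark~\ref{rmk} a strict spherical system will automatically give rise to a strict wonderful variety once existence is established. I would therefore concentrate on existence: producing, for every spherical system $(S^p,\Sigma)$ with $\Sigma\cap S=\emptyset$, a wonderful $G$-variety whose spherical system is $(S^p,\Sigma)$.

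The existence proof should follow the standard reduction-to-primitives strategy enabled by Section~\ref{CombinatorialDictionary}. If $\supp(\Sigma)\subsetneq S$, Proposition~\ref{P4} realises $(S^p,\Sigma)$ by parabolic induction from the localisation to $\supp(\Sigma)$; if $\Delta$ admits a non-trivial decomposition $\Delta_1\sqcup\Delta_2$ in the sense of Proposition~3.5 of \cite{Lu01}, then $(S^p,\Sigma)$ is a fibre product of the wonderful varieties attached to the smaller quotient systems $(S^p/\Delta_i,\Sigma/\Delta_i)$, available by induction on rank and number of colours. Iterating these reductions one is reduced to constructing a wonderful variety for each \emph{primitive} spherical system, i.e.\ a cuspidal system that admits no such non-trivial quotient or decomposition.

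Next I would enumerate all primitive spherical systems without simple spherical roots. Working type by type and using the axioms $(\Sigma 1)$, $(\Sigma 2)$, (S), (R') together with the rank-$1$ list of Section~\ref{Luna's diagrams}, one obtains the finite and explicit list advertised in the paper's introduction. For each item I would exhibit a wonderful variety whose spherical system matches it. The vast majority of cases come from the three families gathered in Section~2: symmetric varieties $G/H$ with $G^\sigma\subseteq H\subseteq N_G(G^\sigma)$, via Cartan's classification of involutions and Vust's computation of their spherical invariants; the wonderful compactifications of spherical nilpotent adjoint orbits classified by Panyushev in \cite{Pa94}; and Luna's model variety of Section~\ref{modelspaces}. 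A handful of residual primitive systems would be realised by ad hoc constructions, typically as appropriate quotients or fibre products of varieties from the above families.

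The main obstacle is twofold. First, the combinatorial enumeration of primitive spherical systems is delicate: one has to control simultaneously $(\Sigma 1)$--(R') together with the absence of non-trivial distinguished subsets of colours, which becomes intricate when the support of $\Sigma$ mixes several Dynkin types. Second, matching each primitive system with a geometric realisation requires computing $S^p$ and the spherical roots of the candidate variety from the explicit data in the appendices, which is a long case-by-case verification; ensuring the list of primitives is complete, and that every computed spherical system is actually the one prescribed, is where the bulk of the work lies.
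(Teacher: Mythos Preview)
Your proposal is essentially correct and follows the same overall architecture as the paper: reduce to primitive spherical systems via parabolic induction and decomposition (Section~\ref{CombinatorialDictionary}), enumerate the primitives, and realise each one geometrically via symmetric spaces, spherical nilpotent orbits, model spaces, and a residue of ad hoc cases. Three refinements are worth noting.

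First, the enumeration of primitives in the paper is not done ``type by type using the axioms'' but through the systematic notion of strong $\Delta$-connectedness and Lemma~\ref{lemma:erasable}: one lists cuspidal strongly $\Delta$-connected spherical systems, determines for each whether it is isolated, erasable, or quasi-erasable as a component of a larger system, and then glues non-isolated pieces in all ways not ruled out by the lemma. This is what makes the combinatorial step tractable and complete.

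Second, the paper singles out a fourth family you did not name: non-symmetric \emph{affine} spherical homogeneous spaces (e.g.\ $Sp(n)\subset SL(n+1)$, $G_2\subset Spin(7)$, $Spin(7)\subset Spin(9)$, \ldots), detected combinatorially by the affinity criterion and accounting for several primitives not covered by the symmetric, nilpotent, or model families.

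Third, your description of the residual constructions as ``quotients or fibre products'' needs care: primitive systems are by definition indecomposable, so fibre products cannot produce them. In the paper the remaining primitives are realised by exhibiting an explicit candidate subgroup $H$ (often a parabolic in a symmetric subgroup, or a finite-index subgroup of a known wonderful $H$), checking it is spherical and selfnormalising, and then pinning down its spherical system by showing it has the required \emph{quotients} (Proposition~\ref{morphisms}) together with the dimension identities of Lemma~\ref{ids}.
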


The above assertion was conjectured by Luna.
His conjecture is stated more generally for any wonderful variety and any spherical system (\textit{see} Appendix~A.2.2 in \cite{Lu07}), and can be suitably generalised to any semisimple group $G$ not necessarily of adjoint type. 
Losev recently proved the uniqueness part of the general conjecture (\cite{Lo}).
We propose an alternative approach in case of strict wonderful varieties (\textit{see} Section~\ref{uniqueness}).

A cuspidal and indecomposable spherical system is called \textit{primitive}.

By the results recalled in Section~\ref{CombinatorialDictionary}, it is enough to prove Theorem~\ref{theorem} for primitive spherical systems.

Primitive spherical systems (without simple spherical roots) and their corresponding wonderful subgroups are listed in Section~\ref{primitiveslist}. 

If $G$ is simply connected and $G/H$ is 
\smallbreak\noindent - \enspace
a symmetric space,
\smallbreak\noindent - \enspace
an adjoint nilpotent orbit of height 3 in $\mathfrak g$
\smallbreak\noindent - \enspace
or a principal model homogeneous space
\smallbreak\noindent
 then $G/H=\prod_i G_i/H_i$ and it appears that, for all $i$, the normaliser of $H_i$ is in the list of primitive cases.

These three families (are not disjoint and) do not cover all the primitive cases. 

There are primitive cases whose corresponding wonderful subgroup 
is the normaliser of some $H$ with $G/H$ a non-symmetric affine
homogeneous space, namely:
\smallbreak\noindent - \enspace $SL(n+1)/Sp(n)$ for $n$ even (this is also a model homogeneous space), 
\smallbreak\noindent - \enspace $Spin(2n+1)/GL(n)$ (here $G/N_G(H)$ is a model homogeneous space for $G$ adjoint), 
\smallbreak\noindent - \enspace $Spin(7)/G_2$, 
\smallbreak\noindent - \enspace $Spin(9)/Spin(7)$, 
\smallbreak\noindent - \enspace $SL(2)\times Sp(2n)/SL(2)\times Sp(2n-2)$, 
\smallbreak\noindent - \enspace $Sp(2n_1)\times Sp(2n_2)/SL(2)\times Sp(2n_1-2)\times Sp(2n_2-2)$, 
\smallbreak\noindent - \enspace $Sp(2n)/GL(1)\times Sp(2n-2)$, 
\smallbreak\noindent - \enspace $Spin(8)/G_2$, 
\smallbreak\noindent - \enspace $G_2/SL(3)$.
\smallbreak\noindent

Furthermore, in the primitive cases listed in
Section~\ref{primitiveslist} as 
\smallbreak\noindent 
(\ref{b''(n)}), (\ref{a(p)+b'(q)}), (\ref{acast(p)+b'(q)}) for $p$ even, (\ref{c''(n)}), (\ref{ca(1+q+1)}), (\ref{aa(1+p+1)+c''(q)}), (\ref{acast(p)+c''(q)}) for $q>2$, (\ref{ds(n)}), (\ref{a(p)+d(q)}), (\ref{acast(p)+d(q)}) for $p$ even, (\ref{fa(1+2+1)}) and  (\ref{fd(4)})
\smallbreak\noindent
the wonderful subgroup is selfnormalising but does not belong to any of the above families.

Finally, there are some primitive cases where the wonderful subgroup is not selfnormalising, namely 
\smallbreak\noindent
(\ref{b(n)}), (\ref{a(p)+b(q)}), (\ref{acast(p)+b(q)}), (\ref{cc(p+q)}) for $q=2$, (\ref{g(2)}).
\smallbreak\noindent
By Remark~\ref{rmk}, these are exactly the non-strict primitive spherical systems (without simple spherical roots).



\subsection{The list of primitive cases}\label{primitiveslist}

To simplify the notation, let us take $G$ simply connected. 
We report below spherical systems for $G/Z_G$, where $Z_G$ is the centre of $G$, hence $\Sigma$ will be included in the root lattice.
Therefore, the generic stabiliser $H$ of a wonderful variety with such a spherical system will always contain $Z_G$. 

After each spherical system we describe the corresponding subgroup $H$ of $G$. Naturally, $G$ is the simply connected group with the given Dynkin diagram.

\subsubsection*{Type $\mathsf A$}\begin{enumerate}

\item $\mathbf{aa(p,p)}$\label{aa(p,p)}, $p\geq1$,
  $S^p=\emptyset$, $\Sigma=\{\alpha_1+\alpha'_1,\dots$,
  $\alpha_i+\alpha'_i,\dots$, $\alpha_p+\alpha'_p\}$. 
$H=SL(p+1)\cdot Z_G$ (symmetric subgroup). 
\[\diagramaapp\]

\item $\mathbf{ao(n)}$\label{ao(n)}, $n\geq1$, $S^p=\emptyset$,
  $\Sigma=\{2\alpha_1,\dots,2\alpha_i,\dots,2\alpha_n\}$. 
$H=SO(n+1)\cdot Z_G$ (symmetric subgroup).  
\[\diagramaon\]

\item $\mathbf{ac(n)}$\label{ac(n)}, $n\geq3$ odd, $S^p=\{\alpha_1,\dots,\alpha_{2i-1},\alpha_{2i+1},\dots,\alpha_n\}$, $\Sigma=$ $\{\alpha_1+2\alpha_2+\alpha_3,\dots$, $\alpha_{2i-1}+2\alpha_{2i}+\alpha_{2i+1},\dots$, $\alpha_{n-2}+2\alpha_{n-1}+\alpha_n\}$.  
$H=Sp(n+1)\cdot Z_G$ (symmetric subgroup).
\[\diagramacn\]

\item $\mathbf{aa(p+q+p)}$\label{aa(p+q+p)}, $n=2p+q$, $p\geq1$,
  $q\geq2$, $S^p=$ $\{\alpha_{p+2}, \dots$, $\alpha_{p+q-1}\}$, $\Sigma=\{\alpha_1+\alpha_n,\dots,\alpha_i+\alpha_{n+1-i},\dots,\alpha_p+\alpha_{p+q+1};\alpha_{p+1}+\dots+\alpha_{p+q}\}$. 
$H=(GL(p+q)\times GL(p+1))\cap G$ (symmetric subgroup).
\[\diagramaapplusqplusp\]

\item $\mathbf{aa'(p+1+p)}$\label{aaast(p+1+p)}, $n=2p+1$, $p\geq1$, $S^p=\emptyset$, $\Sigma=\{\alpha_1+\alpha_n,\dots,\alpha_i+\alpha_{n+1-i},\dots,\alpha_p+\alpha_{p+2};2\alpha_{p+1}\}$. 
$H=N_G(SL(p+1)\times SL(p+1))$ (symmetric subgroup).
\[\diagramaaastpplusoneplusp\]

\item $\mathbf{a(n)}$\label{a(n)}, $n\geq2$, $S^p=\{\alpha_2,\dots,\alpha_{n-1}\}$, $\Sigma=\{\alpha_1+\dots+\alpha_n\}$. 
$H=GL(n)$ (symmetric subgroup).
\[\begin{picture}(6000,600)\put(0,0){\usebox{\mediumam}}\end{picture}\]

\item $\mathbf{ac^\ast(n)}$\label{acast(n)}, $n\geq3$, $S^p=\emptyset$, $\Sigma=\{\alpha_1+\alpha_2,\dots,\alpha_i+\alpha_{i+1},\dots,\alpha_{n-1}+\alpha_n\}$. \begin{itemize}
\item[-] $n$ odd: $H$ is the parabolic subgroup of semisimple type $\mathsf
  C_{(n-1)/2}$ of the symmetric subgroup $Sp(n+1)\cdot Z_G$.
\item[-] $n$ even: $H=Sp(n)\times GL(1)$. 
\end{itemize} 
\[\diagramacastn\]

\primitivetype{Type $\mathsf B$}

\item $\mathbf{bb(p,p)}$\label{bb(p,p)}, $p\geq2$, $S^p=\emptyset$, $\Sigma=\{\alpha_1+\alpha'_1,\dots$, $\alpha_i+\alpha'_i,\dots$, $\alpha_p+\alpha'_p\}$. 
$H=Spin(2p+1)\cdot Z_G$ (symmetric subgroup).
\[\diagrambbpp\]

\item $\mathbf{bo(p+q)}$\label{bo(p+q)}, $n=p+q$, $p\geq1$, $q\geq1$, $S^p=\{\alpha_{p+2},\dots,\alpha_n\}$, $\Sigma=\{2\alpha_1,\dots$, $2\alpha_i,\dots$, $2\alpha_p$; $2\alpha_{p+1}+\dots+2\alpha_n\}$. 
$H=Spin(p+1)\times Spin(2n-p)$ (symmetric subgroup).
\[\diagrambopplusq\]

\item $\mathbf{b(n)}$\label{b(n)}, $n\geq2$, $S^p=S\setminus\{\alpha_1\}$, $\Sigma=\{\alpha_1+\dots+\alpha_n\}$. 
$H=Spin(2n)$ (symmetric subgroup).
\[\begin{picture}(7500,600)\put(0,0){\usebox{\shortbm}}\end{picture}\]

\item $\mathbf{b'(n)}$\label{b'(n)}, $n\geq2$,
  $S^p=S\setminus\{\alpha_1\}$,
  $\Sigma=\{2\alpha_1+\dots+2\alpha_n\}$. 
$H=N_G(Spin(2n))$ (symmetric subgroup).
\[\begin{picture}(7500,1200)\put(0,0){\usebox{\shortbprimem}}\end{picture}\]

\item $\mathbf{b^\ast(n)}$\label{b''(n)}, $n\geq2$,
  $S^p=\{\alpha_2,\dots,\alpha_{n-1}\}$,
  $\Sigma=\{\alpha_1+\dots+\alpha_n\}$. 
$H$ is the parabolic subgroup of semisimple type $\mathsf A_{n-1}$ of the symmetric subgroup $Spin(2n)$.
\[\begin{picture}(7500,600)\put(0,0){\usebox{\shortbsecondm}}\end{picture}\]

\item $\mathbf{bc^\ast(n)}$\label{bcast(n)}, $n\geq3$,
  $S^p=\emptyset$,
  $\Sigma=\{\alpha_1+\alpha_2,\dots,\alpha_i+\alpha_{i+1},\dots,\alpha_{n-1}+\alpha_n\}$.
\begin{itemize}
\item[-] $n$ odd: $H$ is the stabiliser of the line $[e]\in\mathbb
  P(\mathfrak g)$, where $e$ is a nilpotent element in the adjoint
  orbit of characteristic $(10\dots01)$.
\item[-] $n$ even: $H\subset P$, where $P$ is the parabolic subgroup of semisimple type $\mathsf A_{n-1}$ in the symmetric subgroup $Spin(2n)$, $H$ has the same radical and semisimple type $\mathsf C_{n/2}$.
\end{itemize}
\[\diagrambcastn\]

\item $\mathbf{bc'(n)}$\label{bc'(n)}, $n\geq2$, $S^p=\emptyset$, $\Sigma=\{\alpha_1+\alpha_2,\dots,\alpha_i+\alpha_{i+1},\dots,\alpha_{n-1}+\alpha_n;2\alpha_n\}$. $H=N_G(GL(n))$.
\[\diagrambcprimen\]

\item $\mathbf{a(p)+b(q)}$\label{a(p)+b(q)}, $n=p+q$, $p\geq2$,
  $q\geq2$, $S^p=$ $\{\alpha_2$, $\dots$, $\alpha_{p-1}$;
  $\alpha_{p+2}$, $\dots$, $\alpha_n\}$,
  $\Sigma=\{\alpha_1+\dots+\alpha_p,\alpha_{p+1}+\dots+\alpha_n\}$. 
$H$ is the parabolic subgroup of semisimple type $\mathsf A_{p-1}\times \mathsf D_{n-p}$ in the symmetric subgroup $Spin(2n)$, of index $2$ in its normaliser.
\[\diagramBsix\]

\item $\mathbf{a(p)+b'(q)}$\label{a(p)+b'(q)}, $n=p+q$, $p\geq2$,
  $q\geq1$, $S^p=$ $\{\alpha_2$, $\dots$, $\alpha_{p-1}$;
  $\alpha_{p+2}$, $\dots$, $\alpha_n\}$,
  $\Sigma=\{\alpha_1+\dots+\alpha_p,2\alpha_{p+1}+\dots+2\alpha_n\}$.
$H$ is the normaliser of the wonderful subgroup of case (\ref{a(p)+b(q)}).
\[\diagramBsixbis\]

\item $\mathbf{ac^\ast(p)+b(q)}$\label{acast(p)+b(q)}, $n=p+q$,
  $p\geq2$, $q\geq2$, $S^p=\{\alpha_{p+2},\dots,\alpha_n\}$,
  $\Sigma=\{\alpha_1+\alpha_2,\dots,\alpha_i+\alpha_{i+1},\dots,\alpha_{p-1}+\alpha_p;\alpha_{p+1}+\dots+\alpha_n\}$.
$H$ has index $2$ in its normaliser, the wonderful subgroup of case (\ref{acast(p)+b'(q)}).
\[\diagramacastpplusbq\]

\item $\mathbf{ac^\ast(p)+b'(q)}$\label{acast(p)+b'(q)}, $n=p+q$,
  $p\geq2$, $q\geq1$, $S^p=\{\alpha_{p+2},\dots,\alpha_n\}$,
  $\Sigma=\{\alpha_1+\alpha_2,\dots,\alpha_i+\alpha_{i+1},\dots,\alpha_{p-1}+\alpha_p;2\alpha_{p+1}+\dots+2\alpha_n\}$. 
\begin{itemize}
\item[-] $p$ even: $H\subset P$, where $P$ is the parabolic subgroup of semisimple type $\mathsf A_{p-1}\times\mathsf D_{n-p}$ in the symmetric subgroup $Spin(2n)$, $H$ has the same radical, semisimple type $\mathsf C_{p/2}\times\mathsf D_{n-p}$ and is selfnormalising.
\item[-] $p$ odd: $H$ is the stabiliser of the line $[e]\in\mathbb
  P(\mathfrak g)$, where $e$ is a nilpotent element in the adjoint
  orbit of characteristic $(10\dots010\dots0)$, with $\alpha_p(h)=1$.
\end{itemize}
\[\diagramacastpplusbprimeq\]

\item $\mathbf{b^{\ast\ast}(3)}$\label{b'''(3)},
  $S^p=\{\alpha_1,\alpha_2\}$,
  $\Sigma=\{\alpha_1+2\alpha_2+3\alpha_3\}$. $H=G_2\cdot Z_G$.
\[\begin{picture}(3900,600)\put(0,0){\usebox{\bthirdthree}}\end{picture}\]

\item $\mathbf{b^\ast(4)+b^{\ast\ast}(3)}$\label{b''(4)+b'''(3)}, 
  $S^p=\{\alpha_2,\alpha_3\}$,
  $\Sigma=\{\alpha_1+\alpha_2+\alpha_3+\alpha_4,\alpha_2+2\alpha_3+3\alpha_4\}$. $H=Spin(7)\cdot Z_G$.
\[\diagramBthree\]

\primitivetype{Type $\mathsf C$}

\item $\mathbf{cc(p,p)}$\label{cc(p,p)}, $p\geq3$,
  $S^p=\emptyset$, $\Sigma=\{\alpha_1+\alpha'_1,\dots$,
  $\alpha_i+\alpha'_i,\dots$, $\alpha_p+\alpha'_p\}$. 
$H=Sp(2p)\cdot Z_G$ (symmetric subgroup).
\[\diagramccpp\]

\item $\mathbf{co(n)}$\label{co(n)}, $n\geq3$, $S^p=\emptyset$,
  $\Sigma=\{2\alpha_1,\dots,2\alpha_i,\dots,2\alpha_n\}$. 
$H=N_G(GL(n))$ (symmetric subgroup).
\[\diagramcon\]

\item $\mathbf{c(n)}$\label{c(n)}, $n\geq3$,
  $S^p=S\setminus\{\alpha_2\}$,
  $\Sigma=\{\alpha_1+2\alpha_2+\dots+2\alpha_i+\dots+2\alpha_{n-1}+\alpha_n\}$.
$H=SL(2)\times Sp(2n-2)$ (symmetric subgroup).
\[\begin{picture}(9000,600)\put(0,0){\usebox{\shortcm}}\end{picture}\]

\item $\mathbf{cc(p+q)}$\label{cc(p+q)}, $n=p+q$, $p\geq2$ even,
  $q\geq2$, $S^p=\{\alpha_1,\dots$,
  $\alpha_{2i-1},\alpha_{2i+1},\dots$, $\alpha_{p+1}$;
  $\alpha_{p+3},\dots$, $\alpha_n\}$, $\Sigma=$
  $\{\alpha_1+2\alpha_2+\alpha_3,\dots$,
  $\alpha_{2i-1}+2\alpha_{2i}+\alpha_{2i+1},\dots$,
  $\alpha_{p-1}+2\alpha_p+\alpha_{p+1}$;
  $\alpha_{p+1}+2\alpha_{p+2}+\dots+2\alpha_{n-1}+\alpha_n\}$. 
$H=Sp(p+2)\times Sp(2n-p-2)$ (symmetric subgroup).
\[\diagramccpplusq\]

\item $\mathbf{cc'(p+2)}$\label{cc'(p+2)}, $n=p+2\geq4$ even, $S^p=$
  $\{\alpha_1$, $\dots$, $\alpha_{2i-1}$, $\alpha_{2i+1}$, $\dots$,
  $\alpha_{n-1}\}$, $\Sigma=$ $\{\alpha_1+2\alpha_2+\alpha_3,\dots$,
  $\alpha_{2i-1}+2\alpha_{2i}+\alpha_{2i+1},\dots$,
  $\alpha_{n-3}+2\alpha_{n-2}+\alpha_{n-1};2\alpha_{n-1}+2\alpha_n\}$. 
$H=N_G(Sp(n)\times Sp(n))$ (symmetric subgroup).
\[\diagramccprimepplustwo\]

\item $\mathbf{c^\ast(n)}$\label{c''(n)}, $n\geq3$,
  $S^p=\{\alpha_3,\dots,\alpha_n\}$,
  $\Sigma=\{\alpha_1+2\alpha_2+\dots+2\alpha_i+\dots+2\alpha_{n-1}+\alpha_n\}$.
$H$ is the parabolic subgroup of semisimple type $\mathsf C_{n-1}$ in the
symmetric subgroup $SL(2)\times Sp(2n-2)$.
\[\begin{picture}(9300,600)\put(0,0){\usebox{\shortcsecondm}}\end{picture}\]

\item $\mathbf{ca(1+q+1)}$\label{ca(1+q+1)}, $n=q+2$, $q\geq 2$,
  $S^p=\{\alpha_3,\dots,\alpha_q\}$,
  $\Sigma=\{\alpha_1+\alpha_n,\alpha_2+\dots+\alpha_{n-1}\}$.
$H$ is the parabolic subgroup of semisimple type $\mathsf A_{n-2}\times\mathsf A_1$ in the symmetric subgroup $SL(2)\times Sp(2n-2)$.
\[\diagramCfourprime\]

\item $\mathbf{aa(1+p+1)+c^\ast(q)}$\label{aa(1+p+1)+c''(q)}, $n=p+q+1$,
  $p\geq2$, $q\geq2$, $S^p=\{\alpha_3,\dots,\alpha_{p}$;
  $\alpha_{p+4},\dots,\alpha_n\}$,
  $\Sigma=\{\alpha_1+\alpha_{p+2},\alpha_2+\dots+\alpha_{p+1},\alpha_{p+2}+2\alpha_{p+3}+\dots+2\alpha_{n-1}+\alpha_n\}$. 
$H$ is the parabolic subgroup of semisimple type $\mathsf A_1\times \mathsf
A_p\times\mathsf C_{n-p-2}$ in the symmetric subgroup $SL(2)\times Sp(2n-2)$.
\[\diagramaaonepluspplusonepluscsecondq\]

\item $\mathbf{aa(1,1)+c^\ast(n)}$\label{aa(1,1)+c''(n_2)}, 
  $n\geq2$ (if $n=2$ the corresponding component is of type
  $\mathsf B_2$), $S^p=\{\alpha'_{3},\dots,\alpha'_{n}\}$,
  $\Sigma=\{\alpha_1+\alpha'_1,\alpha'_1+2\alpha'_2+\dots+2\alpha'_{n-1}+\alpha'_{n}\}$. 
$H=SL(2)\times Sp(2n-2)$.
\[\diagramCone\]

\item
  $\mathbf{aa(1,1)+c^\ast(n_1)+c^\ast(n_2)}$\label{aa(1,1)+c''(n_1)+c''(n_2)}, $n_1,n_2\geq2$ (if $n_i=2$ the corresponding component is of type $\mathsf B_2$), $S^p=$ $\{\alpha_3$, $\dots$, $\alpha_{n_1}$; $\alpha'_3$, $\dots$, $\alpha'_{n_2}\}$, $\Sigma=\{\alpha_1+\alpha'_1,\alpha_1+2\alpha_2+\dots+2\alpha_{n_1-1}+\alpha_{n_1},\alpha'_1+2\alpha'_2+\dots+2\alpha'_{n_2-1}+\alpha'_{n_2}\}$. 
$H=SL(2)\times Sp(2n_1-2)\times Sp(2n_2-2)$.
\[\diagramaaoneonepluscsecondnonepluscsecondntwo\]

\item $\mathbf{ac^\ast(p)+c^\ast(q)}$\label{acast(p)+c''(q)}, $n=p+q-1$,
  $p\geq2$, $q\geq2$, $S^p=\{\alpha_{p+2},\dots,\alpha_n\}$,
  $\Sigma=\{\alpha_1+\alpha_2,\dots,\alpha_i+\alpha_{i+1},\dots,\alpha_{p-1}+\alpha_p;\alpha_p+2\alpha_{p+1}+\dots+2\alpha_{n-1}+\alpha_n\}$. 
\begin{itemize}
\item[-] $p$ even: $H$ is the parabolic subgroup of semisimple type $\mathsf
  C_{p/2}\times\mathsf C_{n-1-p/2}$ in the symmetric subgroup $Sp(p)\times Sp(2n-p)$.
\item[-] $p$ odd: $H$ is the parabolic subgroup of semisimple type $\mathsf
  C_{(p-1)/2}\times \mathsf C_{n-(p+1)/2}$ in the symmetric subgroup
  $Sp(p+1)\times Sp(2n-p-1)$.
\end{itemize}
\[\diagramacastppluscsecondq\]

\item $\mathbf{a'(1)+c^\ast(q)}$\label{a'(1)+c''(q)}, $n=q\geq3$,
  $S^p=\{\alpha_3,\dots,\alpha_n\}$,
  $\Sigma=\{2\alpha_1,\alpha_1+2\alpha_2+\dots+2\alpha_{n-1}+\alpha_n\}$. 
$H=N_G(GL(1)\times Sp(2n-2))$.
\[\diagramCfive\]

\primitivetype{Type $\mathsf D$}

\item $\mathbf{dd(p,p)}$\label{dd(p,p)}, $p\geq4$,
  $S^p=\emptyset$, $\Sigma=\{\alpha_1+\alpha'_1,\dots$,
  $\alpha_i+\alpha'_i,\dots$,
  $\alpha_p+\alpha'_p\}$. $H=Spin(2p)\cdot Z_G$ (symmetric subgroup).
\[\diagramddpp\]

\item $\mathbf{do(p+q)}$\label{do(p+q)}, $n=p+q\geq4$, $p\geq1$,
  $q\geq2$, $S^p=\{\alpha_{p+2},\dots,\alpha_n\}$,
  $\Sigma=\{2\alpha_1,\dots,2\alpha_i,\dots$, $2\alpha_p$;
  $2\alpha_{p+1}+\dots+2\alpha_{n-2}+\alpha_{n-1}+\alpha_n\}$. 
$H=(Pin(p+1)\times Pin(2n-p-1))\cap G$ (symmetric subgroup). 
\[\diagramdopplusq\]

\item $\mathbf{do(n)}$\label{do(n)}, $n\geq4$, $S^p=\emptyset$,
  $\Sigma=\{2\alpha_1,\dots,2\alpha_i,\dots$, $2\alpha_n\}$. 
$H=Spin(n)\times Spin(n)$ (symmetric subgroup).
\[\diagramdon\]

\item $\mathbf{d(n)}$\label{d(n)}, $n\geq4$,
  $S^p=S\setminus\{\alpha_1\}$,
  $\Sigma=\{2\alpha_1+\dots+2\alpha_i+\dots+2\alpha_{n-2}+\alpha_{n-1}+\alpha_n\}$. 
$H=Spin(2n-1)\cdot Z_G$ (symmetric subgroup).
\[\begin{picture}(6900,2400)\put(0,0){\usebox{\shortdm}}\end{picture}\]

\item $\mathbf{dc'(n)}$\label{dc'(n)}, $n\geq6$ even,
  $S^p=\{\alpha_1,\dots,\alpha_{2i-1},\alpha_{2i+1},\dots,\alpha_{n-1}\}$, $\Sigma=$ $\{\alpha_1+2\alpha_2+\alpha_3,\dots$, $\alpha_{2i-1}+2\alpha_{2i}+\alpha_{2i+1},\dots$, $\alpha_{n-3}+2\alpha_{n-2}+\alpha_{n-1};2\alpha_n\}$. 
$H=N_G(GL(n))$ (symmetric subgroup).
\[\diagramdcprimen\]

\item $\mathbf{dc(n)}$\label{dc(n)}, $n\geq5$ odd,
  $S^p=\{\alpha_1,\dots,\alpha_{2i-1},\alpha_{2i+1},\dots,\alpha_{n-2}\}$, $\Sigma=$ $\{\alpha_1+2\alpha_2+\alpha_3,\dots$, $\alpha_{2i-1}+2\alpha_{2i}+\alpha_{2i+1},\dots$, $\alpha_{n-4}+2\alpha_{n-3}+\alpha_{n-2};\alpha_{n-1}+\alpha_{n-2}+\alpha_n\}$. 
$H=GL(n)$ (symmetric subgroup).
\[\diagramdcnodd\]

\item $\mathbf{ds(n)}$\label{ds(n)}, $n\geq4$,
  $S^p=\{\alpha_2,\dots,\alpha_{n-2}\}$,
  $\Sigma=\{\alpha_1+\dots+\alpha_{n-1},\alpha_1+\dots+\alpha_{n-2}+\alpha_n\}$.
$H$ is the parabolic subgroup of semisimple type $\mathsf A_{n-2}$ in the symmetric subgroup $Spin(2n-1)\cdot Z_G$.
\[\diagramDseven\]

\item $\mathbf{ds^\ast(4)}$\label{dsast(4)},
  $S^p=\{\alpha_2\}$,
  $\Sigma=\{\alpha_1+\alpha_2+\alpha_3,\alpha_3+\alpha_2+\alpha_4,\alpha_4+\alpha_2+\alpha_1\}$. 
$H=K\cdot Z_G$, with $K$ simple subgroup of type $\mathsf G_2$.
\[\diagramdsastfour\]

\item $\mathbf{dc^\ast(n)}$\label{dcast(n)}, $n\geq4$,
  $S^p=\emptyset$,
  $\Sigma=\{\alpha_1+\alpha_2,\dots,\alpha_i+\alpha_{i+1},\dots,\alpha_{n-2}+\alpha_n;\alpha_{n-2}+\alpha_n\}$. 
\begin{itemize}
\item[-] $n$ even: $H$ is the stabiliser of the line $[e]\in\mathbb
  P(\mathfrak g)$, where $e$ is a nilpotent element in the adjoint
  orbit of characteristic $(10\dots011)$.
\item[-] $n$ odd: $H\subset P$, where $P$ is the parabolic subgroup of
  semisimple type $\mathsf A_{n-2}$ in the symmetric subgroup
  $Spin(2n-1)\cdot Z_G$, $H$ has the same radical and semisimple type $\mathsf C_{(n-1)/2}$.
\end{itemize}
\[\diagramdcastn\]

\item $\mathbf{a(p)+d(q)}$\label{a(p)+d(q)}, $n=p+q$, $p,q\geq2$,
  $S^p=$ $\{\alpha_2$, $\dots$, $\alpha_{p-1}$; $\alpha_{p+2}$,
  $\dots$, $\alpha_n\}$,
  $\Sigma=\{\alpha_1+\dots+\alpha_p,2\alpha_{p+1}+\dots+2\alpha_{n-2}+\alpha_{n-1}+\alpha_n\}$. 
$H$ is the parabolic subgroup of semisimple type $\mathsf
A_{p-1}\times\mathsf B_{n-p-1}$ in the symmetric subgroup 
$Spin(2n-1)\cdot Z_G$.
\[\diagramDfour\]

\item $\mathbf{ac^\ast(p)+d(q)}$\label{acast(p)+d(q)}, $n=p+q$,
  $p\geq2$, $q\geq2$, $S^p=\{\alpha_{p+2},\dots,\alpha_n\}$,
  $\Sigma=\{\alpha_1+\alpha_2,\dots,\alpha_i+\alpha_{i+1},\dots,\alpha_{p-1}+\alpha_p;2\alpha_{p+1}+\dots+2\alpha_{n-2}+\alpha_{n-1}+\alpha_n\}$. 
\begin{itemize}
\item[-] $p$ even: $H\subset P$, where $P$ is the parabolic subgroup of semisimple type $\mathsf A_{p-1}\times \mathsf B_{q-1}$ in the symmetric subgroup of type $Spin(2n-1)\cdot Z_G$, $H$ has the same radical and semisimple type $\mathsf C_{p/2}\times\mathsf B_{q-1}$.
\item[-] $p$ odd: $H$ is the stabiliser of the line $[e]\in\mathbb
  P(\mathfrak g)$, where $e$ is a nilpotent element in the adjoint
  orbit of characteristic $(10\dots010\dots0)$, with $\alpha_p(h)=1$.
\end{itemize}
\[\diagramacastpplusdq\]

\primitivetype{Type $\mathsf E$}

\item $\mathbf{ee(p,p)}$\label{ee(p,p)}, $p=6,7,8$,
  $S^p=\emptyset$, $\Sigma=\{\alpha_1+\alpha'_1,\dots$,
  $\alpha_i+\alpha'_i,\dots$, $\alpha_p+\alpha'_p\}$. 
$H=K\cdot Z_G$ (symmetric subgroup) with $K$ simple of type $\mathsf E_p$.
\[\diagrameepp\]

\item $\mathbf{eo(n)}$\label{eo(n)}, $n=6,7,8$, $S^p=\emptyset$,
  $\Sigma=\{2\alpha_1,\dots,2\alpha_i,\dots,2\alpha_n\}$. 
$H$ is the symmetric subgroup respectively equal to $Sp(8)\cdot Z_G$,
$SL(8)\cdot Z_G$, $Spin(16)$.
\[\diagrameon\]

\item $\mathbf{ea(6)}$\label{ea(6)}, $S^p=\emptyset$,
  $\Sigma=\{\alpha_1+\alpha_6,\alpha_3+\alpha_5,2\alpha_2,2\alpha_4\}$. 
$H=(SL(6)\times SL(2))\cdot Z_G$ (symmetric subgroup).
\[\diagrameasix\]

\item $\mathbf{ed(6)}$\label{ed(6)},
  $S^p=\{\alpha_3,\alpha_4,\alpha_5\}$,
  $\Sigma=\{\alpha_1+\alpha_3+\alpha_4+\alpha_5+\alpha_6,2\alpha_2+2\alpha_4+\alpha_3+\alpha_5\}$. 
$H=Spin(10)\times GL(1)$ (symmetric subgroup).
\[\diagramEtwo\]

\item $\mathbf{ef(6)}$\label{ef(6)},
  $S^p=\{\alpha_2,\alpha_3,\alpha_4,\alpha_5\}$,
  $\Sigma=\{2\alpha_1+2\alpha_3+2\alpha_4+\alpha_2+\alpha_5,\alpha_2+\alpha_3+2\alpha_4+2\alpha_5+2\alpha_6\}$. 
$H=K\cdot Z_G$ (symmetric subgroup) with $K$ simple of type $\mathsf F_4$.
\[\diagramEone\]

\item $\mathbf{ec(7)}$\label{ec(7)},
  $S^p=\{\alpha_2,\alpha_5,\alpha_7\}$,
  $\Sigma=\{2\alpha_1,2\alpha_3,\alpha_2+2\alpha_4+\alpha_5,\alpha_5+2\alpha_6+\alpha_7\}$. 
$H=(Spin(12)\times SL(2))\cdot Z_G$ (symmetric subgroup).
\[\diagramecseven\]

\item $\mathbf{ef(n)}$\label{ef(n)}, $n=7$ (or $8$),
  $S^p=\{\alpha_2,\alpha_3,\alpha_4,\alpha_5\}$,
  $\Sigma=\{2\alpha_1+2\alpha_3+2\alpha_4+\alpha_2+\alpha_5,2\alpha_6+2\alpha_5+2\alpha_4+\alpha_2+\alpha_3,2\alpha_7,(2\alpha_8)\}$. 
\begin{itemize}
\item[-] $n=7$: $H=N_G(K\times GL(1))$ (symmetric subgroup) with $K$
  simple of type $\mathsf E_6$.
\item[-] $n=8$: $H=(K\times SL(2))\cdot Z_G$ (symmetric subgroup) with
  $K$ simple of type $\mathsf E_7$.
\end{itemize}
\[\diagramefprimen\]

\item $\mathbf{ec^\ast(n)}$\label{ecast(n)}, $n=6,7,8$,
  $S^p=\emptyset$,
  $\Sigma=\{\alpha_1+\alpha_3,\alpha_2+\alpha_4,\alpha_3+\alpha_4,\dots$, $\alpha_i+\alpha_{i+1},\dots$, $\alpha_{n-1}+\alpha_n\}$. 
\begin{itemize}
\item[-] $n=6$: $H$ is the parabolic subgroup of semisimple type $\mathsf C_3$ in the symmetric subgroup of type $\mathsf F_4$.
\item[-] $n=7$: $H$ is the stabiliser of the line $[e]\in\mathbb
  P(\mathfrak g)$, where $e$ is a nilpotent element in the adjoint
  orbit of characteristic $(0100001)$.
\item[-] $n=8$: $H$ is the stabiliser of $[e]\in\mathbb
  P(\mathfrak g)$, where $e$ is a nilpotent element with characteristic $(01000000)$.
\end{itemize}
\[\diagramecastn\]

\item $\mathbf{ef(6)+a(2)}$\label{ef(6)+a(2)},
  $S^p=\{\alpha_2,\alpha_3,\alpha_4,\alpha_5\}$,
  $\Sigma=\{2\alpha_1+2\alpha_3+2\alpha_4+\alpha_2+\alpha_5,2\alpha_6+2\alpha_5+2\alpha_4+\alpha_2+\alpha_3,\alpha_7+\alpha_8\}$. 
$H$ is the stabiliser of the line $[e]\in\mathbb
  P(\mathfrak g)$, where $e$ is a nilpotent element in the adjoint
  orbit of characteristic $(00000010)$.
\[\diagramefsixplusatwo\]

\item $\mathbf{aa(2,2)+a(2)}$\label{aa(2,2)+a(2)},
  $S^p=\emptyset$,
  $\Sigma=\{\alpha_1+\alpha_6,\alpha_3+\alpha_5,\alpha_2+\alpha_4\}$. 
$H$ is the stabiliser of the line $[e]\in\mathbb
  P(\mathfrak g)$, where $e$ is a nilpotent element in the adjoint
  orbit of characteristic $(000100)$.
\[\diagramaatwotwoplusatwo\]

\item $\mathbf{ac(5)+a(2)}$\label{ac(5)+a(2)},
  $S^p=\{\alpha_2,\alpha_5,\alpha_7\}$,
  $\Sigma=\{\alpha_1+\alpha_3,\alpha_2+2\alpha_4+\alpha_5,\alpha_5+2\alpha_6+\alpha_7\}$. 
$H$ is the stabiliser of the line $[e]\in\mathbb
  P(\mathfrak g)$, where $e$ is a nilpotent element in the adjoint
  orbit of characteristic $(0010000)$.
\[\diagramacfiveplusatwo\]

\primitivetype{Type $\mathsf F$}

\item $\mathbf{ff(4,4)}$\label{ff(4,4)}, $S^p=\emptyset$,
  $\Sigma=\{\alpha_1+\alpha'_1,\alpha_2+\alpha'_2,\alpha_3+\alpha'_3,\alpha_4+\alpha'_4\}$.
$H$ is a symmetric subgroup, simple of type $\mathsf F_4$.
\[\diagramfffourfour\]

\item $\mathbf{fo(4)}$\label{fo(4)}, $S^p=\emptyset$,
  $\Sigma=\{2\alpha_1,2\alpha_2,2\alpha_3,2\alpha_4\}$. 
$H=Sp(6)\times SL(2)$ (symmetric subgroup).
\[\diagramfofour\]

\item $\mathbf{f(4)}$\label{f(4)}, 
  $S^p=S\setminus\{\alpha_4\}$,
  $\Sigma=\{\alpha_1+2\alpha_2+3\alpha_3+2\alpha_4\}$. 
$H=Spin(9)$ (symmetric subgroup).
\[\begin{picture}(5700,600)\put(0,0){\usebox{\ffour}}\end{picture}\]

\item $\mathbf{fa(1+2+1)}$\label{fa(1+2+1)}, $S^p=\emptyset$,
  $\Sigma=\{\alpha_1+\alpha_4,\alpha_2+\alpha_3\}$. $H$ is described
  in Section~\ref{exceptions}.
\[\diagramFone\]

\item $\mathbf{fd(4)}$\label{fd(4)}, $S^p=\{\alpha_2\}$,
  $\Sigma=\{\alpha_1+\alpha_2+\alpha_3,\alpha_2+2\alpha_3+\alpha_4\}$. 
$H$ is described
  in Section~\ref{exceptions}.
\[\diagramFthree\]

\item $\mathbf{ao(2)+a(2)}$\label{ao(2)+a(2)}, $S^p=\emptyset$,
  $\Sigma=\{\alpha_1+\alpha_2,2\alpha_3,2\alpha_4\}$. 
$H$ is the stabiliser of the line $[e]\in\mathbb
  P(\mathfrak g)$, where $e$ is a nilpotent element in the adjoint
  orbit of characteristic $(0100)$.
\[\diagramaotwoplusatwo\]

\item $\mathbf{fc^\ast(4)}$\label{fcast(4)}, $S^p=\emptyset$,
  $\Sigma=\{\alpha_1+\alpha_2,\alpha_2+\alpha_3,\alpha_3+\alpha_4\}$. 
$H$ is the parabolic subgroup of semisimple type $\mathsf A_1\times\mathsf
B_2$ in the symmetric subgroup $Spin(9)$.
\[\diagramfcastfour\]

\primitivetype{Type $\mathsf G$}

\item $\mathbf{gg(2,2)}$\label{gg(2,2)}, $S^p=\emptyset$,
  $\Sigma=\{\alpha_1+\alpha'_1,\alpha_2+\alpha'_2\}$. 
$H$ is a symmetric subgroup, simple of type $\mathsf G_2$.
\[\diagramGtwo\]

\item $\mathbf{go(2)}$\label{go(2)}, $S^p=\emptyset$,
  $\Sigma=\{2\alpha_1,2\alpha_2\}$. 
$H=SL(2)\times SL(2)$ (symmetric subgroup).
\[\diagramGone\]

\item $\mathbf{g(2)}$\label{g(2)}, $S^p=\{\alpha_2\}$,
  $\Sigma=\{2\alpha_1+\alpha_2\}$. 
$H=SL(3)$.
\[\begin{picture}(2100,600)\put(0,0){\usebox{\gtwo}}\end{picture}\]

\item $\mathbf{g'(2)}$\label{g'(2)}, $S^p=\{\alpha_2\}$,
  $\Sigma=\{4\alpha_1+2\alpha_2\}$. 
$K=N_G(SL(3))$.
\[\begin{picture}(2100,1200)\put(0,0){\usebox{\gprimetwo}}\end{picture}\]

\item $\mathbf{g^\ast(2)}$\label{g''(2)}, $S^p=\emptyset$,
  $\Sigma=\{\alpha_1+\alpha_2\}$. 
$H$ is the stabiliser of the line $[e]\in\mathbb
  P(\mathfrak g)$, where $e$ is a nilpotent element in the adjoint
  orbit of characteristic $(10)$.
\[\begin{picture}(2400,600)\put(0,0){\usebox{\gsecondtwo}}\end{picture}\]

\end{enumerate}

\subsubsection{Two cases in type $\mathsf F$}\label{exceptions}
The wonderful subgroups $H$ of the rank two cases (\ref{fa(1+2+1)}) and
(\ref{fd(4)}) are known and can be described, like all the others, 
by providing 
a Levi factor $L$ and the unipotent radical $H^u$ as a $L$-representation
(\cite{W}).

Note that they are neither reductive, neither stabilisers of the line
spanned by a nilpotent element in the adjoint representation, nor
contained in a parabolic subgroup of a symmetric subgroup of $G$.

In order to get a more geometric description we realise them as
stabilisers of a line in a fundamental representation of $G$.

Let us denote by $\omega_i$, $i=1,2,3,4$, the $i$-th fundamental
weight of $G$ and by $V(\omega_i)$ the corresponding fundamental
representation.

\paragraph{\textit{Case \ref{fa(1+2+1)}}}
The subgroup $H$ is the stabiliser of a line $[v]\in\mathbb
P(V(\omega_2))$. The representation $V(\omega_1)$ is the adjoint
representation, its 2nd alternating power decomposes as
$\bigwedge^2V(\omega_1)=V(\omega_2)\oplus V(\omega_1)$. 
One can take the vector $v$ to be  
\[X_{\alpha_1+2\alpha_2+3\alpha_3+\alpha_4}\wedge
X_{\alpha_1+2\alpha_2+3\alpha_3+2\alpha_4}\in V(\omega_2),\]
where the $X_\gamma$'s are root vectors. It is of weight $\omega_3$.

\paragraph{\textit{Case \ref{fd(4)}}}
The subgroup $H$ is the stabiliser of a line $[v]\in\mathbb
P(V(\omega_3))$. 

The group $G$ consists of automorphisms of the exceptional Jordan
algebra $\mathcal J$ of hermitian $(3\times3)$-matrices of complex
Cayley octonions. 
The space of elements of trace 0 in $\mathcal J$ realises the
representation $V(\omega_4)$. Its 2nd alternating power decomposes as 
$V(\omega_3)\oplus V(\omega_1)$.

We omit the details, but it is worth saying that the investigated
vector $v\in V(\omega_3)$ can be explicitly found in the 
5-dimensional eigenspace of weight $\omega_4$.


\section{Proofs}

\subsection{Primitive spherical systems}
We shall explain how we determine all primitive spherical systems
without simple spherical roots. As in \cite{Lu01} (\textit{see} also
\cite{BP}), we make systematic use of the notion of
$\Delta$-connectedness.

\subsubsection{$\Delta$-Connectedness}
Take a spherical system $(S^p,\Sigma)$. 
Denote $\Delta$ its set of colours.

Let $S'$ be a subset of simple roots.
Define $\Delta(S')=\{D_\alpha:\alpha\in S'\}$.

Two spherical roots $\gamma_1,\gamma_2\in\Sigma$ are said to be
\textit{strongly $\Delta$-adjacent} if
\smallbreak
\noindent{\rm(i)}\enspace
$\langle\rho(D),\gamma_2\rangle\neq0$ for all
$D\in\Delta(\supp\gamma_1)$;
\smallbreak
\noindent{\rm(ii)}\enspace
$\langle\rho(D),\gamma_1\rangle\neq0$, for all
$D\in\Delta(\supp\gamma_2)$.

Given  a subset $\Sigma^\prime$ of $\Sigma$, the spherical system
$(S^p\cap\supp(\Sigma^\prime),\Sigma^\prime)$ is said to be
\textit{strongly $\Delta$-connected} if for every couple of spherical
roots $\gamma,\gamma'$ in $\Sigma^\prime$ there exists a sequence 
$$\gamma_1=\gamma,\gamma_2,\dots,\gamma_n=\gamma'$$
of spherical roots in $\Sigma'$, such that $\gamma_i$ is strongly
$\Delta$-adjacent to $\gamma_{i+1}$ for $i=1,\dots,n-1$. 
If $\Sigma^\prime\subset\Sigma$ is maximal with this property 
$(S^p\cap\supp(\Sigma^\prime),\Sigma^\prime)$ is called a
\textit{strongly $\Delta$-connected component} of $(S^p,\Sigma)$.
 
Denote $\Delta(\Sigma^\prime)$ the subset of colours
$D\in\Delta(\supp(\Sigma^\prime))$ such that 
$$\langle\rho(D),\gamma\rangle=0
\mbox{ for all } \gamma\in\Sigma\setminus\Sigma^\prime .
$$

We say that  $(S^p\cap\supp(\Sigma^\prime),\Sigma^\prime)$ is 
\smallbreak
\noindent
{\rm --}\enspace\textit{erasable} if there exists a nonempty smooth
distinguished subset of colours $\Delta^\prime$ included in
$\Delta(\Sigma^\prime)$.
\smallbreak
\noindent
{\rm --}\enspace\textit{quasi-erasable} if there exists a nonempty
distinguished subset of colours $\Delta^\prime$ included in
$\Delta(\Sigma^\prime)$ such that ($S^p/\Delta'$, $\Sigma/\Delta'$) is
a spherical system.

\begin{lemma}[\cite{BP} Lemma 3.9]\label{lemma:erasable} 
Given a spherical system $(S^p,\Sigma)$.
Let $\Sigma_1$ and $\Sigma_2$ be two disjoint subsets of $\Sigma$
giving two quasi-erasable localisations.
Suppose one of them (at least) is erasable.
Then the corresponding subsets
$\Delta^\prime_1\subset\Delta(\Sigma_1)$ and
$\Delta^\prime_2\subset\Delta(\Sigma_2)$ decompose the spherical
system $(S^p,\Sigma)$.
\end{lemma}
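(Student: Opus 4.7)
The strategy is to verify, for the two distinguished subsets $\Delta'_1$ and $\Delta'_2$ supplied by the quasi-erasability hypothesis, the four decomposition conditions listed just before Proposition~3.5. Condition~(iv) is immediate, since by assumption at least one of the two localisations is \emph{erasable}, so the corresponding subset may be chosen smooth. Condition~(ii) is the cleanest structural check: any spherical root $\gamma\in\Sigma\setminus\Sigma_i$ is annihilated by every $\rho(D)$ with $D\in\Delta(\Sigma_i)$ by definition of $\Delta(\Sigma_i)$, so $\gamma$ lies in the sub-semigroup $\{v\in\mathbb Z_{\geq0}\Sigma:\langle\rho(D),v\rangle=0\text{ for all }D\in\Delta'_i\}$; being already indecomposable in $\mathbb Z_{\geq0}\Sigma$ it remains indecomposable there, hence belongs to $\Sigma/\Delta'_i$. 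This gives $\Sigma\setminus(\Sigma/\Delta'_i)\subset\Sigma_i$ for $i=1,2$, and the disjointness of $\Sigma_1,\Sigma_2$ yields~(ii).

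For~(i), a common colour $D_\alpha\in\Delta'_1\cap\Delta'_2$ would satisfy $\langle\rho(D_\alpha),\gamma\rangle=0$ simultaneously for all $\gamma\notin\Sigma_1$ and for all $\gamma\notin\Sigma_2$; since $\Sigma_1\cap\Sigma_2=\emptyset$ these exhaust $\Sigma$, forcing $\rho(D_\alpha)\equiv0$ on $\Sigma$. I then plan to rule this out via axiom~(S): as $D_\alpha\in\Delta(\supp\Sigma_1)$, there exists $\gamma\in\Sigma_1$ with $\alpha\in\supp\gamma$, and the rank-one wonderful variety realising $\gamma$ with the prescribed $S^p$ forces $\langle\alpha^\vee,\gamma\rangle\neq0$ (condition (R') excludes the $2\alpha\in\Sigma$ alternative). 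This contradicts $\rho(D_\alpha)\equiv0$, so~(i) holds.

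The most delicate step is~(iii). Given $\alpha$ with $D_\alpha\in\Delta'_1$ and $\beta$ with $D_\beta\in\Delta'_2$, one has $\alpha\in\supp\Sigma_1$ and $\beta\in\supp\Sigma_2$; the equality $\alpha=\beta$ is excluded by~(i), keeping in mind the identifications $D_\alpha=D_{\alpha'}$ from~($\Sigma 2$), which in any case identify only pairs of \emph{orthogonal} simple roots. To establish $\langle\alpha^\vee,\beta\rangle=0$ I intend to exploit the identity $\langle\rho(D_\alpha),\gamma\rangle=\langle\alpha^\vee,\gamma\rangle=0$ valid for every $\gamma\in\Sigma_2$: expanding $\gamma$ in simple roots and using~($\Sigma 1$),~($\Sigma 2$) together with the explicit list of admissible spherical roots from Section~\ref{Luna's diagrams}, the vanishing of $\langle\alpha^\vee,\gamma\rangle$ for all $\gamma\in\Sigma_2$ with $\beta\in\supp\gamma$ propagates to $\langle\alpha^\vee,\beta\rangle=0$. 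This case-by-case extraction of orthogonality from the shape of rank-one spherical roots is the technical heart of the proof; once handled, the four conditions are verified and Proposition~3.5 provides the decomposition.
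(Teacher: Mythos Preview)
The paper does not supply its own proof of this lemma; it is quoted from \cite{BP} without argument. So there is no in-paper proof to compare against, and your proposal must stand on its own merits.

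Your overall architecture is sound: verifying conditions (i)--(iv) of the decomposition definition is exactly what has to be done, and your treatments of (iv) and (ii) are correct. In particular, the observation that every $\gamma\in\Sigma\setminus\Sigma_i$ is annihilated by each $\rho(D)$ with $D\in\Delta'_i\subset\Delta(\Sigma_i)$, and therefore survives as an indecomposable element of $\Sigma/\Delta'_i$, is the right way to get $\Sigma\setminus(\Sigma/\Delta'_i)\subset\Sigma_i$.

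There is, however, a genuine gap in your argument for (i). You assert that if $\alpha\in\supp\gamma$ with $\alpha\notin S^p$, then axiom~(S) forces $\langle\alpha^\vee,\gamma\rangle\neq0$. This is false. In the rank-one system $b^\ast(n)$ one has $\gamma=\alpha_1+\cdots+\alpha_n$ with $\alpha_n\notin S^p$, yet $\langle\alpha_n^\vee,\gamma\rangle=-2+2=0$; likewise in $c^\ast(n)$ one has $\gamma=\alpha_1+2\alpha_2+\cdots+2\alpha_{n-1}+\alpha_n$ with $\alpha_1\notin S^p$ and $\langle\alpha_1^\vee,\gamma\rangle=2-2=0$. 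So a colour $D_\alpha$ with $\alpha\in\supp\gamma$ can perfectly well have $\rho(D_\alpha)(\gamma)=0$, and your contradiction does not materialise. The disjointness $\Delta'_1\cap\Delta'_2=\emptyset$ must instead be extracted from the \emph{membership} condition $D\in\Delta(\supp\Sigma_1)\cap\Delta(\supp\Sigma_2)$: one has to argue that a simple root lying simultaneously in $\supp\Sigma_1$ and $\supp\Sigma_2$ (or $\sim$-linked to both) cannot give a colour on which $\rho$ vanishes identically, and this requires a finer inspection of the rank-one list than the blanket claim you made.

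The same issue infects your sketch for (iii): the implication ``$\langle\alpha^\vee,\gamma\rangle=0$ for all $\gamma\in\Sigma_2$ with $\beta\in\supp\gamma$ $\Rightarrow$ $\langle\alpha^\vee,\beta\rangle=0$'' fails precisely when $\alpha$ itself lies in $\supp\gamma$ (as in the $c^\ast$ example above, take $\alpha=\alpha_1$, $\beta=\alpha_2$). You will need to combine the vanishing conditions coming from \emph{both} sides and rule out such overlaps of supports, which again hinges on the same delicate point you skipped in (i). The case-by-case analysis you allude to is genuinely required here, and is where the actual work in \cite{BP} lies.
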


The strongly $\Delta$-connected component
$(S^p\cap\supp(\Sigma^\prime),\Sigma^\prime)$ is said to be
\textit{isolated} if the partition given by $\supp(\Sigma^\prime)$ and
its complement within $\supp(\Sigma)$ yields a decomposition of
$(S^p\cap\supp(\Sigma),\Sigma)$. An isolated component is erasable.

\subsubsection{General procedure}

To obtain the list of primitive spherical systems we first list
cuspidal strongly $\Delta$-connected spherical systems, by adding one
by one a strongly $\Delta$-adjacent (non-simple) spherical root. 
Then for each strongly $\Delta$-connected spherical system
$(S^p,\Sigma')$ one can realise how it can be a strongly
$\Delta$-connected component of a bigger spherical system: there will
be some colours $D$ such that $\rho(D)$ is non-zero on some spherical
root not belonging to $\Sigma'$, and it will be possible to check
whether such a component is isolated, erasable, quasi-erasable or none
of them. This property does not depend on the other components, but
only on the way of gluing. So one can easily realise that many
strongly $\Delta$-connected components are always erasable or
quasi-erasable no matter which is the whole spherical system. 
Therefore, to get all the remaining primitive spherical systems we
glue together, in all possible ways, two or more non-isolated cuspidal
strongly $\Delta$-connected spherical systems, avoiding pairs
satisfying the hypothesis of Lemma~\ref{lemma:erasable}.

\subsubsection{Cuspidal strongly $\Delta$-connected spherical systems}\ 

Isolated:
$aa(p,p)\ p\geq3$, $ao(n)\ n\geq3$, $ac(n)\ n\geq7$, $aa(p+q+p)\ p\geq2$, $aa'(p+1+p)$, $bb(p,p)$, $bo(p+q)$, $cc(p,p)$, $co(n)$, $c(n)$, $cc(p+q)$, $cc'(p+2)$, $ca(1+q+1)$, $dd(p,p)$, $do(p+q)$, $do(n)$, $dc'(n)$, $dc(n)\ n\geq7$, $ee(p,p)$, $eo(n)$, $ea(6)$, $ec(7)$, $ef(n)\ n=7,8$, $ec^\ast(n)\ n=7,8$, $ff(4,4)$, $fo(4)$, $f(4)$, $fa(1+2+1)$, $gg(2,2)$, $go(2)$, $g(2)$, $g'(2)$, $g^\ast(2)$.   

Erasable: 
$aa(1+q+1)$, $ac^\ast(n)$ $n\geq7$ odd, $bc^\ast(n)$ $n$ even, $dc(5)$, $dc^\ast(n)$ $n$ even, $ed(6)$, $b^{\ast\ast}(3)$.

Quasi-erasable: 
$ac^\ast(n)\ n\geq4$, $b^\ast(n)\ n\geq5$, $bc^\ast(n)\ n\geq5$,  $ds(n)$, $ds^\ast(4)$, $dc^\ast(n)$, $ec^\ast(6)$. 

Not necessarily quasi-erasable:
$aa(p,p)\ p=1,2$, $ao(n)\ n=1,2$, $ac(n)\ n=3,5$, $a(n)$, $ac^\ast(3)$, $b(n)$, $b'(n)$, $b^\ast(n)\ n=2,3,4$, $bc^\ast(3)$, $c^\ast(n)$, $d(n)$, $ef(6)$.

\subsubsection{A non-trivial example}

Consider the case $ac^\ast(n)$ with $n\geq3$. It is strongly
$\Delta$-connected. Let us call $\Sigma'$ its set of spherical
roots and $D_1,\dots,D_n$ its colours. Let us suppose it
corresponds to the strongly $\Delta$-connected component of a possibly
bigger spherical system.

If $\Delta(\Sigma')$ contains $D_2,\dots,D_{n-1}$, then it is itself
distinguished. If it contains $D_1$ (or $D_n$) also, then it is
smooth. If $n=3$, then $\{D_2\}$ is homogeneous.
If $\Delta(\Sigma')$ does not contain $D_2,\dots,D_{n-1}$, then the
underlying Dynkin diagram has a component of type $\mathsf D$ or
$\mathsf E$. In type $\mathsf D$ it can happen that
$\Delta(\Sigma')=\{D_1,D_3\}$, with $n=3$, and this is distinguished.
In type $\mathsf E$, it can happen that $D_2\notin\Delta(\Sigma')$,
for $n=3,4,5,6$, then $D_i\in\Delta(\Sigma')$ for $i\neq2,n$. 
If moreover $n\neq3$, the set of the $D_i$'s with $i$ odd 
is in $\Delta(\Sigma')$ and distinguished, 
since $D_n\in\Delta(\Sigma')$ or $n=4$. Otherwise, if
$n=3$ and $D_3\notin\Delta(\Sigma')$, then $\Delta(\Sigma')=\{D_1\}$
is not distinguished. However, in this last case 
$ac^\ast(3)$ can be a strongly $\Delta$-component 
of no primitive spherical system. 

\subsection{Uniqueness proof}
\label{uniqueness}
For a general connected reductive group $G$, Losev proves in \cite{Lo} that
two spherical $G$-homogeneous spaces $G/H_1$, $G/H_2$ that have the same
combinatorial invariants, $\Xi_{G/H_1}=\Xi_{G/H_2}$,
$\mathcal V_{G/H_1}=\mathcal V_{G/H_2}$ and $\Delta_{G/H_1}=\Delta_{G/H_2}$,
are $G$-equivariantly isomorphic. This implies that two (not
necessarily strict) wonderful $G$-varieties with the same spherical
system are $G$-equivariantly isomorphic.

For strict wonderful varieties, the same uniqueness result may be deduced from \cite{BCF} as we now explain.

First, let us recall the main results obtained in loc.\ cit.

Take $X$ a strict wonderful $G$-variety. Its spherical system (and more precisely its set of colours)
naturally provides an embedding of $X$ in the product of some projective spaces of
simple $G$-modules, say $V_i$ for $i=1,\dots,s$.
More specifically, $V_i$ is the irreducible $G$-module of highest weight given by the $B$-weight of a colour of $X$.
The variety $X$ being strict, these $B$-weights are linearly independent.

Let $\pi:\tilde X\dashrightarrow X$ be the affine multicone given by this embedding and $X_0$ be the affine multicone over the closed $G$-orbit of $X$. 
The $G$-variety $X_0$ is a spherical subvariety of $V$.

A spherical $G$-subvariety of $V$ is called \emph{an invariant deformation of $X_0$} if it is a (classical) deformation of $X_0$ and 
its coordinate ring is isomorphic as a $G$-module to the coordinate ring of $X_0$.

\begin{theorem}[\emph{see} Corollary 2.5 in~\cite{BCF}]
Every invariant deformation of $X_0$ is isomorphic to the closure of a $G$-orbit in $\tilde X$ on which $\pi$ is regular.
\end{theorem}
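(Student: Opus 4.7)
The plan is to set up a two-way correspondence between invariant deformations of $X_0$ and closures of $G$-orbits in $\tilde X$ on which $\pi$ is regular. The multiplicity-free structure of the coordinate rings together with the strictness of $X$ are the two key ingredients.

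In one direction, given a point $x$ of the open locus $U \subset X$ on which $\pi$ is regular, I would pick a lift $\tilde x \in \pi^{-1}(x) \subset V$ and form the orbit closure $Z_x = \overline{G \cdot \tilde x} \subset V$. A suitable one-parameter subgroup of the central torus of the Levi $L_{S^p}$ of $P_X$ rescales the factors $V_i$ and degenerates $\tilde x$ to a lift of a point of the closed orbit $Y \subset X$, producing a flat $G$-equivariant family interpolating between $Z_x$ and $X_0$ whose fibres share the same multiplicity-free $G$-module structure on their coordinate rings. This exhibits $Z_x$ as an invariant deformation of $X_0$. Strictness is used here to make the assignment essentially injective: the colours of $X$ have linearly independent $B$-weights, so the $B$-eigenspaces of $\mathbb C[\tilde X]$ separate distinct $G$-orbits lying over distinct points of $X$.

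For the converse, which is the main obstacle, I would start with an arbitrary invariant deformation $Z \subset V$ of $X_0$. Multiplicity-freeness of $\mathbb C[Z] \cong \mathbb C[X_0]$ makes $Z$ affine spherical, hence possessing a dense $G$-orbit $G \cdot z$. I would then invoke the Alexeev--Brion invariant Hilbert scheme $\mathcal H$ parametrising affine $G$-subschemes of $V$ with the same $G$-equivariant Hilbert function as $X_0$: by Brion's formula, the tangent space $T_{[X_0]}\mathcal H$ is controlled by the spherical roots of $X_0$, and for strict $X$ the colour basis is unambiguous, so this tangent space agrees with the tangent space of $U$ at any point lying over $Y$ via the map $x \mapsto Z_x$.

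Combining this tangent-space identification with properness of the relevant component of $\mathcal H$ and the fact that $\tilde X$ has only finitely many $G \times (\mathbb C^\ast)^s$-orbits, one concludes that $x \mapsto Z_x$ identifies $U$ with the connected component of $\mathcal H$ containing $[X_0]$. Consequently, every invariant deformation $Z$ is isomorphic to $Z_x$ for some $x \in U$, exhibiting $Z$ as the closure of a $G$-orbit in $\tilde X$ on which $\pi$ is regular, as desired. The hardest step, as indicated above, is the tangent-space matching: without strictness, the colours are constrained by the additional datum $\mathbf A$ and the bijection with the spherical roots is lost, so the argument would have to be supplemented by a separate bookkeeping of the $\mathbf A$-data.
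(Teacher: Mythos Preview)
The paper does not prove this theorem. It is quoted verbatim as Corollary~2.5 of \cite{BCF} and used as a black box in the uniqueness argument of Section~\ref{uniqueness}; the surrounding text only recalls the set-up (the embedding of $X$ via its colours, the multicone $\tilde X$, the definition of invariant deformation) so that the statement makes sense. There is therefore nothing in the present paper to compare your proposal against.

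That said, a few remarks on your sketch as a candidate proof of the cited result. The overall architecture---identify orbit closures in $\tilde X$ as invariant deformations via a torus degeneration, then use the Alexeev--Brion invariant Hilbert scheme to show these exhaust all invariant deformations---is indeed the one used in \cite{BCF}. But two of your steps, as written, would not go through. First, you invoke ``properness of the relevant component of $\mathcal H$'': invariant Hilbert schemes are not proper in general, and no such properness is available here. Second, matching tangent spaces at $[X_0]$ is not by itself enough to conclude that $x\mapsto Z_x$ is an isomorphism onto a component; you need smoothness of the target (or an explicit computation that the component is an affine space of the expected dimension) together with an argument that the map is a closed immersion or \'etale. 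In \cite{BCF} this is handled by computing $T_{[X_0]}\mathcal H$ explicitly as a $T$-module with weights the spherical roots, exhibiting a versal family over $\mathbb A^{\Sigma_X}$ coming from $\tilde X$, and deducing that the component of $\mathcal H$ through $[X_0]$ \emph{is} this affine space. Your reference to ``properness'' and ``finitely many $G\times(\mathbb C^\ast)^s$-orbits'' does not substitute for that computation.
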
 

Let $X$ and $X'$ be strict wonderful $G$-varieties.
Suppose they have the same spherical system. 
Let $x$ (resp. $x'$) be a point in $\tilde X$ (resp. $\tilde X'$) over the open $G$-orbit of $X$ (resp. of $X'$).
Note that $X$ and $X'$ have the same closed $G$-orbit since the latter is determined by the data $S^p$ of their common spherical system.
Then by the above theorem the stabilisers of $x$ and $x'$ in $G$ are conjugate.



The generic stabiliser of $X$ (and that of $X'$) equals the normaliser of the stabiliser of $x$ (and that of $x'$) in $G$; the uniqueness follows.

\subsection{Existence Proof}

We shall check case by case that the spherical subgroups $H$ of $G$
listed in Section~\ref{primitiveslist} are indeed wonderful and have
the spherical system which is stated. 
Recall that the cases of rank 1 and rank 2 are already known.

We shall make use of the following identities for a wonderful subgroup
$H$ of $G$:
\begin{lemma}[\cite{Lu01}, Section 5.2]\label{ids}
\begin{equation}\label{eq:dimGH}\dim G/H = \dim G/P_{S^p_{G/H}} + {\rm
    card}\,\Sigma_{G/H}; \end{equation}
\begin{equation}\label{eq:rankH}{\rm rank}\,\Xi(H) = {\rm card}\,
  \Delta_{G/H} - {\rm card}\, \Sigma_{G/H}. \end{equation}
\end{lemma}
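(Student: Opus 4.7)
The plan is to prove the two identities by combining a local model of $X$ around its unique $B^-$-fixed point (for the dimension formula) with a Picard group exact sequence linking the colours, the spherical roots and the character group of $H$ (for the rank formula).

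For \eqref{eq:dimGH}, first I would observe that $\dim G/H = \dim X$, since $G/H$ is the open $G$-orbit in the wonderful embedding $X$. Let $Y$ denote the unique closed $G$-orbit of $X$ and let $y\in Y$ be the point fixed by $B^-$. The stabiliser $G_y$ contains $B^-$ and fixes each colour, so it is the parabolic opposite to $P_{S^p_{G/H}}$, whence $Y\cong G/P_{S^p_{G/H}}$. By the intrinsic characterisation of the spherical roots as the $T$-weights of $T_yX/T_yY$ recalled in Section~2.1, each spherical root occurs with multiplicity one, so $\dim X - \dim Y = \mathrm{card}\,\Sigma_{G/H}$, which gives \eqref{eq:dimGH}.

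For \eqref{eq:rankH}, I would compute $\mathrm{Pic}(G/H)$ in two ways. On the one hand, the standard exact sequence $\Xi(G) \to \Xi(H) \to \mathrm{Pic}(G/H) \to \mathrm{Pic}(G)$ has finite outer terms since $G$ is semisimple, hence $\mathrm{rank}\,\Xi(H) = \mathrm{rank}\,\mathrm{Pic}(G/H)$. On the other hand, $G/H$ has an open $B$-orbit, so every prime divisor is linearly equivalent to a $\mathbb Z$-combination of colours, yielding a surjection $\mathbb Z\Delta_{G/H} \twoheadrightarrow \mathrm{Pic}(G/H)$. Its kernel is generated by the principal divisors of the $B$-semi-invariant rational functions $f_\gamma$ for $\gamma\in \Xi_{G/H}$; since $\mathrm{div}(f_\gamma)|_{G/H} = \sum_{D}\langle\rho(D),\gamma\rangle D$ by the very definition of $\rho$, and since $\gamma\mapsto f_\gamma$ is injective up to scalar, one obtains an exact sequence
\[
0 \longrightarrow \Xi_{G/H} \longrightarrow \mathbb{Z}\Delta_{G/H} \longrightarrow \mathrm{Pic}(G/H) \longrightarrow 0.
\]
Taking ranks and using that $\Sigma_{G/H}$ is a $\mathbb Z$-basis of $\Xi_{G/H}$ for wonderful varieties (Section~2.1) gives \eqref{eq:rankH}.

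The dimension formula is essentially automatic once one invokes the local slice description at the closed orbit $Y$. The point requiring most care is the Picard-group sequence used for the rank formula: one has to verify that a combination of colours which becomes principal on $G/H$ necessarily comes from a $B$-semi-invariant function. This reduces to the fact that a rational function on $G/H$ has $B$-stable divisor precisely when it is a $B$-eigenvector (as $B$ is connected), and that such an eigenvector is determined by its weight up to a non-zero scalar, because $\mathbb C(G/H)^{(B)}/\mathbb C^\times$ is a free abelian group on the weights in $\Xi_{G/H}$.
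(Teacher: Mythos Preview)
The paper does not prove this lemma; it is stated with a citation to \cite{Lu01}, Section~5.2, and then used as a black box in the existence arguments of Section~6.3. Your proof is correct and is the standard argument one finds in the literature.

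One point in your proof of \eqref{eq:rankH} deserves to be made explicit: the surjectivity of $\mathbb Z\Delta_{G/H}\twoheadrightarrow\mathrm{Pic}(G/H)$ is equivalent, via excision, to the vanishing of $\mathrm{Pic}$ of the open $B$-orbit $BH/H\cong B/(B\cap H)$. This holds because every character of $B\cap H$ extends to a character of $B$: any such character kills unipotent elements and hence factors through the image of $B\cap H$ in $B/B^u\cong T$, and restriction of characters along a closed immersion of diagonalisable groups is surjective. Combined with $\mathrm{Pic}(B)=0$ (as $B\cong T\times\mathbb A^N$), this gives $\mathrm{Pic}(B/(B\cap H))=0$. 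The injectivity on the left you justify correctly, using that $\mathcal O(G/H)^\times=\mathbb C^\times$ for $G$ semisimple.
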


The existence proof can be conducted as in \cite{Lu01,BP,Bra}.
Let us briefly recall the approach used there for proving that $G/H$ has a wonderful embedding and has the desired spherical system, where $H$ is the proposed wonderful subgroup.

One can first consider the case where $H$ is reductive (and spherical).
Such subgroups of semisimple groups are classified in \cite{Kr,Bri87} and ~\cite{Mi}.
When $H$ is properly included in its normaliser, we may need to
compute explicitly the colours in $G/H$ to deduce that $H$ is
wonderful. Indeed, by a result of Knop (Corollary 7.6 in~\cite{Kn96}),
the subgroup of the normaliser $N_G(H)$ which stabilises the set of the colours of $G/H$ is wonderful.
Once we know that $H$ is wonderful the identities in Lemma~\ref{ids} strongly
restrict the possibilities for the corresponding spherical system.

If $H$ is not reductive, to check that $H$ is spherical 
one can also use a criterion of Panyushev
(\textit{see} Corollary~1.4 in \cite{Pa94}).
Here one remarks  that $H$ is always (except in one case with 
rank greater than 2) selfnormalising hence wonderful. Notice that to compute the normaliser of $H$ it is enough to consider the normaliser of its reductive part. The computation of the latter can be done by hand.
From the identities stated in the lemma above, it is possible to
conclude with ad hoc arguments.

In this paper we rather proceed slightly differently, namely we shall consider
the wonderful varieties attached either to symmetric spaces (more
generally to affine spherical homogeneous spaces), to spherical
nilpotent orbits or to model spaces. We will then see that they
provide almost all wonderful varieties without simple spherical
roots.
The few remaining cases will be worked out separately.

\subsubsection{Symmetric spaces}
Recall the notation of Appendix~\ref{Symmetric varieties}.

\begin{proposition}~\label{sphericalsyst-symmetric}
In case B~II and C~II with $q=2$, there are two wonderful subgroups
without simple spherical roots, namely $G^\sigma$ and its normaliser.
The corresponding spherical systems are:  
\smallbreak\noindent
B~II:\enspace (\ref{b(n)}) $b(n)$ and (\ref{b'(n)}) $b'(n)$,
\smallbreak\noindent
C~II:\enspace (\ref{cc(p+q)}) $cc(p+q)$ with $q=2$ and (\ref{cc'(p+2)})
$cc'(p+2)$.

In all the remaining cases, there is one wonderful (selfnormalising)
symmetric subgroup without simple spherical roots. 
Further its spherical system is entirely determined by the basis $\tilde\Delta$ of its restricted
root system; the set of its spherical roots coincides with $\tilde\Delta$.

More specifically, the spherical systems of the selfnormalising
symmetric subgroups are (respectively to the list of simple
involutions given in the appendix):
\ref{aa(p,p)}, \ref{bb(p,p)}, \ref{cc(p,p)}, \ref{dd(p,p)},
\ref{ee(p,p)}, \ref{ff(4,4)}, \ref{gg(2,2)}; \ref{ao(n)}--\ref{a(n)},
\ref{bo(p+q)}, \ref{b'(n)}, 
\ref{co(n)}, \ref{c(n)}, \ref{cc(p+q)} for $q>2$,
\ref{cc'(p+2)}, \ref{do(p+q)}--\ref{dc(n)},
\ref{eo(n)}--\ref{ef(n)}, \ref{fo(4)}, \ref{f(4)},
\ref{go(2)}.
\end{proposition}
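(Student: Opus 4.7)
The plan is to combine Vust's description of the combinatorial invariants of symmetric varieties (recalled in Appendix~\ref{Symmetric varieties}) with a systematic inspection of Cartan's classification of involutions. Since $N_G(G^\sigma)$ is always selfnormalising, it is wonderful by Corollary~7.6 of \cite{Kn96}; Vust's theorem identifies the valuation cone of $G/N_G(G^\sigma)$ with the antidominant restricted Weyl chamber, so the spherical roots of its wonderful embedding coincide with the basis $\tilde\Delta$ of the restricted root system of $(G,\sigma)$. The spherical system of $G/N_G(G^\sigma)$ is therefore the pair $(S^p, \tilde\Delta)$, where $S^p$ is the set of simple roots with trivial image in $\mathfrak{a}^\ast$ (the ``black'' nodes of the Satake diagram).

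The first part of the proof is then a case-by-case walk through Cartan's list. For each involution one reads off $(S^p,\tilde\Delta)$ from the Satake datum and matches it to an entry of Section~\ref{primitiveslist}: A~I $\mapsto ao(n)$, A~II $\mapsto ac(n)$, A~III/IV $\mapsto aa(p+q+p)$, $aa'(p+1+p)$ or $a(n)$, B~I $\mapsto bo(p+q)$, B~II $\mapsto b'(n)$, C~I $\mapsto c(n)$, C~II $\mapsto cc(p+q)$ with $q>2$ or $cc'(p+2)$, and so on through all classical and exceptional types down to type $\mathsf G$. Condition (R') is automatic from the shape of $\tilde\Delta$ in each case, and Lemma~\ref{ids} provides a dimensional consistency check that also pins down $S^p$ uniquely.

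The second part treats the exceptional families B~II and C~II with $q=2$, in which $G^\sigma$ is properly contained in $N_G(G^\sigma)$ but remains wonderful. Here I would construct the wonderful embedding of $G/G^\sigma$ as the double cover of the wonderful embedding of $G/N_G(G^\sigma)$ associated to the nontrivial element of $N_G(G^\sigma)/G^\sigma \cong \Z/2\Z$; the effect on the spherical system is to replace one element of $\tilde\Delta$ by its half. In B~II this converts $2\alpha_1+\dots+2\alpha_n$ into $\alpha_1+\dots+\alpha_n$, passing from $b'(n)$ to $b(n)$; in C~II with $q=2$ an analogous halving of the last spherical root of $cc'(p+2)$ yields $cc(p+q)$ with $q=2$. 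A direct inspection rules out the remaining families with $G^\sigma\subsetneq N_G(G^\sigma)$: the halving there always produces a root in $S$, violating (R'), so $G^\sigma$ is not on our list.

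The main obstacle is the extensive but essentially mechanical bookkeeping of the component groups $N_G(G^\sigma)/G^\sigma$ and the explicit form of the halved spherical root, which together determine whether the passage from $N_G(G^\sigma)$ to $G^\sigma$ creates a simple spherical root. Once the two exceptional families above are isolated, Remark~\ref{rmk} guarantees that the selfnormalising symmetric subgroups in the resulting list give rise to strict spherical systems, completing the statement.
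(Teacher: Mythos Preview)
Your overall strategy matches the paper's: both rely on Vust's identification of the valuation cone of a symmetric space with the antidominant restricted Weyl chamber, followed by inspection of Cartan's list. The paper's argument for the generic case is phrased slightly differently: rather than reading $(S^p,\tilde\Delta)$ off the Satake datum, it simply observes that for each $\tilde\alpha\in\tilde\Delta$ no proper multiple $k\tilde\alpha$ with $k\geq 2$ lies in the list of admissible spherical roots, so the spherical roots of $G/N_G(G^\sigma)$ are forced to equal $\tilde\Delta$. (A minor slip in your sketch: C~I corresponds to $co(n)$, not $c(n)$.)

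The one substantive gap is your treatment of B~II and C~II with $q=2$. Taking a ``double cover'' of the wonderful embedding of $G/N_G(G^\sigma)$ does not a priori produce a wonderful variety: such a cover is ramified along the boundary and need not be smooth, and in any case one must still verify that the candidate spherical roots form a \emph{basis} of the enlarged weight lattice $\Xi_{G/G^\sigma}$ --- this is precisely the criterion for $G^\sigma$ to be wonderful. The paper handles this directly by invoking Vust's lattice identity
\[
\Lambda\bigl(T_{-1}/(T_{-1}\cap G^\sigma)\bigr)=(\mathbb Z\tilde\Phi^\vee)^\ast
\]
(Lemma~3.1 of \cite{Vu}), from which one sees that the $\alpha_i-\sigma(\alpha_i)$ form a basis of $\Xi_{G/G^\sigma}$. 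Your heuristic about halving one spherical root is the correct \emph{outcome} of this computation, but the justification should go through the weight lattice rather than a geometric covering construction.

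Your final step --- ruling out the remaining families with $G^\sigma\subsetneq N_G(G^\sigma)$ by observing that halving would produce a simple root --- is a useful complement that the paper's proof leaves implicit.
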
 
     
\begin{proof}
Recall from Appendix~\ref{Symmetric varieties} that the spherical
roots of $G/H$ are given (up to a scalar) by the elements of its
restricted root system.

Consider first the cases distinct from case B~II and case C~II and set $H=N_G(G^\sigma)$ for the corresponding involutions $\sigma$. One sees that if $\tilde\alpha$ is an arbitrary element of one of these restricted root systems then
none of its multiples, except $\tilde \alpha$ itself, is a spherical root.
It follows that the set of spherical roots of $G/H$ does coincide with the corresponding restricted root system.
  
Let us work out now the case of the involutions labeled by B~II and
C~II with $q=2$.
Note first that $G^\sigma$ is of index $2$ in its normaliser.
Consider the following identity (Lemma~3.1 in \cite{Vu})
$$\Lambda\left(T_{-1}/(T_{-1}\cap G^\sigma)\right)=\left(\mathbb
  Z\tilde{\Phi}^\vee\right)^\ast .
$$
Together with the characterisation of the valuation cone of $G/G^\sigma$ recalled in the corresponding appendix,
we get that the $\alpha_i-\sigma(\alpha_i)$'s are indeed the spherical roots of $G/G^\sigma$.
Remark now that the $\alpha_i-\sigma(\alpha_i)$'s form a basis of $\left(\mathbb Z\tilde{\Phi}^\vee\right)^\ast $
hence of the character group $\Xi_{G/G^\sigma}=\Lambda\left(T_{-1}/(T_{-1}\cap G^\sigma)\right)$.
It follows that $G^\sigma$ is a wonderful subgroup of $G$.
\end{proof}

\subsubsection{Other affine homogeneous spaces}
First let us recall the following criterion of affinity; \textsl{see}~\cite{Bri97}.

A spherical homogeneous space $G/H$ is affine if and only if
there exists $\xi\in\mathbb Z_{\geq 0}\Sigma$ such that $ \left\langle
  \rho(\delta),\xi\right\rangle >0$ for every $\delta\in\Delta_{G/H}$.

As a consequence, we get

\begin{lemma}\label{affine-non-symmetric}
The spherical system of a non-symmetric affine wonderful homogeneous
space without 
simple spherical roots (provided it is cuspidal and indecomposable) is
one of the following:
\smallbreak\noindent
(\ref{acast(n)})
$ac^\ast(n)$ for $n$ even,
\smallbreak\noindent
(\ref{bc'(n)}) $bc'(n)$, (\ref{b'''(3)}) $b^{\ast\ast}(3)$, (\ref{b''(4)+b'''(3)}) $b^\ast(4)+b^{\ast\ast}(3)$,
\smallbreak\noindent
(\ref{aa(1,1)+c''(n_2)}) $aa(1,1)+c^\ast(n)$, (\ref{aa(1,1)+c''(n_1)+c''(n_2)}) $aa(1,1)+c^\ast(n_1)+c^\ast(n_2)$, (\ref{a'(1)+c''(q)}) $a'(1)+c^\ast(q)$,
\smallbreak\noindent
(\ref{dsast(4)}) $ds^\ast(4)$
\smallbreak\noindent
(\ref{g(2)}) $g(2)$ and (\ref{g'(2)}) $g'(2)$.
\end{lemma}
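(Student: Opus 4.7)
The plan is to apply the affinity criterion stated at the beginning of this subsection to the complete list of primitive spherical systems in Section~\ref{primitiveslist}, then to discard those already recognised as symmetric in Proposition~\ref{sphericalsyst-symmetric}. What survives should match the eleven families listed in the statement.

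The first step is to render the criterion computationally explicit. For a colour $D_\alpha\in\Delta$ the functional $\rho(D_\alpha)$ is, by its very definition, (possibly one half of) the restriction of $\alpha^\vee$ to $\mathbb Z\Sigma$, so every pairing $\langle\rho(D_\alpha),\gamma\rangle$ with $\gamma\in\Sigma$ can be read off directly from the Luna diagram. Affinity of $G/H$ then reduces to a small linear programming feasibility question on the cone $\mathbb Q_{\geq 0}\Sigma$: is there a non-negative integer combination $\xi=\sum n_\gamma\gamma$ such that $\langle\rho(D),\xi\rangle>0$ for every $D\in\Delta$? The easiest obstruction to rule a case out is the presence of a colour $D$ with $\langle\rho(D),\gamma\rangle\leq 0$ for every $\gamma\in\Sigma$, strict for some $\gamma$; such a $D$ forbids affinity regardless of $\xi$, and this alone eliminates most entries of the catalogue at a glance.

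Next I would traverse the list of Section~\ref{primitiveslist} family by family. The symmetric primitives are taken care of by Proposition~\ref{sphericalsyst-symmetric} and set aside. For each remaining entry I write down the (small) matrix $\bigl(\langle\rho(D_\alpha),\gamma\rangle\bigr)_{D_\alpha,\gamma}$ and either exhibit an explicit admissible $\xi$, placing the case in the final list, or exhibit the obstructing colour and discard it. As a final sanity check one can compare the dimensions of the proposed homogeneous spaces against the two identities of Lemma~\ref{ids}, since each of the eleven surviving cases is accompanied in Section~\ref{primitiveslist} by an explicit description of its wonderful subgroup.

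The main obstacle is purely organisational: the catalogue is long and several families depend on one or two integer parameters, so each parameter range must be treated (for instance, for $ac^\ast(n)$ one has to keep parities apart, which is why only the $n$ even branch of \ref{acast(n)} appears). A minor subtlety arises when two simple roots are identified via $\alpha\sim\beta$ and share one colour, in which case $\rho$ is a sum of two coroot restrictions rather than just one; this slightly enlarges the matrix of pairings but does not change the method. No single case is conceptually hard, and the verification is essentially a finite bookkeeping exercise supported entirely by the combinatorial data displayed in the Luna diagrams.
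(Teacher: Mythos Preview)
Your proposal is correct and follows precisely the approach of the paper, which states the lemma as an immediate consequence of the affinity criterion applied case by case to the primitive list of Section~\ref{primitiveslist}, discarding the symmetric cases via Proposition~\ref{sphericalsyst-symmetric}. One small correction: when $D_\alpha=D_\beta$ with $\alpha\perp\beta$ and $\alpha+\beta\in\Sigma$, the functional $\rho(D_\alpha)$ is given by a \emph{single} coroot restriction (either $\alpha^\vee$ or $\beta^\vee$, which agree on $\mathbb Z\Sigma$ by axiom~$(\Sigma 2)$), not by their sum.
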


\begin{proposition}
Let $H\subset G$ be one of the following subgroups.
Then $G/N_G(H)$ has as its spherical system the corresponding entry in the list of Lemma~\ref{affine-non-symmetric}.
\smallbreak\noindent
{\rm -}\enspace \label{ahs1}$Sp(n)$ in $SL(n+1)$, for $n$ even.
\smallbreak\noindent {\rm -}\enspace\label{ahs2}$GL(n)$ in $Spin(2n+1)$, the connected stabiliser of a maximal isotropic subspace of $\mathbb C^{2n+1}$. 
\smallbreak\noindent
{\rm -}\enspace $G_2$ in $Spin(7)$, the connected subgroup of automorphisms of the complex Cayley octonions.
\smallbreak\noindent
{\rm -}\enspace$Spin(7)$ in $Spin(9)$.
\smallbreak\noindent
{\rm -}\enspace $SL(2)\times Sp(2n-2)$ in $SL(2)\times Sp(2n)$, where $H\subset SL(2)\times SL(2)\times Sp(2n-2)\subset G$, $SL(2)\times Sp(2n-2)$ being the symmetric subgroup of $Sp(2n)$ and $SL(2)$ diagonally embedded in $SL(2)\times SL(2)$.
\smallbreak\noindent
{\rm -}\enspace $SL(2)\times Sp(2n_1-2)\times Sp(2n_2-2)$ in $Sp(2n_1)\times Sp(2n_2)$, where $SL(2)\times Sp(2n_i-2)$ is the symmetric subgroup of $Sp(2n_i)$, $i=1,2$, and $SL(2)$ maps diagonally in $SL(2)\times SL(2)$.
\smallbreak\noindent
{\rm -}\enspace $GL(1)\times Sp(2n-2)$ in $Sp(2n)$.\label{ahs7}
\smallbreak\noindent
{\rm -}\enspace $G_2$ in $Spin(8)$, the connected subgroup of automorphisms of the complex Cayley octonions, as above $G_2\subset Spin(7)\subset Spin(8)$.
\smallbreak\noindent
{\rm -}\enspace \label{ahs9} $SL(3)$ in $G_2$, given by the root subsystem of long roots.
\end{proposition}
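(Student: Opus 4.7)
The plan is to handle the nine subgroups case by case, along the following uniform scheme. First, $G/H$ is known to be spherical for every $H$ in the list: all these pairs appear in the classifications of Krämer \cite{Kr}, Brion \cite{Bri87} and Mikityuk \cite{Mi}, and each $H$ being reductive, $G/H$ is in fact affine. Second, one checks that $N_G(H)$ is selfnormalising, so that by Corollary 7.6 of \cite{Kn96} the homogeneous space $G/N_G(H)$ has a wonderful embedding; the computation of $N_G(H)/H$ is elementary in each case, reducing to that of the normaliser of the reductive part. Third, and most importantly, we identify the associated spherical system.

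For this last step, Lemma~\ref{affine-non-symmetric} already restricts the candidates to a short explicit list of cuspidal indecomposable spherical systems without simple spherical roots; it suffices in each case to single out the right entry. To do so, we collect three invariants of $G/N_G(H)$: (i) the parabolic $P_{G/N_G(H)}$, whose simple roots give $S^p$ (read off directly from the embedding of $H$ in a Levi, since the colours of $G/H$ are $B$-divisors); (ii) the cardinality of $\Sigma$, determined by the dimension identity~\eqref{eq:dimGH}; and (iii) the number of colours, determined by the rank identity~\eqref{eq:rankH} once $\mathrm{rank}\,\Xi(N_G(H))$ is computed from the character group. In most situations these invariants already pick out a unique entry of Lemma~\ref{affine-non-symmetric}. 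In the remaining cases we exploit natural $G$-equivariant morphisms from $G/N_G(H)$ to wonderful embeddings of related symmetric spaces (for instance, $SL(2)\times Sp(2n)/SL(2)\times Sp(2n-2)$ projects onto the wonderful compactification of the symmetric space $Sp(2n)/SL(2)\times Sp(2n-2)$), then apply Proposition~\ref{morphisms} to read off the quotient and complete the identification of $\Sigma$.

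The main obstacle lies in the diagonal/product cases, namely the pairs involving $SL(2)$ diagonally embedded in $SL(2)\times SL(2)$ (items with $SL(2)\times Sp(2n-2)$ or $SL(2)\times Sp(2n_1-2)\times Sp(2n_2-2)$). Here the extra spherical root coming from the $SL(2)$-diagonalisation has to be pinned down explicitly; a convenient route is to realise the relevant wonderful variety as a fibre product $X_1\times_{X_3}X_2$ of symmetric wonderful varieties through the decomposition criterion recalled above, and to verify that the chosen distinguished subsets satisfy the four conditions of decomposition. The remaining subtle points are the three exceptional embeddings $G_2\subset Spin(7)$, $Spin(7)\subset Spin(9)$ and $G_2\subset Spin(8)$, where one invokes the octonionic (or spin) representation to describe the generic isotropy and the colours explicitly, together with triality for the $Spin(8)$ case. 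Once these computations are in place, matching against the candidates produced by Lemma~\ref{affine-non-symmetric} is immediate.
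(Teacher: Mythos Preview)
Your proposal is correct in its broad strategy and shares the core ingredients with the paper's proof: sphericity via the Kr\"amer--Brion--Mikityuk classification, wonderfulness of $N_G(H)$ via Knop's result (a normaliser being automatically selfnormalising), and identification of the spherical system via the dimension and rank identities of Lemma~\ref{ids} together with the affinity criterion of Lemma~\ref{affine-non-symmetric}.

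Where you diverge from the paper is in the amount of work you propose for the case analysis. The paper takes two shortcuts that eliminate almost all of your ``subtle points''. First, all rank~1 and rank~2 cases are already settled by Wasserman's classification \cite{W}; this disposes of $G_2\subset Spin(7)$, $Spin(7)\subset Spin(9)$, $SL(2)\times Sp(2n-2)\subset SL(2)\times Sp(2n)$, $GL(1)\times Sp(2n-2)\subset Sp(2n)$, and $SL(3)\subset G_2$ without any octonion, triality, or fiber-product computations. Second, the two higher-rank families $Sp(n)\subset SL(n+1)$ and $GL(n)\subset Spin(2n+1)$ are model spaces already treated by Luna in \cite{Lu07}. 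What remains is only the rank-3 cases $aa(1,1)+c^\ast(n_1)+c^\ast(n_2)$ and $ds^\ast(4)$, and for these the identities of Lemma~\ref{ids} alone pin down the spherical system among the finitely many candidates in Lemma~\ref{affine-non-symmetric}. So your machinery of fiber products and equivariant morphisms to symmetric quotients, while not wrong, is unnecessary.

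One small point to tighten: you invoke Lemma~\ref{affine-non-symmetric} as if it automatically applies, but that lemma is stated for cuspidal indecomposable spherical systems, so one must first argue that the spherical system of $G/N_G(H)$ is indeed cuspidal and indecomposable. The paper handles this by noting that the $G/H$ listed are precisely the non-symmetric affine spherical spaces that are not non-trivial products, and then checking via Lemma~\ref{ids} that the resulting systems are indecomposable. Also, your parenthetical ``read off directly from the embedding of $H$ in a Levi'' for computing $S^p$ is not a correct description of how the stabiliser of colours is determined.
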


\begin{proof}
By \cite{Bri87} (and~\cite{Kr,Mi}), we know all connected spherical subgroups 
$H\subset G$ such that $G/H$ is affine: for $G$ simply connected, we have listed in the proposition all those which cannot be written as non-trivial product $G_1/H_1\times G_2/H_2$, are not symmetric and such that $G/N_G(H)$ has no simple spherical roots. The spherical system of $G/N_G(H)$ is thus cuspidal and satisfies the above criterion of affinity.

We already know rank 1 and rank 2 cases from \cite{W}.
The first and the second cases were worked out in~\cite{Lu07}; 
these are model spaces (\textit{see} Section~\ref{ss:model}).

To find all the corresponding spherical systems one can use the identities of Lemma \ref{ids}: they turn out to be indecomposable and thus in the list of Lemma \ref{affine-non-symmetric}.
\end{proof}

\begin{remark}
When $G$ is of type $\mathsf C_n$ or $\mathsf G_2$, the subgroup $H$
itself is also wonderful, but in type $\mathsf C_n$ the spherical
system of $H$ contains a simple spherical root (\textit{see} \cite{W}).
The corresponding case in type $\mathsf G_2$ is labeled as $(64)$ in the list of Section 5.1. 
\end{remark}

\subsubsection{Adjoint nilpotent orbits}

\begin{proposition}
The primitive spherical systems corresponding to a spherical nilpotent orbit are:
\smallbreak
\noindent\enspace
(\ref{bcast(n)}) $bc^\ast(n)$ for $n$ odd,
(\ref{acast(p)+b'(q)}) $ac^\ast(p)+b'(q)$ for $p$ odd,
\smallbreak\noindent\enspace
(\ref{dcast(n)}) $dc^\ast(n)$ for $n$ even,
(\ref{acast(p)+d(q)}) $ac^\ast(p)+d(q)$ for $p$ odd,
\smallbreak\noindent\enspace
(\ref{ecast(n)}) $ec^\ast(n)$ for $n=7,8$,
(\ref{ef(6)+a(2)}) $ef(6)+a(2)$,
\smallbreak\noindent\enspace
(\ref{aa(2,2)+a(2)}) $aa(2,2)+a(2)$,
(\ref{ac(5)+a(2)}) $ac(5)+a(2)$,
\smallbreak\noindent\enspace
(\ref{ao(2)+a(2)}) $ao(2)+a(2)$, 
(\ref{g''(2)}) $g^\ast(2)$.
\end{proposition}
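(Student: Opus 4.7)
The strategy is to run through Panyushev's classification of spherical adjoint nilpotent orbits (recalled in Appendix~\ref{nilpotent orbits}) and, for each such orbit $G\cdot e$ in a simple Lie algebra $\mathfrak{g}$, attach the stabiliser $H=G_{[e]}$ of the line $\mathbb{C}e\subset\mathfrak{g}$. Since $G\cdot e$ is spherical so is $G_e$, and as $G_{[e]}/G_e\simeq\mathbb{G}_m$ (because $\mathbb{C}^\ast\cdot e\subset G\cdot e$ lifts to a one-parameter subgroup of $G_{[e]}$) it follows that $G/G_{[e]}$ is spherical of rank one less than $G\cdot e$. A direct check shows that $G_{[e]}$ is selfnormalising, so by Corollary~7.6 of \cite{Kn96} it is wonderful. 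Requiring that the corresponding spherical system contain no simple spherical root forces $(\mathrm{ad}\,e)^3\neq 0$, so only spherical orbits of height three are relevant; Panyushev supplies an explicit finite list of these.

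For each such orbit I would then compute the spherical system of $G/G_{[e]}$ as follows. Choose an $\mathfrak{sl}_2$-triple $(e,h,f)$ compatible with $B$ and let $\mathfrak{g}=\bigoplus_{i=-3}^{3}\mathfrak{g}(i)$ be the induced grading; the Bala--Carter characteristic encodes $h$ and thus the Jacobson--Morozov parabolic $P_e$, from which one reads off $P_{G/H}\supseteq P_e$ and so $S^p$. The set $\Sigma$ is obtained from the $T$-weights of the transverse slice to $G\cdot[e]\subset\mathbb{P}(\mathfrak{g})$ at $[e]$, i.e.\ from the $T$-action on the positive part of $\mathfrak{g}$ modulo the image of $\mathrm{ad}\,e$. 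The two identities in Lemma~\ref{ids} then serve as a crucial consistency check and pin down $(S^p,\Sigma)$ up to finitely many combinatorial alternatives, which one compares with the list of Section~\ref{primitiveslist}; primitivity (cuspidality and indecomposability) follows from the simplicity of $\mathfrak{g}$ and the irreducibility of $G\cdot e$.

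The main obstacle is the case-by-case verification, which has to be performed for each of the characteristics appearing in Panyushev's classification, namely $(10\dots01)$ giving $bc^\ast(n)$ for $n$ odd; $(10\dots010\dots0)$ giving either $ac^\ast(p)+b'(q)$ with $p$ odd in type $\mathsf{B}$ or $ac^\ast(p)+d(q)$ with $p$ odd in type $\mathsf{D}$; $(10\dots011)$ giving $dc^\ast(n)$ for $n$ even; $(0100001)$ in $\mathsf{E}_7$ giving $ec^\ast(7)$; $(01000000)$ and $(00000010)$ in $\mathsf{E}_8$ giving $ec^\ast(8)$ and $ef(6)+a(2)$; $(000100)$ in $\mathsf{E}_6$ giving $aa(2,2)+a(2)$; $(0010000)$ in $\mathsf{E}_7$ giving $ac(5)+a(2)$; $(0100)$ in $\mathsf{F}_4$ giving $ao(2)+a(2)$; and $(10)$ in $\mathsf{G}_2$ giving $g^\ast(2)$. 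The rank one entry $g^\ast(2)$ is already covered by Ahiezer's classification and the rank two entry $ao(2)+a(2)$ by Wasserman's list, so the genuine work concerns only the higher-rank items; in each of these, once the Levi of $P_e$ and the isotypic decomposition of $\mathfrak{g}(1)\oplus\mathfrak{g}(3)$ under $G_e^{\mathrm{red}}$ are written down, the spherical roots can be identified by an $\mathfrak{sl}_2$-weight bookkeeping and matched with the predicted diagram in Section~\ref{primitiveslist}.
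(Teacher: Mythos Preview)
Two preliminary issues. The reason height-$2$ orbits are excluded is not that they would produce simple spherical roots: for height~$2$ one has $H^u=P^u$, so $G/N_G(H)$ is a parabolic induction of the symmetric space $L/N_L(K)$ and is therefore \emph{not cuspidal}; it is primitivity, not the absence of simple roots in $\Sigma$, that fails. Also, the inclusion between $P_{G/H}$ and the Jacobson--Morozov parabolic goes the other way (since $G_{[e]}\subset P_e$ one gets $P_{G/H}\subseteq P_e$), and in either direction it only yields a containment $S^p_{G/H}\subseteq\{\alpha:\alpha(h)=0\}$, not $S^p$ itself; for instance $S^p=\emptyset$ in case $bc^\ast(n)$ although the Levi of $P_e$ is of type $\mathsf A_{n-2}$.

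The main gap is your recipe for $\Sigma$. Spherical roots are the $T$-weights of $T_yX/T_yY$ at the $B^-$-fixed point of the \emph{closed} orbit of the wonderful embedding; they are not read off from a transverse slice at $[e]$, which lies in the open orbit and is not even $T$-fixed, nor is $\mathbb P(\mathfrak g)$ the wonderful compactification. The paper avoids any direct computation of $\Sigma$: from the inclusion $N_G(H)\subset N_G(H)P^r$ one obtains a dominant $G$-morphism with connected fibres onto the wonderful embedding of $G/N_G(H)P^r$, and the latter is a parabolic induction of the symmetric space $L/N_L(K)$, hence has a known spherical system by Proposition~\ref{sphericalsyst-symmetric}. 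By Proposition~\ref{morphisms} this known system is then a quotient $(S^p/\Delta',\Sigma/\Delta')$ of the one sought, and one checks case by case that exactly one spherical system has this quotient and satisfies~(\ref{eq:dimGH}). This quotient-by-a-known-overgroup argument is the missing ingredient; without it your ``$\mathfrak{sl}_2$-weight bookkeeping'' is not a well-defined procedure for determining $\Sigma$.
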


\begin{proof}
We will keep the notation of Appendix~\ref{nilpotent orbits} 
and as recalled there, 
we shall be concerned here only by nilpotent elements of height 3. 
One can compute the spherical system 
of the (non-necessarily primitive) symmetric space $L/N_L(K)$ 
(by means for example of Proposition~\ref{sphericalsyst-symmetric}); 
note that the Lie algebras $\mathfrak l$ and $\mathfrak k$ 
are given in the appendix.
And in turn, one gets the spherical system of $G/N_G(H)P^r$ 
by parabolic induction from that of $L/N_L(K)\simeq P/N_G(H)P^r$.
Here $P^r$ denotes the radical of the parabolic subgroup $P$.

Moreover, there exists a dominant morphism with connected fibers 
between the wonderful embeddings of $G/N_G(H)$ and of $G/N_G(H)P^r$ 
(\textit{see} Proposition~\ref{morphisms}).
We then check case by case that there exists only one spherical system 
admitting the spherical system of $N_G(H)P^r$ as quotient 
and satisfying the identity in (\ref{eq:dimGH}). 
This gives the spherical system of $N_G(H)$. 
The proposition follows.
\end{proof}

\begin{lemma}
Let $e\in\mathfrak g$ be a nilpotent element and $[e]$ be the line spanned by $e$. If $H=G_e$ is spherical then $N_G(H)/H$ is one-dimensional and $N_G(H)=G_{[e]}$.
\end{lemma}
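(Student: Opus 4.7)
The plan is to produce a canonical one-dimensional torus inside $N_G(H)/H$ via an $\mathfrak{sl}_2$-triple, and then to use the sphericity hypothesis to show that this torus exhausts $N_G(H)/H$.

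First I would note that $H\subseteq G_{[e]}$ trivially, while $G_{[e]}\subseteq N_G(H)$ holds because $H=G_e=\ker\chi$ is normal in $G_{[e]}$, where $\chi\colon G_{[e]}\to\mathbb C^*$ is the character determined by $g\cdot e=\chi(g)\,e$. By Jacobson--Morozov, embed $e$ in an $\mathfrak{sl}_2$-triple $(e,h,f)$; then $\exp(t h)\cdot e=e^{2t}e$ shows that $\exp(\mathbb C h)\subseteq G_{[e]}$ and that $\chi$ is surjective, whence $G_{[e]}/H\cong\mathbb C^*$. It therefore suffices to prove the reverse inclusion $N_G(H)\subseteq G_{[e]}$, from which both parts of the lemma follow at once.

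For this reverse inclusion, take $n\in N_G(H)$. Then $n\cdot e$ has stabilizer $nHn^{-1}=H$, so $n\cdot e$ belongs to the subspace $\mathfrak g^H$ of $H$-fixed vectors in $\mathfrak g$. This subspace coincides with the centre $Z(\mathfrak g^e)$ of the Lie algebra $\mathfrak g^e=\operatorname{Lie}H$: indeed, any $x\in\mathfrak g^H$ centralizes $\mathfrak g^e\ni e$ and therefore lies in $\mathfrak g^e$ itself. The problem thus reduces to verifying
\[
(G\cdot e)\cap Z(\mathfrak g^e)=\mathbb C^*\,e,
\]
for this forces $n\cdot e\in\mathbb C^*e$ and hence $n\in G_{[e]}$.

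The main obstacle is this last equality, and this is exactly where sphericity enters. By Panyushev's criterion, sphericity of $H=G_e$ is equivalent to $(\operatorname{ad}e)^4=0$, so the eigenspace decomposition $\mathfrak g=\bigoplus_{|k|\le 3}\mathfrak g_k$ of $\operatorname{ad}h$ has height at most $3$; consequently $\mathfrak g^e\subseteq\bigoplus_{k\ge 0}\mathfrak g_k$ with $\mathfrak g_2,\mathfrak g_3\subseteq\mathfrak g^e$. The plan is to combine this grading with the general fact that $N_G(H)/H$ is diagonalizable for spherical $H$, so that its action on the invariant subvariety $\{x\in\mathfrak g\colon G_x=H\}=N_G(H)\cdot e$ splits into characters; since the subgroup $\exp(\mathbb C h)H/H$ already acts on $e$ as a weight vector of $\operatorname{ad}h$-weight $2$ and produces $\mathbb C^*e$, the weight analysis of $Z(\mathfrak g^e)$ under $\operatorname{ad}h$ rules out any additional character on the orbit. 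An alternative route is to verify the equality case-by-case using the classification of spherical nilpotent orbits reproduced in Appendix~\ref{nilpotent orbits}.
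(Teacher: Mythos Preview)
Your setup and the reduction to $N_G(H)\cdot e=\mathbb C^*e$ via $\mathfrak g^H\subseteq Z(\mathfrak g^e)$ are correct, and you rightly identify diagonalisability of $N_G(H)/H$ as the key consequence of sphericity---this is precisely the paper's input as well. The gap is the final step: your ``weight analysis of $Z(\mathfrak g^e)$ under $\operatorname{ad} h$'' is not an argument. Knowing that $N_G(H)/H$ is diagonalisable and contains the one-parameter subgroup $\exp(\mathbb C h)H/H$ does not by itself bound its dimension from above; nothing you wrote controls $\dim\mathfrak g^H$ or the size of the diagonalisable orbit inside it, and $e$ being an $\operatorname{ad} h$-weight vector puts no constraint on additional torus directions. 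The case-by-case fallback would succeed but is not carried out.

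The paper closes the gap with a second, classification-free ingredient that you are missing: for \emph{any} nilpotent $e$ (sphericity plays no role here), the maximal reductive subalgebra of $\mathfrak n_{\mathfrak g}(\mathfrak g_e)/\mathfrak g_e$ is exactly $\mathbb C h$. Since $\operatorname{Lie}\bigl(N_G(H)/H\bigr)$ lies in this quotient and is abelian (hence reductive) by diagonalisability, it must equal $\mathbb C h$, giving $\dim N_G(H)/H=1$ immediately. This bypasses the orbit analysis entirely; citing the same structural fact would repair your proof and make the detour through $Z(\mathfrak g^e)$ unnecessary.
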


\begin{proof}
Recall the notation of Appendix~\ref{nilpotent orbits}.
The subspace $\mathbb C h$ is the maximal reductive subalgebra contained in $\mathfrak n_\mathfrak g(e)/\mathfrak g_e$  where
$\mathfrak n_\mathfrak g(e)$ is the Lie algebra of the normaliser $N_G(H)$; \emph{see} for example \cite{BK}.
If $H$ is spherical then  $N_G(H)/H$ is a diagonalisable group; \emph{see} for example \cite{Bri97}.
The lemma follows readily.
\end{proof}

\begin{remark}
Due to the previous lemma, we have listed in Section~\ref{primitiveslist} the wonderful subgroups corresponding to the above spherical systems as stabilisers of the line $[e]$ spanned by a nilpotent element $e$.
\end{remark}

\begin{remark}
The fact that the above spherical systems
do not contain simple spherical roots implies the saturation of the
weight cone of spherical adjoint nilpotent orbits (Conjecture~5.12 in
\cite{Pa03}, Section~1.3 in \cite{Lu07}).
\end{remark}

\subsubsection{Model homogeneous spaces}\label{ss:model}
Spherical systems of model spaces have already been computed by Luna.

\begin{proposition}(\cite{Lu07})
The spherical system of a model $G$-variety $X$ does not depend on the
isogeny type of $G$ except in type $\mathsf B$.
\smallbreak
\noindent{\rm(i)}\enspace
If $G$ is adjoint of type $\mathsf B$, the spherical system is
(\ref{bc'(n)}) $bc'(n)$. 
\smallbreak
\noindent{\rm(ii)}\enspace
In the simply connected case, the spherical roots of $X$ can all be
written as the sum of two non-orthogonal simple roots.
Their spherical systems are: (\ref{acast(n)}) $ac^\ast(n)$,
(\ref{bcast(n)}) $bc^\ast(n)$, (\ref{acast(p)+c''(q)})
$ac^\ast(p)+c^\ast(q)$ for $q=2$, (\ref{dcast(n)}) $dc^\ast(n)$,
(\ref{ecast(n)}) $ec^\ast(n)$, (\ref{fcast(4)}) $fc^\ast(4)$ and
(\ref{g''(2)}) $g^\ast(2)$.
\end{proposition}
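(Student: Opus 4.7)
My plan, following Luna \cite{Lu07}, is to argue case by case using the explicit description of the principal model subgroup $H$ of $G$ recorded in Appendix~\ref{model spaces}. Given the Levi decomposition $H=L\cdot H^u$ from that appendix, I would first identify the parabolic subgroup $P\supseteq B$ stabilising the open $BH$-orbit in $G/N_G(H)$ and thereby read off the set $S^p_{G/N_G(H)}$ of simple roots associated to this parabolic. Applying the identity \eqref{eq:dimGH} of Lemma~\ref{ids} then yields $\mathrm{card}\,\Sigma$, while \eqref{eq:rankH} pins down $\mathrm{card}\,\Delta-\mathrm{card}\,\Sigma$; the defining property of a model space, namely that its coordinate ring realises every simple $G$-module exactly once, forces $\Xi(N_G(H))$ to be trivial when $G$ is simply connected, which rigidly constrains this last cardinality.

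Once $S^p$ and the two cardinalities are known, the axioms ($\Sigma 1$), ($\Sigma 2$), (S) and (R') together with the list of rank~$1$ spherical roots reproduced in Section~\ref{Luna's diagrams} leave only one candidate spherical system in each case, and I then verify that this candidate coincides with one of the seven entries listed in part~(ii). The shape claim --- that every $\sigma\in\Sigma$ is a sum $\alpha+\beta$ of two non-orthogonal simple roots --- follows by direct inspection of the diagrams of $ac^\ast(n)$, $bc^\ast(n)$, $ac^\ast(p)+c^\ast(q)$ for $q=2$, $dc^\ast(n)$, $ec^\ast(n)$, $fc^\ast(4)$ and $g^\ast(2)$. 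For part~(i), the passage to the adjoint type $\mathsf B$ case replaces the simply connected model subgroup $H=GL(n)\subset Spin(2n+1)$ by the image of $N_G(GL(n))$ in the adjoint quotient; the spherical system of this homogeneous space was already shown in the preceding subsection on non-symmetric affine homogeneous spaces to be $bc'(n)$. One then checks that this is indeed a model space for the adjoint group because the half-integer weights missing from the adjoint weight lattice correspond precisely to the characters quotiented out by passing to the adjoint group.

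The main obstacle I expect is the combinatorial identification, in each of the seven cases of~(ii), of the equivalence relation $\alpha\sim\beta$ on $S\setminus S^p$, that is, which pairs of simple roots give rise to coinciding colours under the rule $\alpha+\beta\in\Sigma$. Determining this amounts to tracing which pairs of $B$-eigenfunctions on $G/H$ actually coincide, which in turn is read off the explicit $L$-module structure of $H^u$ recorded in Appendix~\ref{model spaces}. The calculation is mechanical but must be carried out separately for each simple type, and it is the unique step where the precise structure of the unipotent radical of $H$ is genuinely needed beyond mere dimension counts.
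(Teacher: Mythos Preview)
The paper does not supply its own proof of this proposition: it is quoted verbatim from Luna \cite{Lu07}, preceded only by the sentence ``Spherical systems of model spaces have already been computed by Luna.'' There is therefore no argument in the present paper to compare your proposal against.

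Your sketch is a plausible reconstruction of how such a computation could be carried out, and the broad strategy---start from the explicit $H$ in Appendix~\ref{model spaces}, use the identities of Lemma~\ref{ids}, and then pin down the spherical system by the axioms---is in the spirit of the existence arguments used elsewhere in Section~6. One point deserves care: your assertion that the model property forces $\Xi(N_G(H))$ to be trivial in the simply connected case is not quite right. For instance, in type $\mathsf A_n$ with $n$ even one has $N_G(H)=H=Sp(n)\times GL(1)$, whose character group has rank~$1$; correspondingly, for $ac^\ast(n)$ one finds $\mathrm{card}\,\Delta-\mathrm{card}\,\Sigma=n-(n-1)=1$. What the model property actually gives is that $\Xi_{G/H}$ is the full weight lattice of $G$, hence of rank~$n$; this constrains the lattice spanned by $\Sigma$ (for the wonderful variety $G/N_G(H)$) rather than forcing $\Xi(N_G(H))=0$. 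If you repair that step the rest of your outline goes through.
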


Note that the cases 
(\ref{bcast(n)}) with $n$ odd, (\ref{dcast(n)}) with $n$ even,
(\ref{ecast(n)}) and (\ref{g''(2)}) 
belong also to the family of nilpotent orbits whereas the case
(\ref{acast(n)}) with $n$ even and (\ref{bc'(n)}) are affine.

\subsubsection{A few remaining cases.}
From the list of primitive spherical systems there are a few cases not included in the above families.

Most of them are of rank 1 or 2: 
\smallbreak\noindent\enspace
(\ref{b''(n)}) $b^\ast(n)$, (\ref{a(p)+b(q)}) $a(p)+b(q)$, (\ref{a(p)+b'(q)}) $a(p)+b'(q)$,
\smallbreak\noindent\enspace
(\ref{c''(n)}) $c^\ast(n)$, (\ref{ca(1+q+1)}) $ca(1+q+1)$, (\ref{ds(n)}) $ds(n)$, (\ref{a(p)+d(q)}) $a(p)+d(q)$,
\smallbreak\noindent\enspace
 (\ref{fa(1+2+1)}) $fa(1+2+1)$, (\ref{fd(4)}) $fd(4)$.

The respective wonderful subgroups are already described in \cite{W}
and reported in the list of primitive cases.

The remaining spherical systems are:
\smallbreak\noindent\enspace (\ref{acast(p)+b(q)}) $ac^\ast(p)+b(q)$,
\smallbreak\noindent\enspace (\ref{acast(p)+b'(q)}) $ac^\ast(p)+b'(q)$ for $p$ even,
\smallbreak\noindent\enspace (\ref{aa(1+p+1)+c''(q)}) $aa(1+p+1)+c^\ast(q)$ 
\smallbreak\noindent\enspace (\ref{acast(p)+c''(q)}) $ac^\ast(p)+c^\ast(q)$ for $q>2$,
\smallbreak\noindent\enspace (\ref{acast(p)+d(q)}) $ac^\ast(p)+d(q)$ for $p$ even.

Those in type $\mathsf B$ and $\mathsf D$ are very similar to some cases arising from spherical adjoint nilpotent orbits of height 3. The two cases in type $\mathsf C$ have already been treated in \cite{Pe03}.

The cases (\ref{acast(p)+b'(q)}) and (\ref{acast(p)+d(q)}) are easily handled by localisation from the $p$ odd analogues.

\smallbreak
\paragraph{\textbf{Case \ref{acast(p)+b(q)}}}
Consider the wonderful subgroup $H$ with spherical system
$ac^\ast(p)+b'(q)$ and let $H^\circ$ be the identity connected
component of $H$. Then $H^\circ$ is the investigated wonderful subgroup. 
Indeed, note that  $H^\circ$ has index $2$ in $H$ and $H=N_G(H^\circ)$. 
To prove that $H^\circ$ is wonderful one needs to compute explicitly
the colours and the spherical roots. In comparison with $H$, one sees
easily that the colours are represented by the same regular $B\times
H$-semiinvariant functions on $G$ and that the spherical roots are the
same except for the last one which is just divided by 2. The subgroup
$H^\circ$ is then wonderful with spherical system equal to
$ac^\ast(p)+b(q)$.

\smallbreak
\paragraph{\textbf{Case \ref{aa(1+p+1)+c''(q)}.}}
Let $G=Sp(2n)$.
Take $H=K\cdot H^u$  with $K=GL(p+1)\times SL(2)\times Sp(2n-2p-4)$
and the Lie algebra of $H^u$, as a $K'$-module, being isomorphic to
the direct sum $V(\omega_1+\omega_1'')\oplus V(2\omega_1)$. The
subgroup $H$ is spherical, selfnormalising hence wonderful and it is
not a parabolic induction. We have: $H\subset H_1=KQ^u\subset Q$ where
$Q$ is the standard parabolic of $G$ associated to $\alpha_{p+1}$.
Note that $H_1$ is wonderful in $G$:
$G/H_1$ is a parabolic induction of the symmetric space $L/K$, that is
C II.  
One thus checks that there is only one spherical system having such a
quotient: it is $aa(1+p+1)+c^\ast(q)$.

\smallbreak
\paragraph{\textbf{Case \ref{acast(p)+c''(q)}.}}
This is a generalisation of the model case in type $\mathsf C_n$ of Paragraph~\ref{ss:model}.
Here $G=SO_ {2n}$.
Let  $K_1$ be the symmetric group of $G$ of type $\mathsf C_{p/2}\times \mathsf C_{n-p/2}$
if $p$ is even
and of type $\mathsf C_{(p+1)/2}\times \mathsf C_{n-(p+1)/2}$ if $p$ is odd.
Take $H$ to be the parabolic subgroup of semisimple type $\mathsf C_{p/2}\times \mathsf C_{n-1-p/2}$ if $p$ is even
and of type $\mathsf C_{(p-1)/2}\times \mathsf C_{n-(p+1)/2}$ if $p$ is odd.
In all cases the subgroup $H$ is spherical and selfnormalising hence wonderful in $G$.
It follows that the spherical system of $G/K_1$ is a quotient of that of $G/H$.
Note that the spherical system of $G/K_1$ is $cc(p'+q')$, where $p'=(p/2)-1$ if $p$ is even, 
and $p'=(p-1)/2$ if $p$ is odd.
Moreover, $H=K\cdot H^u$ is included in $H_1=K\cdot Q^u$,
where $Q$ is the standard parabolic subgroup of $G$ of semisimple type $\mathsf C_{n-1}$.
The subgroup $H_1$ is wonderful in $G$; its spherical system is a parabolic induction of $cc(p'+(q'-1))$.
The spherical system $ac^\ast(p)+c^\ast(q)$ is the only one, with the right dimension, admitting  the spherical systems of $G/K_1$ and $G/H_1$ as quotients.

\appendix

\section{Symmetric spaces}
\label{Symmetric varieties}

Cartan's classification of involutions of semisimple groups appears
usually in the literature in terms of Satake diagrams or Kac diagrams
(\textit{see} for instance Chapter~X in \cite{He78}).
Analogously to these diagrams, the so-called restricted root systems determine (up to conjugation) the involutions.
Hence Cartan's classification can be also given in terms of restricted root systems.

Before reproducing this classification, we shall recall
the definition of a restricted root system associated to an involution
and how this root system was related by Vust to the sets of spherical roots of the corresponding symmetric spaces (recall that a symmetric space is spherical).

For an expository on symmetric varieties, one may also refer to~\cite{Ti}.

\subsection{Restricted root system (after~\cite{Vu})}

Given a non-identical involution $\sigma$ of a semisimple group $G$,
let $T_{-1}$ be a maximal $\sigma$-anisotropic torus, namely
a torus on which $\sigma$ acts as the inversion and
which is maximal for this property.
Take a maximal torus $T$ containing $T_{-1}$. Then $T$ is $\sigma$-stable;
the lattice $\Lambda$ of $T$-weights inherits an involution $\sigma$
and the root system  $\Phi$ attached to $T$ is $\sigma$-stable.

Then the set $\tilde\Phi$ of non-zero elements
$\alpha-\sigma(\alpha)$, for $\alpha\in\Phi$, is a root system for
$\Lambda(T_{-1})\otimes\mathbb R$ which may not be reduced.
It is called \textit{the restricted root system associated to $\sigma$}. 

One may choose a Borel subgroup $B$ such that:
if $\alpha$ is a  positive root relative to $B$ then $\sigma(\alpha)$
is either $\alpha$ or it is a negative root. 
Let $H$ be a symmetric group corresponding to $\sigma$ \textit{i.e.}\
$G^\sigma \subset H\subset N_G(G^\sigma)$.
Then $BH$ is open in $G$ and the set
$$
\tilde{\Delta}
=\{\alpha_i-\sigma(\alpha_i):\alpha_i\in\Delta\}\setminus \{0\}
$$
is a basis of $\tilde\Phi$.

Further the lattice $\Xi_{G/H}$ of $B$-weights of $\mathbb C(G/H)$ can
be identified to the character group of $T_{-1}/(T_{-1}\cap H)$ and
the cone of $G$-invariant valuations $\mathcal V(G/H)$ is the
antidominant Weyl chamber of the dual root system $\tilde{\Phi}^\vee$.
Hence the elements of $\tilde{\Delta}$ are equal (up to a scalar) to
the spherical roots of $G/H$ (Proposition~2 in Section~2.4 of \cite{Vu}).

\subsection{Cartan's classification}
Every symmetric space (for a connected reductive group) is a product of a torus, symmetric spaces $G_1\times G_1/G_1$ (with $G_1$ simple and diagonally embedded) and symmetric spaces for simple groups.
 
If $G=G_1\times G_1$ with $G_1$ simple then $\tilde\Phi=\langle\alpha_1+\alpha_1',\dots,\alpha_n+\alpha_n'\rangle$.

The classification for $G$ simple is presented in the following list.
It is reproduced from~\cite{He78} but giving the restricted root system instead of the Satake diagram.
The restricted roots systems have been computed case by case by means of the recalls made above; \emph{see} also~\cite{Vu} where some cases are worked out in details. 
More precisely, we give the Cartan label, the restricted root system  and the Lie algebra $\mathfrak h$ of the fixed point subgroup. 
The Cartan label specifies the type of $G$. The semisimple rank of $G$ is denoted by $n$.

\begin{itemize}
\item[A I:] $\tilde\Phi=\langle 2\alpha_1,\dots,2\alpha_n \rangle$ of type $\mathsf A_n$, $\mathfrak h=\mathfrak{so}(n+1)$.
\item[A II:] $n$ odd ($n\geq3$), $\tilde\Phi=\langle \alpha_1+2\alpha_2+\alpha_3,\dots,\alpha_{n-2}+2\alpha_{n-1}+\alpha_n \rangle$ of type $\mathsf {A}_{(n-1)/2}$, $\mathfrak h=\mathfrak{sp}(n+1)$.
\item[A III:] $n=2p+q$ ($p,q\geq1$),
\begin{itemize} 
\item[] if $q\geq2$, $\tilde\Phi=\langle \alpha_1+\alpha_{n},\dots,\alpha_p+\alpha_{n-p+1},\alpha_{p+1}+\dots+\alpha_{p+q} \rangle$ of type $\mathsf {BC}_{p+1}$, $\mathfrak h=\mathfrak{sl}(p+1)+\mathfrak{sl}(p+q)+\mathfrak{gl}(1)$;
\item[] if $q=1$, $\tilde\Phi=\langle \alpha_1+\alpha_{n},\dots,\alpha_p+\alpha_{n-p+1},2\alpha_{p+1} \rangle$ of type $\mathsf {C}_{p+1}$, $\mathfrak h=\mathfrak{sl}(p+1)+\mathfrak{sl}(p+1)+\mathfrak{gl}(1)$.
\end{itemize}
\item[A IV:]
\begin{itemize} 
\item[] if $n\geq2$, $\tilde\Phi=\langle \alpha_1+\dots+\alpha_n \rangle$ of type $\mathsf {A}_1$, $\mathfrak h=\mathfrak{gl}(n)$;
\item[] if $n=1$, $\tilde\Phi=\langle 2\alpha_1 \rangle$ of type $\mathsf {A}_1$, $\mathfrak h=\mathfrak{gl}(1)$.
\end{itemize}
\item[B I:] $n=p+q$ ($p,q\geq1$), $\tilde\Phi=\langle 2\alpha_1,\dots,2\alpha_p,2\alpha_{p+1}+\dots+2\alpha_n \rangle$ of type $\mathsf {B}_{p+1}$, $\mathfrak h=\mathfrak{so}(p+1)+\mathfrak{so}(2n-p)$.
\item[B II:] $\tilde\Phi=\langle 2\alpha_1+\dots+2\alpha_n \rangle$ of type $\mathsf {A}_1$, $\mathfrak h=\mathfrak{so}(2n)$.
\item[C I:] $\tilde\Phi=\langle 2\alpha_1,\dots,2\alpha_n \rangle$ of type $\mathsf {C}_n$, $\mathfrak h=\mathfrak{gl}(n)$.
\item[C II:] $n=p+q$ ($p\geq0$ even, $q\geq2$), 
\begin{itemize}
\item[] if $q\geq3$, $\tilde\Phi=\langle \alpha_1+2\alpha_2+\alpha_3,\dots,\alpha_{p-1}+2\alpha_p+\alpha_{p+1},\alpha_{p+1}+2\alpha_{p+2}+\dots+2\alpha_{n-1}+\alpha_n \rangle$ of type $\mathsf {BC}_{(p/2)+1}$, $\mathfrak h=\mathfrak{sp}(p+2)+\mathfrak{sp}(2n-p-2)$;
\item[] if $q=2$, $\tilde\Phi=\langle \alpha_1+2\alpha_2+\alpha_3,\dots,\alpha_{n-3}+2\alpha_{n-2}+\alpha_{n-1},2\alpha_{n-1}+2\alpha_n \rangle$ of type $\mathsf {C}_{(p/2)+1}$, $\mathfrak h=\mathfrak{sp}(n)+\mathfrak{sp}(n)$.
\end{itemize}
\item[D I:] $n=p+q$ ($p\geq1$, $q\neq1$), 
\begin{itemize}
\item[] if $q\geq2$, $\tilde\Phi=\langle 2\alpha_1,\dots,2\alpha_p,2\alpha_{p+1}+\dots+2\alpha_{n-2}+\alpha_{n-1}+\alpha_n \rangle$ of type $\mathsf {B}_{p+1}$, $\mathfrak h=\mathfrak{so}(p+1)+\mathfrak{so}(2n-p-1)$;
\item[] if $q=0$, $\tilde\Phi=\langle 2\alpha_1,\dots,2\alpha_n \rangle$ of type $\mathsf {D}_n$, $\mathfrak h=\mathfrak{so}(n)+\mathfrak{so}(n)$;
\end{itemize}
\item[D II:] $\tilde\Phi=\langle 2\alpha_1+\dots+2\alpha_{n-2}+\alpha_{n-1}+\alpha_n \rangle$ of type $\mathsf {A}_1$, $\mathfrak h=\mathfrak{so}(2n-1)$.
\item[D III:] 
\begin{itemize}
\item[] if $n$ is even, $\tilde\Phi=\langle \alpha_1+2\alpha_2+\alpha_3,\dots,\alpha_{n-3}+2\alpha_{n-2}+\alpha_{n-1},2\alpha_n \rangle$ of type $\mathsf {C}_{n/2}$, $\mathfrak h=\mathfrak{gl}(n)$;
\item[] if $n$ is odd, $\tilde\Phi=$ $\langle
  \alpha_1+2\alpha_2+\alpha_3,\dots$,
  $\alpha_{n-4}+2\alpha_{n-3}+\alpha_{n-2}$, $\alpha_{n-2}+\alpha_{n-1}+\alpha_n \rangle$ of type $\mathsf {BC}_{(n-1)/2}$, $\mathfrak h=\mathfrak{gl}(n)$.
\end{itemize}
\item[E I:] $n=6$, $\tilde\Phi=\langle 2\alpha_1,\dots,2\alpha_6 \rangle$ of type $\mathsf {E}_6$, $\mathfrak h=\mathfrak{sp}(8)$.
\item[E II:] $n=6$, $\tilde\Phi=\langle \alpha_1+\alpha_6,\alpha_3+\alpha_5,2\alpha_2,2\alpha_4 \rangle$ of type $\mathsf {F}_4$, $\mathfrak h=\mathfrak{sl}(6)+\mathfrak{sl}(2)$.
\item[E III:] $n=6$, $\tilde\Phi=\langle \alpha_1+\alpha_3+\alpha_4+\alpha_5+\alpha_6,2\alpha_2+2\alpha_4+\alpha_3+\alpha_5 \rangle$ of type $\mathsf {BC}_2$, $\mathfrak h=\mathfrak{so}(10)+\mathfrak{gl}(1)$.
\item[E IV:] $n=6$, $\tilde\Phi=\langle 2\alpha_1+2\alpha_3+2\alpha_4+\alpha_2+\alpha_5,\alpha_2+\alpha_3+2\alpha_4+2\alpha_5+2\alpha_6 \rangle$ of type $\mathsf {A}_2$, $\mathfrak h=\mathfrak{f}_4$.
\item[E V:] $n=7$, $\tilde\Phi=\langle 2\alpha_1,\dots,2\alpha_n \rangle$ of type $\mathsf {E}_7$, $\mathfrak h=\mathfrak{sl}(8)$.
\item[E VI:] $n=7$, $\tilde\Phi=\langle 2\alpha_1,2\alpha_3,\alpha_2+2\alpha_4+\alpha_5,\alpha_5+2\alpha_6+\alpha_7 \rangle$ of type $\mathsf {F}_4$, $\mathfrak h=\mathfrak{so}(12)+\mathfrak{sl}(2)$.
\item[E VII:] $n=7$, $\tilde\Phi=\langle 2\alpha_1+2\alpha_3+2\alpha_4+\alpha_2+\alpha_5,\alpha_2+\alpha_3+2\alpha_4+2\alpha_5+2\alpha_6,2\alpha_7 \rangle$ of type $\mathsf {C}_3$, $\mathfrak h=\mathfrak{e}_6+\mathfrak{gl}(1)$.
\item[E VIII:] $n=8$, $\tilde\Phi=\langle 2\alpha_1,\dots,2\alpha_n \rangle$ of type $\mathsf {E}_8$, $\mathfrak h=\mathfrak{so}(16)$.
\item[E IX:] $n=8$, $\tilde\Phi=\langle 2\alpha_1+2\alpha_3+2\alpha_4+\alpha_2+\alpha_5,\alpha_2+\alpha_3+2\alpha_4+2\alpha_5+2\alpha_6,2\alpha_7,2\alpha_8 \rangle$ of type $\mathsf {F}_4$, $\mathfrak h=\mathfrak{e}_7+\mathfrak{sl}(2)$.
\item[F I:] $\tilde\Phi=\langle 2\alpha_1,2\alpha_2,2\alpha_3,2\alpha_4 \rangle$ of type $\mathsf {F}_4$, $\mathfrak h=\mathfrak{sp}(6)+\mathfrak{sl}(2)$.
\item[F II:] $\tilde\Phi=\langle \alpha_1+2\alpha_2+3\alpha_3+2\alpha_4 \rangle$ of type $\mathsf {BC}_1$, $\mathfrak h=\mathfrak{so}(9)$.
\item[G:] $\tilde\Phi=\langle 2\alpha_1,2\alpha_2 \rangle$ of type $\mathsf {G}_2$, $\mathfrak h=\mathfrak{sl}(2)+\mathfrak{sl}(2)$.
\end{itemize}

\section{Spherical nilpotent orbits}
\label{nilpotent orbits}
We collect some results on adjoint nilpotent orbits from \cite{Pa94, Pa03} unless otherwise stated.

\subsection{Main properties}
Take a nilpotent element $e$ in the Lie algebra $\mathfrak g$ of a
simple group $G$.
By the Jacobson-Morozov theorem, there exist $h,f\in\mathfrak g$ such
that $(e,h,f)$ is a $SL(2)$-triple, this means that
$$
[h,e]=2e,\quad [h,f]=-2f\quad\mbox{ and }\quad[e,f]=h.
$$

The semisimple element $h$ yields a $\mathbb Z$-grading on $\mathfrak g$:
$\mathfrak g=\oplus_{i\in\mathbb Z}\mathfrak g(i)$
where $\mathfrak g(i)=\{x\in\mathfrak g:[h,x]=ix\}.$

Set
$$\mathfrak p=\oplus_{i\geq0}\mathfrak g(i),\quad\mathfrak l=\mathfrak g(0)\quad\mbox{ and }\quad \mathfrak n_1=\oplus_{i\geq1}\mathfrak g(i).
$$
Let $\mathfrak g_e$ be the centraliser of $e$ in $\mathfrak g$. It is included in $\mathfrak p$.
Set
$$\mathfrak k=\mathfrak g_e\cap \mathfrak g(0)\quad\mbox{ and }\quad \mathfrak n=\mathfrak g_e\cap\oplus_{i\geq1}\mathfrak g(i).$$

Denote by $P$ the connected subgroup of $G$ (resp. $L$, $P^u$, $H$, $H^u$ and $K$)
with Lie algebra $\mathfrak p$ (resp. $\mathfrak l$, $\mathfrak n_1$, $\mathfrak g_e$, $\mathfrak n$ and $\mathfrak k$).
Then $P=P^u L$ (resp. $H=H^uK$) is a Levi decomposition of $P$ (resp. $H$). 
Further, 
$\dim H=\dim\mathfrak g(0) +\dim\mathfrak g(1)$ and $ \dim H^u=\dim\mathfrak g(1) +\dim\mathfrak g(2)$.

The linear map $\mathrm{ad}\,f\colon\oplus_{i\geq 3}\mathfrak g(i)\to\mathfrak n_1/\mathfrak n$ is an isomorphism of $\mathfrak k$-modules. 

Define the \textit{height} of $e$ to be the minimal positive integer such that $(\mathrm{ad}\, e)^m\neq 0$.
The adjoint orbit $G. e$ is spherical if and only if the height of $e$ is 2 or 3.

If $G/H$ is spherical then $K$ is the fixed point subgroup of an involution in the reductive group $L$. 

Assume that the height of $e$ is equal to 2. Then $\mathfrak g(i)=0$ for all $i\geq 3$ and $H^u=P^u$.
In other words, the spherical homogeneous space $G/H$ is obtained by parabolic induction from $L/K$.
The wonderful subgroup $N_G(H)$ is thus obtained by parabolic induction from a symmetric subgroup of the derived group $(L,L)$.

\subsection{Panyushev's classification}
\label{Panyushev}
Choose now a Cartan subalgebra $\mathfrak t$ and a Borel subalgebra $\mathfrak b$ 
such that $h\in\mathfrak t\subseteq\mathfrak l$ and $\mathfrak b\subseteq\mathfrak p$.
One then gets a set of simple roots $\alpha_i$ such that $0\leq\alpha_i(h)\leq 2$ for all $i$.
The sequence of the non-negative integers $\alpha_i(h)$ is called the \textit{characteristic} of the orbit;
different orbits have different characteristics.
Further, the subalgebra $\mathfrak l$ is thus the standard Levi subalgebra corresponding to the subset of simple roots with $\alpha_i(h)=0$.

We report below the classification of spherical nilpotent orbits of height 3 (the height 2 case being not cuspidal as just explained)
for $G$ simple.
We shall give the type of the simple group $G$, 
the characteristic of the nilpotent orbit,
the subalgebra $\mathfrak k$ as well as the $\mathfrak k$-module $\mathfrak n_1/\mathfrak n$ (which can be computed as in \cite{Lu01,BP}).
In case of classical type, the nilpotent orbit can be described by the size of its Jordan blocks 
and we shall also provide the corresponding partition.

\smallbreak
\noindent{\rm -}
\enspace
Type $\mathsf B_{2r+1}$, $r\geq1$, $(10\dots01)$:
$\mathfrak k\cong \mathfrak{sp}(2r)$ and $\mathfrak n_1/\mathfrak n\cong V(\omega_1)$; partition $(3,2^{2r})$.

\smallbreak
\noindent{\rm -}
\enspace
Type $\mathsf B_{2r+s+1}$, $r,s\geq1$, $(10\dots010\dots0)$ with $\alpha_{2r+1}(h)=1$:
$\mathfrak k\cong \mathfrak{sp}(2r)\oplus \mathfrak{so}(2s)$ and $\mathfrak n_1/\mathfrak n\cong V(\omega_1)$;
partition $(3,2^{2r},1^{2s})$.

\smallbreak
\noindent{\rm -}\enspace
Type $\mathsf D_{2r+2}$, $r\geq1$: $(10\dots 011)$;
$\mathfrak k\cong \mathfrak{sp}(2r)$ and $\mathfrak n_1/\mathfrak n\cong V(\omega_1)$; partition $(3,2^{2r},1)$.

\smallbreak
\noindent{\rm -}
\enspace
Type $\mathsf D_{2r+s+2}$, $r,s\geq1$, $(10\dots010\dots0)$ with $\alpha_{2r+1}(h)=1$:
$\mathfrak k\cong \mathfrak{sp}(2r)\oplus \mathfrak{so}(2s+1)$ and
$\mathfrak n_1/\mathfrak n\cong V(\omega_1)$; partition $(3,2^{2r},1^{2s+1})$.

\smallbreak
\noindent{\rm -}\enspace
Type $\mathsf E_6$: $(000100)$; $\mathfrak k\cong \mathfrak{sl}(3)\oplus\mathfrak{sl}(2)$ and $\mathfrak n_1/\mathfrak n\cong V(\omega_1')$.

\smallbreak
\noindent{\rm -}\enspace
Type $\mathsf E_7$: $(0010000)$; $\mathfrak k\cong \mathfrak{sl}(2)\oplus\mathfrak{sp}(6)$ and $\mathfrak n_1/\mathfrak n\cong V(\omega_1)$.

\smallbreak
\noindent{\rm -}\enspace
Type $\mathsf E_7$: $(0100001)$; $\mathfrak k\cong \mathfrak{sp}(6)$ and $\mathfrak n_1/\mathfrak n\cong V(\omega_1)$.

\smallbreak
\noindent{\rm -}\enspace
Type $\mathsf E_8$: $(00000010)$; $\mathfrak k\cong \mathfrak{f}_4\oplus\mathfrak{sl}(2)$ and $\mathfrak n_1/\mathfrak n\cong V(\omega_1')$.

\smallbreak
\noindent{\rm -}\enspace
Type $\mathsf E_8$: $(01000000)$; $\mathfrak k\cong \mathfrak{sp}(8)$ and $\mathfrak n_1/\mathfrak n\cong V(\omega_1)$.

\smallbreak
\noindent{\rm -}\enspace
Type $\mathsf F_4$: $(0100)$, $\mathfrak k\cong \mathfrak{sl}(2)\oplus\mathfrak{so}(3)$ and $\mathfrak n_1/\mathfrak n\cong V(\omega_1)$.

\smallbreak
\noindent{\rm -}\enspace
Type $\mathsf G_2$: $(10)$; $\mathfrak k\cong \mathfrak{sl}(2)$ and $\mathfrak n_1/\mathfrak n\cong V(\omega_1)$.

\section{Luna's classification of model spaces}
\label{model spaces}
We refer to \cite{Lu07} for the following, and restrict ourselves to the case of $G$ simply connected.

A homogeneous space $G/H$ is model for a given semisimple group $G=G_1\times\dots\times G_r$ (with $G_i$ simple for $i=1,\dots,r$)
if and only if $H=H_1\times\dots\times H_r$, with $H_i\subset G_i$, and $G_i/H_i$ is model for $i=1,\dots,r$.

In case of a simple group $G$, the model wonderful variety is obtained as 
the wonderful compactification of $G/H$ with $H$ one of the following subgroups.
\smallbreak
\noindent
-\enspace Type $\mathsf A_n$ with $n$ even: $H=Sp(n)\times GL_1$.
\smallbreak
\noindent
-\enspace Type  $\mathsf A_n$ with $n$ odd: $H$ is the parabolic subgroup of semisimple type $\mathsf C_{(n-1)/2}$ of the symmetric subgroup A~II.
\smallbreak
\noindent
-\enspace Type $\mathsf B_n$ with $n$ even: consider the parabolic of
semisimple type $\mathsf A_{n-1}$ of the symmetric subgroup B~II,
$H$ is included there with the same radical and with semisimple type $\mathsf C_{n/2}$. 
\smallbreak
\noindent
-\enspace Type $\mathsf B_n$ with $n$ odd: $H$ equals the normaliser of the stabiliser of the nilpotent element in the Lie algebra of $G$ with characteristic $(10\dots01)$.
\smallbreak
\noindent
-\enspace Type $\mathsf C_n$ with $n$ even: $H$ is the parabolic subgroup of semisimple type $\mathsf C_{(n/2)-1}\times \mathsf C_{n/2}$ of the symmetric subgroup C~II with $q=2$.
\smallbreak
\noindent
-\enspace Type $\mathsf C_n$ with  $n$ odd: $H$ is the parabolic of semisimple type $\mathsf C_{(n-1)/2}\times \mathsf C_{(n-1)/2}$ of the symmetric subgroup C~II with $q=3$.
\smallbreak
\noindent
-\enspace Type $\mathsf D_n$ with $n$ even: $H$ is the normaliser of the stabiliser of the nilpotent element in the Lie algebra with characteristic $(10\dots011)$. 
\smallbreak
\noindent
-\enspace Type $\mathsf D_n$ with $n$ odd: consider the parabolic subgroup of semisimple type $\mathsf A_{n-2}$ of the symmetric subgroup D~II, 
$H$ is included there with the same radical and with semisimple type $\mathsf C_{(n-1)/2}$.
\smallbreak
\noindent
-\enspace Type $\mathsf E_6$: $H$ is the parabolic subgroup of
semisimple type $\mathsf C_3$ of the symmetric subgroup E~IV.

\smallbreak
\noindent
-\enspace Type $\mathsf E_7$: $H$ is the normaliser of the stabiliser of the nilpotent element in the Lie algebra with characteristic $(010\dots 01)$.
\smallbreak
\noindent
-\enspace Type $\mathsf E_8$: $H$ is the normaliser of the stabiliser of the nilpotent element in the Lie algebra with characteristic $(010\dots 0)$.

\smallbreak
\noindent
-\enspace Type $\mathsf F_4$: $H$ is the parabolic subgroup of semisimple type $\mathsf A_1\times\mathsf B_2$ of the symmetric subgroup F~I.
\smallbreak
\noindent
-\enspace Type $\mathsf G_2$: $H$ is the normaliser of the stabiliser of the nilpotent element in the Lie algebra with characteristic $(10)$.

\bibliographystyle{plain}
\bibliography{2009}

\end{document}